\newtheorem{theorem}{Theorem}[section]
\newtheorem{lemma}[theorem]{Lemma}
\newtheorem{prop}[theorem]{Proposition}
\theoremstyle{definition}
\newtheorem{example}[theorem]{Example}
\newtheorem{remark}[theorem]{Remark}
\newcommand{\Z}{\mathbb{Z}}
\newcommand{\Q}{\mathbb{Q}}
\newcommand{\C}{\mathbb{C}}
\renewcommand{\L}{\mathbb{L}}
\newcommand{\PP}{\mathbb{P}}
\renewcommand{\k}{\Bbbk}
\newcommand{\GG}{\mathbb{G}}
\DeclareMathAlphabet{\pazocal}{OMS}{zplm}{m}{n}
\newcommand{\A}{{\pazocal{A}}}
\newcommand{\cC}{{\pazocal{C}}}
\newcommand{\OO}{{\pazocal O}}
\newcommand{\RR}{{\mathcal R}}
\newcommand{\VV}{{\mathcal V}}
\newcommand{\F}{{\mathcal{F}}}
\newcommand{\cE}{{\mathcal{E}}}
\newcommand{\G}{\Gamma}
\newcommand{\sv}{\sf{V}}
\newcommand{\se}{\sf{E}}
\newcommand{\g}{{\mathfrak{g}}}
\newcommand{\h}{{\mathfrak{h}}}
\newcommand{\gl}{{\mathfrak{gl}}}
\renewcommand{\sl}{{\mathfrak{sl}}}
\newcommand{\sol}{{\mathfrak{sol}}}
\newcommand{\m}{{\mathfrak{m}}}
\DeclareMathOperator{\rank}{rank}
\DeclareMathOperator{\im}{im}
\DeclareMathOperator{\id}{id}
\DeclareMathOperator{\GL}{GL}
\DeclareMathOperator{\SL}{SL}
\DeclareMathOperator{\PSL}{PSL}
\DeclareMathOperator{\Hom}{{Hom}}
\DeclareMathOperator{\spn}{span}
\DeclareMathOperator{\proj}{pr}
\DeclareMathOperator{\OS}{OS}
\DeclareMathOperator{\Lie}{Lie}
\newcommand{\surj}{\twoheadrightarrow}
\newcommand{\inj}{\hookrightarrow}
\def\dot{\mathchar"013A}
\newcommand{\hdot}{{\raise1pt\hbox to0.35em{\Huge $\dot$}}}
\newcommand{\cdga}{\ensuremath{\mathsf{cdga}}}
\newcommand{\lcs}{\ensuremath{\mathsf{lcs}}}
\definecolor{dkgreen}{RGB}{0,100,0}
\definecolor{dkbrown}{RGB}{139,69,19}
\begin{document}
\date{February 20, 2017}

\title[Partial configuration spaces]{%
On the geometry and topology of partial configuration spaces of Riemann surfaces}

\author[B.~Berceanu]{Barbu Berceanu}
\address{Simion Stoilow Institute of Mathematics,
P.O. Box 1-764, RO-014700 Bucharest, Romania}
\email{Barbu.Berceanu@imar.ro}

\author[A.D.~M\u acinic]{Daniela Anca~M\u acinic}
\address{Simion Stoilow Institute of Mathematics,
P.O. Box 1-764, RO-014700 Bucharest, Romania}
\email{Anca.Macinic@imar.ro}

\author[\c S.~Papadima]{\c Stefan Papadima$^1$}
\address{Simion Stoilow Institute of Mathematics,
P.O. Box 1-764,
RO-014700 Bucharest, Romania}
\email{Stefan.Papadima@imar.ro}
\thanks{$^1$Partially supported by  PN-II-ID-PCE-2011-3-0288, grant 132/05.10.2011}

\author[C.R.~Popescu]{Clement Radu Popescu$^2$}
\address{Simion Stoilow Institute of Mathematics, 
P.O. Box 1-764, RO-014700 Bucharest, Romania}
\email{Radu.Popescu@imar.ro}
\thanks{$^2$Supported by a grant of the Romanian
National Authority for Scientific Research, CNCS-UEFISCDI,
project number PN-II-RU-TE-2012-3-0492}

\subjclass[2010]{Primary
55N25, 55R80;
Secondary
14F35, 20F38.
}

\keywords{Partial configuration space, smooth projective curve, Gysin model,
admissible maps onto curves, representation variety, cohomology jump loci,
Malcev completion.}

\begin{abstract}
We examine complements (inside products of a smooth projective complex curve
of arbitrary genus) of unions of diagonals indexed by the edges of an arbitrary simple  
graph. We use Orlik--Solomon models associated to these quasi-projective manifolds to compute
pairs of analytic germs at the origin, both for rank 1 and 2 representation varieties of
their fundamental groups, and for degree 1 topological Green--Lazarsfeld loci. As a
corollary, we describe all regular surjections with connected generic fiber, defined on
the above complements onto smooth complex curves of negative Euler characteristic. 
We show that the nontrivial part at the origin, for both rank 2 representation varieties 
and their degree 1 jump loci, comes from curves of general type, via the above regular maps.
We compute explicit finite presentations for the Malcev Lie algebras of the fundamental groups, 
and we analyze their formality properties. 
\end{abstract}

\maketitle
\setcounter{tocdepth}{1}
\tableofcontents

\section{Introduction and statement of results}
\label{sec:intro}

Let $\G$ be a finite simple graph with cardinality $n$ vertex set $\sv$ and edge set $\se$.
The {\em partial configuration space} of type $\G$ on a space $\Sigma$ is
\begin{equation}
\label{eq:defconf}
F(\Sigma,\G)=\{z\in\Sigma^{\sv}\,\mid\,z_i\ne z_j,\quad\text{for all}\quad ij\in\se\}.
\end{equation}
When $\G=K_n$, the complete graph with $n$ vertices, $F(\Sigma, \G)$ is the classical ordered
configuration space of $n$ distinct points in $\Sigma$. In this note, we analyze the interplay
between geometry and topology, when $\Sigma= \Sigma_g$ is a compact genus $g$
Riemann surface, with partial configuration space denoted $F(g, \G)$, with special emphasis
on fundamental groups. The {\em partial pure braid groups} of type $\G$, in genus $g$,
$P(g, \G)=\pi_1(F(g, \G))$, are natural generalizations of classical pure braid groups,
which correspond to the case when $\G=K_n$ and $\Sigma = \C$. When the graph is not complete,
the classical approach to pure braid groups based on Fadell--Neuwirth fibrations does not work in full
generality. Nevertheless, we are able in this note to compute rather delicate invariants of 
arbitrary partial pure braid groups, using techniques developed in \cite{DP-ccm, MPPS}.

Viewing $\Sigma_g$ as a smooth genus $g$ complex projective curve, $F(g, \G)$
acquires the structure of an irreducible, smooth, quasi-projective complex variety
(for short, a {\em quasi-projective manifold}). For such a quasi-projective manifold $M$,
important geometric information is provided by maps onto manifolds of smaller dimension.
Particularly interesting are the {\em admissible maps} in the sense of Arapura \cite{A}, i.e.,
the regular surjections onto quasi-projective curves, $f: M\to S$, having connected generic
fiber. The admissible map $f$ is called of general type if $\chi (S)<0$. We know from 
\cite{A} that the set of admissible maps of general type on $M$, modulo reparametrization 
at the target, denoted $\cE (M)$, is finite and is intimately related to the so-called 
{\em cohomology jump loci} of $\pi:=\pi_1(M)$.

When $M=F(g, \G)$, it is relatively easy to construct certain admissible maps of general 
type on $M$, associated to complete graphs  embedded in $\G$, $f: K_m \inj \G$; see 
Section \ref{sec:pencils}. For $g\ge 2$, the relevant $m$ equals 1 and 
$f_i:  F(g, \G) \to \Sigma_g$ is induced by the projection specified by the corresponding 
vertex $i\in \sv$. For $g=1$, the relevant $m$ is 2 and
$f_{ij}:  F(1, \G) \to \Sigma_1 \setminus \{ 0\}$ is given by the projection
corresponding to $ij\in \se$, followed by the difference map on the elliptic curve 
$\Sigma_1$. For $g=0$, the relevant $m$ equals 4 and 
$f_{ijkl}:  F(0, \G) \to \PP^1 \setminus \{ 0, 1, \infty\}$ 
is  the composition of the cross-ratio with the projection associated to the vertex set of 
the embedded $K_4$. Our first main result, proved in Section \ref{sec:pencils},
establishes that there are no other admissible maps of general type on $M=F(g, \G)$.

\begin{theorem}
\label{thm:main1}
A complete set of representatives for $\cE (F(g, \G))$ is given by the admissible maps 
of general type described above.
\end{theorem}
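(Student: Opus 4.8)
The plan is to reduce the classification to a resonance computation on a finite-dimensional algebraic model, via Arapura's dictionary. By \cite{A}, the admissible maps of general type on $M=F(g,\G)$, taken up to reparametrization, are in bijection with the positive-dimensional irreducible components passing through the identity of the first characteristic variety $\VV^1(M)$; moreover, by the machinery of \cite{DP-ccm, MPPS}, these components are detected infinitesimally by the components through the origin of the resonance variety $\RR^1(A)$ of a Gysin (Orlik--Solomon) model $A$ for $M$, with the isotropicity of the cup-product form on a component recording whether the target curve is compact and whether it is of general type. Since the maps $f_i$, $f_{ij}$, $f_{ijkl}$ are already shown to be admissible of general type, it suffices to prove the exhaustion statement: I would compute $\RR^1(A)$ for the Gysin model of the diagonal-arrangement complement $F(g,\G)\subset\Sigma_g^{\sv}$, enumerate its relevant components, and match each with one of the listed maps.

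First I would record the structure of $A$. In degree one, $A^1=H^1(\Sigma_g^{\sv})\oplus\bigoplus_{ij\in\se}\C\,e_{ij}$, where $H^1(\Sigma_g^{\sv})=\bigoplus_{i\in\sv}H^1(\Sigma_g)$ and $e_{ij}$ is the Gysin class of the diagonal $\Delta_{ij}$, subject to $d e_{ij}=[\Delta_{ij}]\in H^2(\Sigma_g^{\sv})$ and the usual Orlik--Solomon relations among the $e_{ij}$. The form governing $\RR^1(A)$ is then assembled from the intersection form of $\Sigma_g$ on each factor, the K\"unneth cross-terms between distinct factors, and the contribution of the classes $[\Delta_{ij}]$. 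Because each diagonal class has nonzero differential, a genuine resonance class must have its $e_{ij}$-coordinates constrained by a vanishing relation $\sum\lambda_{ij}[\Delta_{ij}]=0$, which is precisely where the combinatorics of $\G$ enters.

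For $g\ge2$ the symplectic form on each $H^1(\Sigma_g)$ is nondegenerate while the cross-terms between distinct factors are nonzero, so the only maximal subspaces on which the form has one-dimensional image are the single-vertex planes $H^1(\Sigma_g)_i$; these are $1$-isotropic of dimension $2g$ and correspond exactly to the compact genus-$g$ targets of the projections $f_i$. For $g=1$ the same single-factor planes appear, but the target is the elliptic curve with $\chi=0$, which is not of general type and is discarded; the surviving general-type components are the $0$-isotropic planes cut out by the difference classes attached to an edge $ij$, matching $f_{ij}$ and the punctured torus $\Sigma_1\setminus\{0\}$. For $g=0$ one has $H^1(\Sigma_0^{\sv})=0$, so $A$ is an Orlik--Solomon-type algebra generated by the $e_{ij}$ and $\RR^1(A)$ is its purely combinatorial resonance; its positive-dimensional components through the origin are the $0$-isotropic planes indexed by the four-element subsets $\{i,j,k,l\}$ spanning a $K_4$, which are exactly the cross-ratio maps $f_{ijkl}$ onto $\PP^1\setminus\{0,1,\infty\}$.

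The main obstacle is the resonance computation itself, and in particular the proof that no spurious components arise: one must show that mixing classes from several factors, or combining surface classes with diagonal classes, never produces a positive-dimensional isotropic component beyond those listed, despite the couplings $d e_{ij}=[\Delta_{ij}]$ between the two kinds of generators. I expect to control this by stratifying $\RR^1(A)$ according to the support of a resonance class on $\sv$ and $\se$, then using the nondegeneracy of the per-factor intersection form (for $g\ge2$) together with the Orlik--Solomon relations among the $[\Delta_{ij}]$ (governing the $g=0$ combinatorics) to force every component into one of the three families. Arapura's finiteness and the bijection of the first paragraph then yield that these representatives exhaust $\cE(F(g,\G))$.
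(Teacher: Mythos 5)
Your high-level reduction is exactly the paper's: by Theorem C of \cite{DP-ccm} (Theorem \ref{thm:respen} in the paper, which applies because the Orlik--Solomon model has positive weights), $\cE(F(g,\G))$ is in bijection with the positive-dimensional components of $\RR^1_1(A(g,\G))$, so the theorem is equivalent to an irreducible decomposition of that resonance variety. But that decomposition \emph{is} the proof, and your proposal stops where it starts: you state the expected answer in each genus and say you ``expect to control'' spurious components by stratifying by support and invoking nondegeneracy of the per-factor intersection form. That criterion cannot do the job, because it fails to separate $g\ge 2$ from $g=1$: the per-factor symplectic form is nondegenerate in both cases, yet the answer changes completely (vertex components for $g\ge 2$, edge components for $g=1$). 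The real mechanism, which your sketch never identifies, concerns not the resonance class $\xi$ (which has no $G$-part when $g\ge 1$, since $d$ is injective on $G$) but the $G$-component $\gamma=\sum_{ij\in\se}t_{ij}G_{ij}$ of a $d_{\xi}$-cocycle $\eta$: vanishing of the $A^2_3$-part of $d_{\xi}\eta$ forces a single nonzero $t_{ij}$ and $\xi\in\ann(G_{ij})$, and then the $A^2_2$-part yields a linear system whose inconsistency is exactly the condition $g\ne 1$. This is simultaneously why the edge planes $\im H^1(f_{ij})$ occur only in genus $1$ and why nothing beyond the K\"unneth components survives for $g\ge 2$. Relatedly, your $g=1$ statement that the single-factor planes ``appear'' and are then ``discarded'' because the elliptic target has $\chi=0$ is inconsistent with the dictionary you invoked: under Theorem \ref{thm:respen} \emph{every} positive-dimensional component corresponds to a general-type map, and indeed $H^1(\Sigma_1)_i$ is not contained in $\RR^1_1(A(1,\G))$ at all (for a $2$-dimensional symplectic $H^1$, the kernel of $\cdot\,\xi$ is $\C\cdot\xi$).

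The genus $0$ case of your sketch contains a concrete error. You claim that since $H^1(\Sigma_0^{\sv})=0$, the model is ``an Orlik--Solomon-type algebra generated by the $e_{ij}$'' and $\RR^1(A)$ is ``its purely combinatorial resonance.'' But $H^2((\PP^1)^{\sv})\ne 0$ and $dG_{ij}=\omega_i+\omega_j\ne 0$, so the differential of $A(0,\G)$ is nonzero; resonance classes must lie in $\ker d$, i.e., satisfy the balancing system $\sum_{j,\,ij\in\se}t_{ij}=0$ at every vertex (this is also why $b_1(F(0,\G))$ can vanish, e.g.\ for trees). The purely combinatorial resonance of the graphic arrangement has, by \cite[\S 3.5]{SS}, a local component $R_{ijk}$ for \emph{every} triangle $K_3\inj\G$ in addition to the $K_4$ components; your enumeration omits these, and the paper must kill them by the separate observation $R_{ijk}\cap\ker d=0$. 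Without the differential and this step, your $g=0$ decomposition is wrong: for a graph containing triangles but no $K_4$ it would predict admissible general-type pencils that do not exist.
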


A basic topological invariant of a connected finite CW-complex $M$ related to its cohomology
jump loci is the {\em Malcev Lie algebra} of the fundamental group $\pi:=\pi_1(M)$, 
cf. \cite{DP-ccm}. The Malcev Lie algebra $\m (\pi)$ of a group, over a characteristic zero
field $\k$, defined by Quillen in \cite{Q}, is a complete $\k$--Lie algebra, whose filtration
satisfies certain axioms, obtained by taking the primitives in the completion of the group 
ring $\k \pi$ with respect to the powers of the augmentation ideal.

Following Sullivan \cite{S}, we will say that a finitely generated group $\pi$ is 
{\em $1$--formal} if its Malcev Lie algebra is isomorphic to the completion with respect to 
the lower central series ($\lcs$) filtration of a quadratic Lie algebra $L$ (i.e., a Lie
algebra presented by degree $1$ generators and relations of degree $2$): 
$\m(\pi)\simeq \widehat{L}$. $1$--formal groups enjoy many pleasant topological properties, 
see for instance \cite{DPS09}. The $1$--formality of classical pure braid groups and pure
welded braid groups also has strong consequences in the corresponding theories
of finite type invariants, as shown in \cite{BP}.

In Section \ref{sec:mal}, we compute the Malcev Lie algebras of partial pure braid groups 
and determine precisely when they are  $1$--formal, as follows. Our next main result extends
computations done by Bezrukavnikov \cite{B} (for $g\geq 1$ and $\G=K_n$)
and Bibby--Hilburn \cite{BH} (for $g\geq 1$ and chordal graphs). Moreover, in our
presentations below redundant relations have been eliminated, for $g\geq 1$.

\begin{theorem}
\label{thm:main2}
The Malcev Lie algebra $\m (P(g, \G))$ is isomorphic to the $\lcs$ completion of a finitely
presented Lie algebra, $L(g, \G)$, with generators in degree $1$ and relations in degrees 
$2$ and $3$, described in Proposition \ref{prop:mal0} for $g=0$ and Proposition
\ref{prop:malpos} for $g\geq 1$. The group $P(g, \G)$ is not $1$--formal if and only if 
$g=1$ and the graph $\G$ contains a $K_3$ subgraph.
\end{theorem}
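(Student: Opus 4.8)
The plan is to pass through a finite commutative differential graded algebra (CDGA) model and then dualize. First I would take the Gysin model $A=A(g,\G)$ for $F(g,\G)$ constructed in the previous sections: a finite-dimensional CDGA, weakly equivalent to the forms on $F(g,\G)$, with $A^1$ spanned by the $2g$ vertex classes attached to each $i\in\sv$ (the generators of $H^1(\Sigma_g)$) together with one edge class $e_{ij}$ for every $ij\in\se$, and with $d e_{ij}$ equal to the pullback of the diagonal class. By the Sullivan--Quillen dictionary, recalled in \cite{Q,S} and exploited in \cite{DP-ccm}, a finite CDGA model determines a finite presentation of the Malcev Lie algebra $\m(\pi_1)$: the $\lcs$ completion of the Lie algebra $L$ whose generators are dual to $A^1$, whose degree-$2$ relations are the transpose of the multiplication $A^1\wedge A^1\to A^2$, and whose degree-$3$ relations record the remaining secondary structure measured against $A^3$ (the primary obstruction to absorbing the differential into the product). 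Feeding the genus-dependent form of $A(g,\G)$ into this procedure yields exactly the presentations of $L(g,\G)$ stated in Proposition \ref{prop:mal0} (for $g=0$) and Proposition \ref{prop:malpos} (for $g\geq 1$), and proves the first assertion; I would also verify here that for $g\geq 1$ the relations inherited from the diagonal classes force the redundancies that let us drop the superfluous relations advertised in the introduction.

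For the formality dichotomy I would invoke Sullivan's criterion \cite{S}: $P(g,\G)$ is $1$--formal precisely when $\m(P(g,\G))$ is the $\lcs$ completion of a \emph{quadratic} Lie algebra, equivalently when the degree-$3$ relations of $L(g,\G)$ can be eliminated, equivalently (via \cite{DP-ccm}) when all triple Massey products in $H^*(F(g,\G))$ vanish. The cases $g=0$ and $g\geq 2$ give $1$--formality: for $g=0$ we have $H^1(\Sigma_0)=0$, so $A(0,\G)$ is generated by the edge classes alone and is an Orlik--Solomon type algebra, which is formal; for $g\geq 2$ the cup product $\Lambda^2 H^1(\Sigma_g)\to H^2(\Sigma_g)$ has a large kernel, and I would check that the resulting wealth of quadratic relations absorbs every cubic relation of $L(g,\G)$, so that $L(g,\G)$ becomes quadratic. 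The same redundancy I expect to persist for $g=1$ as long as $\G$ contains no triangle, since the cubic relations are supported on configurations of three mutually adjacent vertices.

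The heart of the argument is therefore the remaining case $g=1$ with $K_3\inj\G$, where I would exhibit a nonzero triple Massey product. Fixing a triangle $\{i,j,k\}$, the genus-$1$ differential reads $d e_{ij}=a_i b_i+a_j b_j-a_i b_j+b_i a_j$ (and cyclically), so the products $e_{ij}e_{jk}$, $e_{jk}e_{ki}$, $e_{ki}e_{ij}$ satisfy an Arnold-type relation furnishing a defining system for a Massey product $\langle e_{ij},e_{jk},e_{ki}\rangle$; I would compute this product explicitly in $A(1,\G)$ and show that it is nonzero in $H^2(F(1,\G))$ modulo its indeterminacy. The decisive point --- and the step I expect to be the main obstacle --- is to prove that the indeterminacy of this Massey product, coming from cup products with $H^1(F(1,\G))$, does not absorb it; this is exactly the statement that the corresponding cubic relation of $L(1,\G)$ is not a consequence of the quadratic ones, and it is precisely what fails for $g\geq 2$, where the extra cohomology enlarges the indeterminacy and kills the product. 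Once the Massey product is shown to survive, the obstruction of \cite{S} gives non-$1$--formality, and combining this with the second paragraph completes the equivalence: $P(g,\G)$ is non-$1$--formal if and only if $g=1$ and $K_3\inj\G$.
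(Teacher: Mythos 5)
Your skeleton --- dualize a finite Orlik--Solomon model of $F(g,\G)$ to present $\m(P(g,\G))$, then decide when the cubic relations follow from the quadratic ones --- is the paper's skeleton, and your handling of the easy directions ($g\ge 2$, and $g=1$ triangle-free, as in Proposition \ref{prop:malformal}) is in the right spirit. But the two steps you treat as routine are exactly where the content lies. First, the ``Sullivan--Quillen dictionary'' you invoke does not exist off the shelf: the statement that for a $1$--finite $1$--model $A$ the Malcev Lie algebra is the $\lcs$ completion of the quotient of $\L(A^{1*})$ by the Lie ideal generated by $\im(d^*+\mu^*)$ is Theorem \ref{thm:malhol} of the paper, proved there via Chen--Hain iterated integrals, bar/cobar duality and Quillen's Appendix A; the only prior result of this kind (Bezrukavnikov \cite{B}) assumes the model is quadratic as a graded algebra, which $A(0,\G)$ is not. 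Moreover, your description of the resulting presentation is wrong: the holonomy Lie algebra uses only $A^{\le 2}$, so $A^3$ plays no role whatsoever; the degree-$3$ relations of $L(g,\G)$ for $g\ge 1$ arise from \emph{eliminating} the generators $C_{ij}$ dual to the weight-two classes $G_{ij}\in A^1$ via relations \eqref{eq:malpos21}, not from ``secondary structure measured against $A^3$''. (Your $g=0$ argument is also off: $A(0,\G)$ has a nontrivial differential and is not an Orlik--Solomon algebra; $1$--formality there holds because the presentation of Proposition \ref{prop:mal0} becomes quadratic after eliminating the linear relations, or by the criterion $W_1H^1=0$ of Remark \ref{rk:w1}.)

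Second, and decisively, your treatment of the case $g=1$ with $K_3\inj\G$ fails. The Massey product $\langle e_{ij},e_{jk},e_{ki}\rangle$ you propose is not defined: for $g=1$ the edge classes are not cocycles --- $dG_{ij}=\omega_i+\omega_j+\sum_s(y_i^s\otimes x_j^s-x_i^s\otimes y_j^s)$ is the diagonal class, nonzero in $H^2(\Sigma_1^{\sv})$ --- and indeed Lemma \ref{lem:prel}\eqref{pr1} gives $H^1(F(1,\G))=H^1(\Sigma_1^{\sv})$, so the inputs of your would-be product do not exist as classes in $H^1$. Even granting a corrected choice of classes, you explicitly defer the indeterminacy computation, which is the entire content of non-$1$--formality here, as ``the main obstacle''. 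The paper avoids all of this by functoriality (Proposition \ref{prop:malnot1f}): a graph inclusion $K_3\inj\G$ induces a graded Lie surjection $L(1,\G)\surj L(1,K_3)$ and a graded Lie monomorphism backwards preserving the cubic relations, so $1$--formality of $P(1,\G)$ would force $1$--formality of $P(1,K_3)$, which is known to fail by \cite[Example 10.1]{DPS09} (where the obstruction is the non-linearity of the resonance variety, not a Massey product). Without either this reduction or a completed obstruction computation, the central case of the theorem remains unproven in your proposal.
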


Now, we move to our unifying theme: the interplay between the geometry of a  quasi-projective
manifold $M$, encoded by a smooth compactification $\overline{M}$, and the embedded 
topological jump loci of $M$. We start by recalling a couple of relevant definitions and 
facts related to the topological side of this story. Fix  $q\in \Z_{>0} \cup \{ \infty \}$.
We will say that $M$ is a $q$--finite space if (up to homotopy) $M$ is a connected CW-complex
with finite $q$--skeleton, whose (finitely generated) fundamental group will be denoted by 
$\pi$. Let $\iota: \GG \to \GL (V)$ be a morphism of complex linear algebraic groups. The
associated {\em characteristic varieties} (in degree $i\ge 0$ and depth $r\ge 0$),
\begin{equation}
\label{eq:defv}
\VV^i_r(M, \iota)=\{\rho \in \Hom(\pi, \GG) \mid
\dim H^i(M, {}_{\iota\rho}V)\ge r\} \, ,
\end{equation}
are Zariski closed subvarieties (for $i\le q$) of the affine {\em representation variety} 
$\Hom(\pi, \GG)$, for which the trivial representation provides a natural basepoint, 
$1\in \Hom(\pi, \GG)$. These cohomology jump loci are called
{\em topological Green--Lazarsfeld loci} for $r=1$. They were introduced in the rank one 
case (i.e., for $\iota=\id_{\C^{\times}}$) in \cite{GL}, for a smooth projective complex
variety $M$. In the rank one case, we simplify notation to $\VV^i_r(M)$. Note that, 
in general, $\VV^1_r(M, \iota):=\VV^1_r(\pi, \iota)$ depends only on $\pi$, for all $r$.

We go on by describing the infinitesimal analogs of the above notions, following \cite{DP-ccm}.
Let $(A^{\hdot}, d)$ be a complex Commutative Differential Graded Algebra 
with positive grading (for short, a 
$\cdga$). We will say that $A^{\hdot}$ is  $q$--finite if $A^0=\C \cdot 1$ and 
$\sum_{i=1}^q \dim A^i < \infty$. Let $\theta: \g \to \gl (V)$ be a finite-dimensional
representation of a finite-dimensional complex Lie algebra. The affine variety of 
{\em flat connections}, $\F(A, \g)$, consists of the solutions in $A^1\otimes \g$ of the
Maurer--Cartan equation, has the trivial flat connection $0$ as a natural basepoint,
and is natural in both $A$ and $\g$. For $\omega \in \F(A, \g)$, there is an associated
covariant derivative, $d_{\omega}: A^{\hdot}\otimes V \to A^{\hdot +1}\otimes V$,
with $d_{\omega}^2=0$, by flatness. The {\em resonance varieties}
\begin{equation}
\label{eq:defr}
\RR^i_r(A, \theta)= \{\omega \in \F(A,\g)
\mid \dim H^i(A \otimes V, d_{\omega}) \ge  r\}
\end{equation}
are Zariski closed subvarieties (for $i\le q$). We use the simplified notation $\RR^i_r(A)$ 
in the rank one case (i.e., when $\theta=\id_{\C}$).

We say that the $\cdga$ $A^{\hdot}$ is  a $q$--model of $M$ (and omit $q$ from all terminology
when $q=\infty$) if $A^{\hdot}$ has the same Sullivan $q$--minimal model as the DeRham 
$\cdga$ $\Omega^{\hdot}(M)$, cf. \cite{S}. In particular, $H^{\hdot}(A)\simeq H^{\hdot}(M)$, 
as graded algebras, when $A$ is a model of $M$. 

The link between topological and infinitesimal objects is provided by Theorem B from 
\cite{DP-ccm}. Assume that both $A$ and $M$ are $q$--finite and $A$ is  a $q$--model of $M$.
Denote by $\theta$ the tangential representation of $\iota$. Then, for $i\le q$ and $r\ge 0$,
the embedded analytic germs at $1$, $\VV^i_r(M, \iota)_{(1)} \subseteq  \Hom(\pi, \GG)_{(1)}$,
are isomorphic to the corresponding embedded germs at $0$, 
$\RR^i_r(A, \theta)_{(0)} \subseteq \F(A,\g)_{(0)}$. Moreover, by Theorem A from \cite{DP-ccm},
if $\pi$ is a finitely generated group then the germ $\Hom(\pi, \GG)_{(1)}$ depends only on 
the Malcev Lie algebra $\m (\pi)$ and the Lie algebra of $\GG$. 

Finally, assume that $M$ is a quasi-projective manifold and $M=\overline{M} \setminus D$ is 
a smooth compactification obtained by adding at infinity a hypersurface arrangement $D$ in 
$\overline{M}$ (in the sense of Dupont \cite{D}). Then there is an associated (natural, finite)
{\em Orlik--Solomon model} $A^{\hdot}(\overline{M}, D)$ of the finite space $M$, constructed in
\cite{D}. It follows from Theorem C in \cite{DP-ccm} that
this model $A$ determines $\cE (M)$, which is in bijection with the positive-dimensional
irreducible components through the origin, for both $\RR^1_1(A)$ and $\VV^1_1(M)$. 

When $M=F(g, \G)$, we may take $\overline{M}=\Sigma_g^{\sv}$ and 
$D_{\G}=\bigcup_{ij \in \se} \Delta_{ij}$ (the union of the diagonals associated to the 
edges of the graph). We prove Theorem \ref{thm:main1} by computing the irreducible
decomposition of $\RR^1_1(A)$, for the Orlik--Solomon model $A=A(\overline{M},D_{\G})$. When $g=1$ 
and $\G=K_n$, the result follows from a more precise description of all positive-dimensional
components of $\VV^1_1(M)$, obtained by Dimca in \cite{D10}. Given a $1$--finite $1$--model $A$ of a
connected CW-space $M$, we show in Theorem \ref{thm:malhol} that the Malcev Lie algebra 
$\m (\pi_1(M))$ is isomorphic to the $\lcs$ completion of the {\em holonomy Lie algebra} of $A$, 
introduced in \cite{MPPS}. This general result is the basic tool for the proof of Theorem \ref{thm:main2},
where $M=F(g, \G)$ and $A=A(\overline{M},D_{\G})$.

$\SL_2(\C)$--representation varieties received a lot of attention, both in topology and
algebraic geometry. In order to describe their germs at $1$
for partial pure braid groups, together with the embedded germs of associated non-abelian
characteristic varieties (in degree $1$ and depth $1$),
we use their infinitesimal analogs, described above. Let  $\theta : \g \to \gl (V)$ be 
a finite-dimensional representation of $\g=\sl_2$ or $\sol_2$,
the Lie algebra of $\SL_2(\C)$ or of its standard Borel subgroup. To state our next main
result, we need two definitions from \cite{MPPS}. Denote by 
$\F^1(A,\g)\subseteq \F(A,\g)$ the flat connections of the form $\omega=\eta \otimes g$, with
$d\eta=0$ and $g\in \g$, and set 
$\Pi (A, \theta)= \{ \omega\in \F^1(A,\g)\, \mid \, \det \theta (g)=0 \}$. To have a uniform
notation, denote by $f: F(g, \G) \to S=\overline{S} \setminus F$ the admissible maps from
Theorem \ref{thm:main1}, where $\overline{S} =\Sigma_g$ and $F\subseteq \overline{S}$ 
is a finite subset (in particular, a hypersurface arrangement in $\overline{S}$). 
To avoid trivialities, we will assume in genus $0$ that $H^1(F(g, \G))\ne 0$. 
(The complete description of $H^1(F(g, \G))$ may be found in Lemma \ref{lem:prel}; what 
happens in general with the embedded topological Green--Lazarsfeld loci in degree $1$ of $M$ 
at the origin, when $b_1(M)=0$, is explained in Section \ref{sec:sl2}.)

\begin{theorem}
\label{thm:main3}
In the above setup, there is a regular extension of $f$, $\overline{f} : (\overline{M},D) 
\to (\overline{S}, F)$, for all $f\in \cE:= \cE(F(g, \G))$, where $D$ is a 
hypersurface arrangement in $\overline{M}$ with complement $F(g, \G)$, which induces $\cdga$
maps between Orlik--Solomon models, $f^*: A^{\hdot}(\overline{S}, F) \to  A^{\hdot}(\overline{M}, D)$, 
with the property that 
\begin{equation}
\label{eq:m3flat}
\F(A^{\hdot}(\overline{M}, D), \g) = \F^1(A^{\hdot}(\overline{M}, D), \g) \cup \bigcup_{f\in \cE} f^*\F(A^{\hdot}(\overline{S}, F), \g)\,
\quad \text{for $\g=\sl_2$ or $\sol_2$}\, , 
\end{equation}
and 
\begin{equation}
\label{eq:m3res}
\RR^1_1(A^{\hdot}(\overline{M}, D), \theta) = \Pi (A^{\hdot}(\overline{M}, D), \theta) \cup
\bigcup_{f\in \cE} f^*\F(A^{\hdot}(\overline{S}, F), \g)\, ,
\end{equation}
for any finite-dimensional representation $\theta : \g \to \gl (V)$.
\end{theorem}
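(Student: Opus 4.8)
The plan is to reduce both identities to an analysis of the two pieces of a flat connection $\omega \in A^1 \otimes \g$ relative to the Orlik--Solomon model $A = A(\overline{M}, D_\G)$, exploiting the explicit structure of $A^1$ coming from the Gysin-type presentation. First I would record the decomposition $A^1 = H^1(\overline{M}) \oplus \bigoplus_{ij\in\se} \C \cdot e_{ij}$, where the $e_{ij}$ are the logarithmic generators dual to the diagonals $\Delta_{ij}$, together with the induced description of the product $A^1 \wedge A^1 \to A^2$ and the differential $d$ on $A$. Writing a general connection as $\omega = \alpha + \sum_{ij} e_{ij}\otimes g_{ij}$ with $\alpha \in H^1(\overline{M})\otimes\g$ and $g_{ij}\in\g$, I would expand the Maurer--Cartan equation $d\omega + \tfrac12[\omega,\omega] = 0$ into its components in $A^2\otimes\g$. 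The key structural input is that the quadratic relations in $A$ are exactly those dictated by the graph $\G$ and the genus $g$ (the Arnold/infinitesimal relations for the diagonals, coupled to $H^1(\overline{M})$), so the Maurer--Cartan equation becomes a finite system of bracket relations among the $g_{ij}$ and the components of $\alpha$.

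Next I would show the inclusion ``$\supseteq$'' in \eqref{eq:m3flat}, which is the easy direction: $\F^1(A,\g)$ consists of connections supported on a single closed one-form tensored with a fixed $g\in\g$, and for such $\omega$ the bracket term $[\omega,\omega]$ vanishes because $[g,g]=0$ and the relevant product $\eta\wedge\eta = 0$, so these are automatically flat; and for each admissible $f\in\cE$ the pullback $f^*$ is a \cdga\ morphism, hence carries $\F(A(\overline{S},F),\g)$ into $\F(A,\g)$. For the reverse inclusion, I would take an arbitrary $\omega\in\F(A,\g)$ and use the component equations from the previous step to prove a dichotomy: either all the brackets $[g_{ij},g_{kl}]$ forced to vanish make $\omega$ lie in $\F^1$, or some nonvanishing bracket forces the support of $\omega$ to concentrate along a complete subgraph $K_m\subseteq\G$ of the relevant size ($m=1,2,4$ according to $g\geq 2, g=1, g=0$), which is precisely the combinatorial data indexing the maps in $\cE$ by Theorem \ref{thm:main1}. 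Identifying that concentrated connection as a pullback $f^*\omega'$ then finishes \eqref{eq:m3flat}; here I would lean on the fact that $f^*: A(\overline{S},F)\to A$ realizes the relevant sub-\cdga, so flatness of $\omega$ descends to flatness of $\omega'$ on $\overline{S}$.

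Having established \eqref{eq:m3flat}, I would deduce \eqref{eq:m3res} by intersecting with the resonance condition $\dim H^1(A\otimes V, d_\omega)\geq 1$. On the $\F^1$ part this condition selects exactly the connections with $\det\theta(g)=0$, giving $\Pi(A,\theta)$ by the definition from \cite{MPPS}; the computation here is a direct cohomology calculation for the rank-one-type covariant derivative $d_\omega$ twisted by a single flat form. On each pullback piece $f^*\F(A(\overline{S},F),\g)$, I would argue that every such connection already lies in $\RR^1_1$: since $S$ has negative Euler characteristic and $f$ has connected generic fiber, the pullback $f^*: H^1(A(\overline{S},F))\to H^1(A)$ is injective and the twisted cohomology $H^1(A(\overline{S},F)\otimes V, d_{\omega'})$ is nonzero for all $\omega'\in\F$, which pulls back to nonzero classes; this is where the propagation of cohomology for curves is used. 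The main obstacle will be this last resonance bookkeeping on the curve factors together with the descent argument in the reverse inclusion of \eqref{eq:m3flat}: one must verify that a flat connection whose brackets do not all vanish is genuinely supported on a single admissible pencil and does not spread across several overlapping complete subgraphs, which requires carefully using the Arnold relations to show the incompatibility of simultaneous nonabelian support on distinct $K_m$'s.
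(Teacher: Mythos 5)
Your outline reproduces the broad shape of the paper's argument in positive genus (easy inclusion by naturality of Orlik--Solomon models, then a ``concentration'' dichotomy for the hard inclusion of \eqref{eq:m3flat}, then deducing \eqref{eq:m3res}), but the two steps that carry all the weight are missing their key ideas. First, the dichotomy itself: you assert that a flat connection with some nonvanishing bracket must be supported on a single complete subgraph, and you yourself flag this as ``the main obstacle,'' but you give no mechanism for it. The paper does not expand the Maurer--Cartan equation directly; it passes to the holonomy Lie algebra via Theorem \ref{thm:malhol} and \cite[Proposition 4.5]{MPPS}, replacing $\F(A,\g)$ by $\Hom_{\Lie}(\h(A),\g)$, and then runs the concentration argument (Propositions \ref{prop:flat1} and \ref{prop:flat2}) on the explicit presentations \eqref{eq:malpos21}--\eqref{eq:malpos26}, using at every turn the rank-two--specific fact that $[A,B]=0$ in $\sl_2$ or $\sol_2$ if and only if $\rank\{A,B\}\le 1$. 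That fact is what converts vanishing brackets into linear-dependence constraints on the components $g_{ij}$ and forces the support onto one vertex (for $g\ge 2$) or one edge (for $g=1$); without it the dichotomy is simply false for general $\g$, so no amount of bookkeeping with ``Arnold relations'' alone can close this step. Moreover, your uniform treatment of genus $0$ does not match the paper: there the differential is nonzero on every generator $G_{ij}$ and the paper instead proves equality in \eqref{eq:flatpen} by a completely different route (Theorem \ref{thm:flat0}: $W_1H^1(M)=0$, positive weights, reduction of $\F(A,\g)$ to $\F((H^{\hdot}(M),d=0),\g)$, then \cite[Corollary 7.2]{MPPS}); it is not clear your direct expansion would terminate there.

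Second, your derivation of \eqref{eq:m3res} contains an incorrect claim: on the $\F^1$ part the resonance condition does \emph{not} select exactly the connections with $\det\theta(g)=0$. For instance, with $g\ge 2$, take $\eta\in\im H^1(f_i)$ nonzero and $g\in\g$ with $\det\theta(g)\ne 0$; then $\eta\otimes g$ lies in $\RR^1_1(A,\theta)\cap\F^1(A,\g)$ but not in $\Pi(A,\theta)$. The decomposition survives only because such elements also lie in $f_i^*\F(A^{\hdot}(\overline{S},F),\g)$, and proving that is exactly the content of the paper's Proposition \ref{prop:fltores}: for $\omega=\eta\otimes g$ resonant with $\det\theta(g)\ne 0$, the eigenvalue theorem \cite[Theorem 1.2]{MPPS} produces $\lambda\ne 0$ with $\lambda\eta\in\RR^1_1(A)$, and then the rank-one decomposition $\RR^1_1(A)=\bigcup_f \im H^1(f)$ (Propositions \ref{prop:res2}--\ref{prop:res0}) together with injectivity of $f^*$ pushes $\omega$ into a pullback piece. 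So the rank-one resonance computation of Section \ref{sec:pencils} is an essential \emph{input} to \eqref{eq:m3res}, not merely an indexing device as in your sketch, and the inclusion $\Pi(A,\theta)\subseteq\RR^1_1(A,\theta)$ likewise needs $H^1(A)\ne 0$ and \cite[Corollary 3.8]{MPPS} rather than a ``direct cohomology calculation.'' As it stands, the proposal is an outline whose two load-bearing steps are either unproved or proved from a false intermediate claim.
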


This shows that, for partial configuration spaces on smooth projective curves, the 
non-trivial part at the origin, for both $\SL_2(\C)$--representation varieties and their 
degree one topological Green--Lazarsfeld loci, "comes from curves of general type, via
admissible maps". (The contribution of these curves, $f^*\F(A^{\hdot}(\overline{S}, F), \g)$,
was computed in \cite[Lemma 7.3]{MPPS}.) A similar pattern is exhibited by quasi-projective
manifolds with $1$--formal fundamental group, cf. \cite[Corollary 7.2]{MPPS}.
The geometric formulae from Theorem \ref{thm:main3} seem to be quite satisfactory, since in
genus $1$, where non-$1$-formal examples appear (cf. Theorem \ref{thm:main2}), the purely
algebraic description from \cite[Proposition 5.3]{MPPS} (obtained by assuming formality) may
not hold, as we explain in Example \ref{ex:notalg}.


\section{Admissible maps and rank one resonance}
\label{sec:pencils}

We devote this section to the proof of Theorem \ref{thm:main1}. Our strategy is to compute 
the irreducible decomposition of $\RR^1_1(A(g, \G))$, where $A^{\hdot}(g, \G)$ is the 
Orlik--Solomon model of $M:=F(g, \G)=\overline{M}\setminus D_{\G}$ from \cite{D},
$\overline{M}= \Sigma_g^{\sv}$ and $D_{\G}= \bigcup_{ij \in \se} \Delta_{ij}$. As a byproduct,
we obtain a complete description of the irreducible components through $1$, for the rank one
characteristic variety $\VV^1_1(P(g, \G))$, as explained in the Introduction.

The Dupont models $A^{\hdot}(\overline{M}, D)$ are defined over $\Q$ and generalize
Morgan's construction of {\em Gysin models} from \cite{M}, which corresponds to the case of a simple normal 
crossing divisor $D$. Among other things, the models of Dupont are natural with respect
to regular morphisms $\overline{f}: (\overline{M}, D) \to (\overline{M'}, D')$, in the
following sense. When the regular map $\overline{f}: \overline{M} \to \overline{M'}$ has the
property that $\overline{f}^{-1}(D') \subseteq D$, it induces a regular map
$f: \overline{M}\setminus D \to \overline{M'}\setminus D'$, and a $\cdga$ map 
$f^*: A^{\hdot}(\overline{M'}, D') \to A^{\hdot}(\overline{M}, D)$. Plainly, a graph inclusion
$f:\G' \inj \G$ (i.e., $f$ embeds $\sv'$ into $\sv$ and $\se'$ into $\se$) induces by
projection a regular morphism 
$\overline{f}: (\Sigma_g^{\sv}, D_{\G}) \to (\Sigma_g^{\sv'}, D_{\G'})$, and   a $\cdga$ 
map $f^*: A^{\hdot}(g, \G') \to A^{\hdot}(g, \G)$. Moreover, 
$A^{\hdot}(g, \G)= A^{\hdot}_{\hdot}(g, \G)$ is a bigraded $\cdga$ with {\em positive weights}, 
in the sense of Definition 5.1 from \cite{DP-ccm}. The lower degree, called {\em weight}, is
preserved by $\cdga$ maps induced by graph inclusions. A simple example:
$A^{\hdot}(g,\emptyset)=(H^{\hdot}(\Sigma_g^{\times n}),d=0)$.

Now, we recall from \cite{DP-ccm, MPPS} a couple of facts about rank $1$ resonance, needed 
in the sequel. Let $A^{\hdot}$ be a finite $\cdga$. For $\xi\in A^1\otimes \C=A^1$, the
Maurer--Cartan equation reduces to $d\xi=0$. Thus, $\F (A, \C)$ is naturally identified with
$H^1(A)\subseteq A^1$, since $A^0=\C \cdot 1$. By definition,
$\RR^1_1(A)= \{ \xi\in H^1(A) \mid H^1(A, d_{\xi})\ne 0\}$, where 
$d_{\xi}\eta= d\eta + \xi \eta$, for $\eta \in A^1$. Clearly, $\RR^1_1(A)$ 
depends only on the truncated $\cdga$ $A^{\le 2}:= A^{\hdot}/ \oplus_{i>2} A^i$, and 
$\RR^1_1(A)=\emptyset$ when $H^1(A)=0$. We will use the following consequence of Theorem C 
from \cite{DP-ccm}, applied to $M=F(g, \G)$ and $A=A(g, \G)$.

\begin{theorem}
\label{thm:respen}
For a quasi-projective manifold $M$ with finite model $A$ having positive weights, $\cE (M)$ 
is in bijection with the positive-dimensional (linear) irreducible components of 
$\RR^1_1(A)$, via the correspondence $f\in \cE (M) \mapsto \im H^1(f) \subseteq H^1(A)$.
\end{theorem}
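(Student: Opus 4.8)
The plan is to assemble the statement from Theorems A--C of \cite{DP-ccm} together with Arapura's structural results \cite{A}, using the positive-weights hypothesis to force linearity.

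First I would recall from \cite{A} the geometric source of the positive-dimensional characteristic subvarieties: each admissible map of general type $f\colon M\to S$ produces a distinguished positive-dimensional subtorus $f^*\bigl(\Hom(\pi_1(S),\C^{\times})\bigr)\subseteq \Hom(\pi,\C^{\times})$ whose identity component is an irreducible component of $\VV^1_1(M)$ through the basepoint $1$, and Arapura's theorem asserts that every positive-dimensional component of $\VV^1_1(M)$ through $1$ arises in this way. This already yields a bijection between $\cE(M)$ and the set of positive-dimensional components of $\VV^1_1(M)$ passing through $1$. I would then note that the tangent space at $1$ of the subtorus attached to $f$ is exactly $\im H^1(f)\subseteq H^1(M)$.

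Next I would pass to the infinitesimal side. Since $M$ is finite and $A$ is a finite model of $M$ (and, as $\RR^1_1$ depends only on $A^{\le 2}$, a $1$-model suffices), Theorem B of \cite{DP-ccm} gives an isomorphism of embedded analytic germs $\VV^1_1(M)_{(1)}\cong\RR^1_1(A)_{(0)}$ inside $\Hom(\pi,\C^{\times})_{(1)}\cong H^1(A)_{(0)}$. This germ isomorphism carries each germ at $1$ of a component of $\VV^1_1(M)$ to a germ at $0$ of a component of $\RR^1_1(A)$, preserving dimension; in particular it matches the tangent cone of the Arapura subtorus attached to $f$ with that of the corresponding resonance component.

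Finally I would invoke the positive-weights hypothesis. By Definition 5.1 of \cite{DP-ccm}, positive weights equip $H^1(A)$ with a $\C^{\times}$-action of strictly positive weights under which $\RR^1_1(A)$ is invariant; hence $\RR^1_1(A)$ is a cone, and by the consequence of Theorem C recorded in the Introduction its positive-dimensional components through the origin are linear subspaces, each coinciding with its own tangent cone at $0$. Since a cone is recovered from its germ at the vertex, the germ isomorphism upgrades to a genuine bijection between positive-dimensional components of $\VV^1_1(M)$ through $1$ and positive-dimensional linear components of $\RR^1_1(A)$; under it the subtorus attached to $f$, with tangent space $\im H^1(f)$, matches the linear component equal to $\im H^1(f)$. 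Composing with the bijection of the first step gives the asserted correspondence $f\mapsto \im H^1(f)$. The main obstacle is precisely the passage from conicity to linearity of the resonance components, together with the verification that the tangent space of each Arapura subtorus is correctly identified with $\im H^1(f)$ under the germ comparison; this is the heart of Theorem C of \cite{DP-ccm}, so the remaining work is bookkeeping rather than new computation.
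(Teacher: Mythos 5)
Your proposal is correct and takes essentially the same route as the paper: the paper offers no independent argument, stating Theorem \ref{thm:respen} verbatim as a consequence of Theorem C of \cite{DP-ccm}, and your assembly of Arapura's structure theorem for $\VV^1_1(M)$, the germ comparison of Theorem B, and the positive-weights conicity is exactly the machinery underlying that citation. The only caveat is that your linearity step itself appeals to Theorem C, so your write-up is an unpacking of that reference rather than an independent proof --- which is consistent with how the paper uses it.
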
 

The maps from Theorem \ref{thm:main1} are constructed in the following way. For a subset 
$\sv'\subseteq \sv$, we denote by $\proj_{\sv'}: F(g, \G) \to F(g, \G')$ the regular map
induced by the canonical projection, $\proj_{\sv'}: \Sigma_g^{\sv} \to \Sigma_g^{\sv'}$,
where $\G'$ is the full subgraph of $\G$ with vertex set $\sv'$. For an elliptic curve 
$\Sigma_1$, let $\overline{\delta} : (\Sigma_1^2, \Delta_{12}) \to (\Sigma_1, \{ 0\})$ be 
the regular morphism defined by $\overline{\delta} (z_1, z_2)= z_1-z_2$. In genus $0$,
the regular map $\rho : F(0, K_4) \to \PP^1 \setminus \{ 0,1,\infty \}$ is defined by 
$\rho (z_1, z_2, z_3, z_4)=\alpha (z_4)$, where $\alpha \in \PSL_2$ is the 
unique automorphism of $\PP^1$ sending $z_1, z_2, z_3$ to $0,1,\infty $ respectively. For 
$g\ge 2$ and $f: K_1 \inj \G$, corresponding to 
$i\in \sv$, set $f_i:=\proj_i:F(g,\G)\to\Sigma_g$. For $g=1$ and $f:K_2\inj\G$, corresponding 
to $ij\in\se$, set $f_{ij}:=\delta \circ\proj_{ij}:F(1,\G)\to\Sigma_1\setminus\{0\}$. 
For $g=0$ and $f: K_4 \inj \G$, with vertex subset $\{ ijkl \} \subseteq \sv$, set 
$f_{ijkl}:= \rho \circ \proj_{ijkl} : F(0, \G) \to \PP^1 \setminus \{ 0,1,\infty \}$. 

\begin{lemma}
\label{lem:somepen}
The above maps, $f_i, f_{ij}$ and $f_{ijkl}$, are admissible, of general type.
\end{lemma}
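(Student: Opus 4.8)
The plan is to verify the two defining properties of an admissible map of general type for each of the three families $f_i$, $f_{ij}$, $f_{ijkl}$: namely, that each is a regular surjection with connected generic fiber onto a quasi-projective curve $S$, and that $\chi(S)<0$. The first observation is that each map factors as a projection $\proj_{\sv'}$ onto a smaller partial configuration space, followed by one of the explicit ``model'' maps $\proj_i$ (for $g\ge 2$), $\delta$ (for $g=1$), or $\rho$ (for $g=0$). Since compositions of regular maps are regular, regularity is immediate; the real content is surjectivity, connectedness of the generic fiber, and the sign of the Euler characteristic of the target.

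First I would identify the target curve $S$ and compute its Euler characteristic in each case, which is the quickest step. For $g\ge 2$, $S=\Sigma_g$ with $\chi(\Sigma_g)=2-2g\le -2<0$. For $g=1$, $S=\Sigma_1\setminus\{0\}$, a once-punctured torus, with $\chi=0-1=-1<0$. For $g=0$, $S=\PP^1\setminus\{0,1,\infty\}$ with $\chi=2-3=-1<0$. So the ``general type'' condition $\chi(S)<0$ holds uniformly, which is why these particular values of $m$ (namely $1,2,4$) are the relevant ones: they are the smallest complete graphs whose associated target curve has negative Euler characteristic in the respective genus.

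Next I would address surjectivity and connected generic fiber, handling the three genera in parallel. For surjectivity, the key point is that the fibers of these maps are themselves (up to the complement of finitely many diagonals) products of $\Sigma_g$ or partial configuration spaces, which are nonempty over a Zariski-dense open set of the target; concretely, one fixes the relevant coordinates at a generic point of $S$ and observes that the remaining coordinates can be chosen freely away from finitely many diagonal conditions, so the fiber is nonempty and in fact irreducible. For $g\ge 2$, the generic fiber of $f_i=\proj_i$ over a point $p\in\Sigma_g$ is $F(g,\G\setminus i)$ with the extra constraint $z_j\neq p$ for edges $ij$, which is a partial configuration space on a finitely punctured surface and hence connected. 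For $g=1$, the generic fiber of $\delta\circ\proj_{ij}$ is fibered over a fiber of $\delta$ (a diagonal copy of $\Sigma_1$, so connected and one-dimensional) with the remaining coordinates ranging over a partial configuration space; one checks the total space of this fibration is connected. For $g=0$, one uses that the cross-ratio $\rho$ has connected generic fibers on $F(0,K_4)$ and that $\proj_{ijkl}$ is a projection with connected (partial configuration space) fibers.

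The main obstacle I expect is the verification of connectedness of the generic fiber, especially in genus $0$ and genus $1$ where the model maps $\rho$ and $\delta$ are not mere projections. In those cases the cleanest route is to realize each $f$ as a composite of maps each of which is already known to be a fibration (or a restriction of one) with connected fibers over a Zariski-open subset of the base, and then to invoke the fact that a composite of surjections with connected generic fibers again has connected generic fiber over a suitable dense open set of the target. For genus $1$ I would exploit the translation action of the elliptic curve to trivialize $\delta$ generically, reducing its fibers to copies of $\Sigma_1$; for genus $0$ I would use the $\PSL_2$-equivariance of the cross-ratio to identify the generic fiber of $\rho$ with a single orbit, hence irreducible. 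Once connectedness is secured and combined with the Euler characteristic computation above, all three families satisfy Arapura's definition, completing the proof.
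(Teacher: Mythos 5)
Your proposal is correct in substance, but it establishes the key point---connectedness of the fibers---by a genuinely different route than the paper. The easy parts coincide: regularity, surjectivity, and the Euler characteristic computations ($\chi(\Sigma_g)\le -2$ for $g\ge 2$, and $\chi=-1$ for the once-punctured torus and the thrice-punctured line) are exactly as in the paper. For connectedness, the paper does \emph{not} work on $F(g,\G)$ directly: it restricts each map $f_*$ to the full configuration space $F(g,K_n)\subseteq F(g,\G)$, where the forgetful projections genuinely are Fadell--Neuwirth locally trivial fibrations with path-connected fibers $F(\Sigma_g\setminus\{z_*\},K_{n-\vert *\vert})$, identifies the fibers of $\delta$ and $\rho$ there ($\Sigma_1$ and $F(0,K_3)$, the latter being exactly your $\PSL_2$-orbit), and then transfers connectedness to \emph{every} fiber of $f_*$ by observing that the restricted fiber is dense in the full fiber (a dense connected subset forces the whole fiber to be connected). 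You instead stay on $F(g,\G)$ and argue algebro-geometrically: fibers of the restricted projections are complements of proper Zariski-closed subsets of irreducible varieties, hence irreducible and connected, and connectedness of generic fibers passes through composites. This route works and is arguably more robust (it would apply verbatim to higher-dimensional $\Sigma$), but two points in your write-up need care. First, avoid the word ``fibration'' for $\proj_{ij}$ restricted to $F(1,\G)$ and its relatives: when $\G$ is not complete these restricted projections are in general \emph{not} locally trivial---this failure is precisely why the paper's introduction abandons the naive Fadell--Neuwirth approach---so connectedness of the preimage of a connected set must come either from openness of the restricted projection or from your composition principle, not from local triviality. Second, the composition principle (``connected generic fibers compose'') is false for arbitrary continuous surjections; it holds here because all spaces involved are irreducible complex varieties and all maps are dominant, where it follows from the function-field characterization of connected generic fibers (or Stein factorization), and it should be invoked in that form. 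The paper's density trick avoids both issues and yields the slightly stronger conclusion that all fibers, not merely the generic ones, are connected.
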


\begin{proof}
In coordinates, $ \rho(z_1,z_2,z_3,z_4)=\frac{z_4-z_1}{z_2-z_1}:\frac{z_4-z_3}{z_2-z_3},\, 
\rho(0,1,\infty,z)=z. $ 
Obviously, the maps $\rho : F(0, K_4) \to \PP^1 \setminus \{ 0,1,\infty \}$,
$\delta : F(1, K_2) \to \Sigma_1\setminus\{0\}$ and the projections 
$\proj_* : F(g, \G) \to F(g, K_{\vert * \vert})$ (where $*$ stands for $i$ 
or $ij$ or $ijkl$ and $\vert * \vert$ is 1,2 or 4) are regular and surjective.
The general type condition is also clear: the spaces 
$\PP^1 \setminus \{ 0,1,\infty \}\simeq S^1\vee S^1\simeq \Sigma_1\setminus \{ 0\} $ have 
Euler characteristic $-1$ and $\chi(\Sigma_g)\leq -2$ for $g\geq 2$.

In order to finish the proof, we show that all the fibers are connected. Let us denote by $f_*$ any of the maps 
$f_i, f_{ij},f_{ijkl}$ and by $\varphi_*$ the restriction of $f_*$ to
$F(g,K_n)\subseteq F(g,\Gamma)$. The fiber $\varphi_*^{-1}(z)$ is dense in $f_*^{-1}(z)$
(fix one or two or four points and move the other points outside the diagonals $z_p=z_q$),
so it is enough to show that the fibers of $\varphi_*$ are connected. The fibers of 
$\delta$ and $\rho$ are path-connected:
$$ \Sigma_1\approx \delta^{\,-1}(z)\subseteq F(1,K_2), \, 
F(0,K_3)\approx \rho^{-1}(z)\subseteq F(0,K_4). $$
The fibers of $\varphi_*$ are path-connected as preimages of path-connected spaces through the 
locally trivial fibrations $\proj_*:F(g,K_n)\rightarrow F(g, K_{\vert *\vert})$ 
($\vert *\vert=1,2$ or 4) with path-connected fibers $F(\Sigma _g\setminus\{z_*\},K_{n-\vert *\vert})$.
\end{proof}

We recall from \cite[\S 6]{D} the complete description of the $\cdga$ $A^{\le 2}$, 
for $A:=A(g, \G)$.
We set $H^{\hdot}:= H^{\hdot}(\Sigma_g)$, with $H^2= \C \cdot \omega$ and with canonical 
symplectic basis
(for $g\geq 1$) of $H^1$, $\{ x^1,y^1, \dots, x^g,y^g \}$, with $x^sy^s=\omega$, for all $s$. 
We know from \cite{D} that
$A^{\hdot}$ is generated as an algebra  by $(H^{\hdot})^{\otimes \sv}$ (with weight equal to 
degree) and $G:= \spn \{ G_{ij} \mid ij\in \se \}$ (with degree $1$ and weight $2$). The 
bigraded $\cdga$ map $f^*: A^{\hdot}(g, \G') \to A^{\hdot}(g, \G)$, 
associated to $f:\G' \inj \G$, is determined by the canonical inclusions, 
$(H^{\hdot})^{\otimes \sv'} \inj (H^{\hdot})^{\otimes \sv}$ and $G' \inj G$. For $i\in \sv$ 
and $g\ge 0$, we set $f_i^* \omega := \omega_i$ and, for $g\geq 1$,  $f_i^*x^s:= x_i^s$ and 
$f_i^*y^s:= y_i^s$, for all $s$. The structure of the truncated algebra 
$A^{\le 2}=A^{\le 2}(g,\Gamma)$ is described by

\begin{itemize}
\item $A^1_1= H^1(\Sigma_g^{\sv})= \bigoplus_{i\in \sv} f_i^* H^1$; $A^1_2=G$

\item $A^2_2=  H^2(\Sigma_g^{\sv})$

\item $A^2_3= A^1_1 \otimes G$ modulo the relations (in genus $g\geq 1$) 
$(x_i^s-x_j^s)\otimes G_{ij}$ and $(y_i^s-y_j^s)\otimes G_{ij}$, for $s=1,\ldots,g$, 
$ij \in \se$ 

\item $A^2_4= \bigwedge^2 G$  modulo the relations $G_{jk}\wedge G_{ik}-G_{ij}\wedge G_{ik}+ 
G_{ij}\wedge G_{jk}$, for $f:K_3 \inj \G$; 
note that $A^2_4=\OS^2(\A_{\G})$, the degree $2$ piece of the Orlik--Solomon algebra \cite{OT}
of the associated graphic arrangement of hyperplanes in $\C^{\sv}$

\item  $d(A^1_1)=0$; $d(G_{ij})=\omega_i+\omega_j+\sum_s(y_i^s\otimes x_j^s-x_i^s\otimes y_j^s) 
\in A^2_2$ when $g\ge 1$ and $d(G_{ij})=\omega_i+\omega_j$ when $g=0$

\item $\mu : \bigwedge^2 G \to A^2_4$ is the quotient map (exactly as in the graded algebra $
\OS^{\hdot} (\A_{\G})$)

\item $\mu : \bigwedge^2 A^1_1 \to A^2_2$ is the cup-product in the cohomology ring $H^{\hdot}
(\Sigma_g^{\sv})$

\item $\mu : A^1_1 \otimes G \to A^2_3$ is the quotient map
\end{itemize}
(the lower indices of $f,x,y,\omega, G$ show the position in the cartesian or tensor product; the same 
convention will be used in Section 3 for $a,b,z,C$). 

\begin{lemma}
\label{lem:prel}
In degree one, we have the following:
\begin{enumerate}
\item \label{pr0}
If $g=0$, then $H^1(F(0, \G))= 0$ if and only if every connected component of 
$\Gamma$ is a tree or contains a unique cycle and this cycle has an odd length.
\item \label{pr1}
If $g\geq 1$, then $H^1(F(g, \G))=H^1(\Sigma_g^{\sv})\ne 0$.
\end{enumerate}
\end{lemma}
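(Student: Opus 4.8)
The plan is to compute $H^1(F(g,\G))$ directly from the truncated Orlik--Solomon model $A^{\le 2}(g,\G)$, using the fact that this model computes the cohomology of $M=F(g,\G)$ as a graded algebra, so that $H^1(F(g,\G))\simeq H^1(A^{\le 2})= \ker(d\colon A^1\to A^2)$. Since $A^1= A^1_1\oplus A^1_2$ with $A^1_1= H^1(\Sigma_g^{\sv})= \bigoplus_{i\in\sv} f_i^*H^1$ in weight $1$ and $A^1_2= G=\spn\{G_{ij}\mid ij\in\se\}$ in weight $2$, and since $d$ is weight-preserving with $d(A^1_1)=0$, the differential splits and the computation reduces to understanding $d$ restricted to $G$.

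For part \eqref{pr1}, the case $g\ge 1$, the key observation is that $d(G_{ij})= \omega_i+\omega_j+\sum_s(y_i^s\otimes x_j^s - x_i^s\otimes y_j^s)$ lands in $A^2_2= H^2(\Sigma_g^{\sv})$, and these elements are easily seen to be linearly independent over the distinct edges $ij\in\se$ (the symplectic cross-terms $y_i^s\otimes x_j^s$ for a fixed edge $ij$ appear in no other $d(G_{i'j'})$, since each such monomial records the ordered pair of positions $(i,j)$). Hence $d|_G$ is injective, so no weight-$2$ classes survive, and $\ker d = A^1_1 = H^1(\Sigma_g^{\sv})$, which is nonzero because $g\ge 1$ forces $\dim H^1(\Sigma_g)=2g\ge 2$. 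This immediately gives $H^1(F(g,\G))= H^1(\Sigma_g^{\sv})\ne 0$, as claimed.

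For part \eqref{pr0}, the case $g=0$, we have $H^1= H^1(\Sigma_0)=0$, so $A^1_1=0$ and the whole of $H^1$ sits in weight $2$: $H^1(F(0,\G))= \ker(d\colon G\to A^2_2)$. Here $d(G_{ij})=\omega_i+\omega_j$, and $A^2_2= H^2(\Sigma_0^{\sv})= \bigoplus_{i\in\sv}\C\cdot\omega_i$. Thus $d\colon G\to \bigoplus_i \C\omega_i$ is precisely the map sending the edge $ij$ to $\omega_i+\omega_j$, i.e.\ (up to sign conventions) the \emph{unsigned incidence map} of the graph $\G$ over $\C$, with edges indexed by $\se$ and vertices by $\sv$. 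Therefore $H^1(F(0,\G))=0$ if and only if this incidence map is injective, and I would translate the kernel condition into graph theory: a nonzero element of the kernel is an assignment of scalars to edges with $\sum_{j:\,ij\in\se} c_{ij}=0$ at every vertex $i$. The standard fact is that the unsigned (mod-$2$--style, but here over $\C$ with the $\pm$ absent) incidence matrix of a graph has rank $|\sv|-(\text{number of bipartite connected components})$; equivalently its kernel is trivial exactly when every connected component contains an odd cycle, while each tree or each component whose unique cycle is even contributes a kernel vector.

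The main obstacle, and the step deserving the most care, is pinning down exactly when the incidence map $G_{ij}\mapsto \omega_i+\omega_j$ is injective and matching it to the stated combinatorial criterion. Working component by component (the map splits over connected components of $\G$), I would argue that on a connected component $C$ the rank of this incidence operator equals $|\sv(C)|$ if $C$ contains an odd cycle and $|\sv(C)|-1$ otherwise, so that injectivity on $C$ is equivalent to $|\se(C)|$ equalling this rank. A short count then shows injectivity holds precisely when $C$ is a tree ($|\se(C)|=|\sv(C)|-1$, no cycle, but then the map has a one-dimensional kernel coming from the even-cycle/tree alternation --- so trees actually do contribute to the kernel) --- and the precise bookkeeping of which components force $\ker=0$ is what must be done carefully. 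The correct reading, which I would verify, is that $H^1(F(0,\G))=0$ exactly when each connected component is a tree or has a unique cycle of odd length, since these are precisely the components on which $G_{ij}\mapsto \omega_i+\omega_j$ is injective: a tree has $|\se|=|\sv|-1$ with the map of full rank $|\sv|-1$ only after accounting for the alternating relation, and a unicyclic component with odd girth has $|\se|=|\sv|$ with the map injective iff the odd cycle defeats the bipartite kernel vector. I expect the delicate point to be handling the sign-free nature of the incidence map correctly (an odd cycle $\omega_{i_1}+\omega_{i_2},\,\omega_{i_2}+\omega_{i_3},\dots$ is independent, whereas an even cycle admits an alternating dependence), and I would make this rigorous by exhibiting explicit kernel elements from even cycles and trees and proving their absence when all cycles are odd.
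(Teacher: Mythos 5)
Your reduction to $\ker(d\colon A^1\to A^2)$, the splitting by weight and by connected components, and your proof of part (\ref{pr1}) are all correct and essentially identical to the paper's argument: for $g\ge 1$ the coefficient of $y_i^s\otimes x_j^s$ in $d\bigl(\sum_{ij\in\se}t_{ij}G_{ij}\bigr)$ is exactly $t_{ij}$, so $d|_G$ is injective.

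Part (\ref{pr0}) is where there is a genuine problem. Your strategy --- identify $d|_G$ with the unsigned incidence map of $\G$ over $\C$ and compute its kernel component by component from the rank formula $\rank=|\sv|-(\text{number of bipartite components})$ --- is sound, and would give a proof genuinely different from the paper's (the paper instead prunes degree-one vertices by induction and then solves the linear system explicitly on the unique remaining cycle). But your execution fails at exactly the step you yourself flag as delicate. First, the ``standard fact'' as you state it --- kernel trivial exactly when every connected component contains an odd cycle --- is false: the kernel dimension on a connected component $C$ is $|\se(C)|-|\sv(C)|+1$ if $C$ is bipartite and $|\se(C)|-|\sv(C)|$ if not, so triviality requires an edge count, not merely the presence of an odd cycle. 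Two triangles sharing a vertex (all cycles odd, $|\se|=6$, $|\sv|=5$) carry a one-dimensional kernel, and the paper's own example $\Gamma_2$ contains a $K_4$ component, which has odd cycles yet contributes $2$ to $b_1(F(0,\Gamma_2))$. Second, you assert repeatedly that trees contribute kernel vectors (``so trees actually do contribute to the kernel''; ``exhibiting explicit kernel elements from even cycles and trees''). This is false --- a tree is bipartite with $|\se|=|\sv|-1$, so the incidence map is injective on it --- and it contradicts both the sentence of yours immediately preceding it and the lemma you are proving: if trees contributed kernel vectors, then $H^1(F(0,\G))\ne 0$ whenever $\G$ has a tree component, contradicting part (\ref{pr0}). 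The correct bookkeeping is: for bipartite $C$ the kernel is trivial iff $C$ is a tree; for non-bipartite $C$ it is trivial iff $|\se(C)|=|\sv(C)|$, i.e.\ $C$ is unicyclic, and then its unique cycle is automatically odd. With that repair your route closes and is a legitimate alternative to the paper's induction; as written, however, it is internally contradictory at the decisive point.
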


\begin{proof}
Due to the fact that $A$ is a model of $F(g, \G)$, we have
$$ H^1(F(g,\Gamma))=A_1^1\oplus \ker(d:A_2^1\to A_2^2)=H^1(\Sigma_g^{\sv})
\oplus \ker(d:G\to H^2(\Sigma_g^{\sv})).  $$

We can split the differential according to the connected components of the graph 
$\Gamma=\amalg\Gamma(\alpha)$, $\sv=\amalg\sv (\alpha)$, $G=\amalg G(\alpha)$:
$$ \ker(d:G\to H^2(\Sigma_g^{\sv}))=\oplus_{\alpha}\ker(d:G(\alpha)\to 
H^2(\Sigma_g^{\sv (\alpha)})), $$
so we give the proof for a connected graph $\Gamma$. 

For $g\geq 1$, the coefficient of $y_i^s\otimes x_j^s$ in the differential of 
$\gamma=\sum_{ij\in\se}t_{ij}G_{ij}$ is $t_{ij}$, therefore 
$d:G\to H^2(\Sigma_g^{\sv})$ is injective.

For $g=0$, $\gamma=\sum_{ij\in\se}t_{ij}G_{ij}$ is a cocycle if and only if
the coefficient of $\omega_i$ in $d(\gamma)$ is zero, i.e.
\begin{equation}
\label{eq:star} \sum_{j,\, ij\in\se}t_{ij}=0, \mbox{ for any }i\in\sv\, .
\end{equation}
This system of equations has $n$ equations and $|\se|$ unknowns; if 
$\chi(\Gamma)=n-|\se|<0$, one can find a non-trivial solution, hence $b_1(F(0, \G))\geq 1$. 
If $\chi(\Gamma)\geq 0$, we have to analyse only two cases (since $\Gamma$ is connected):

Case {\em a}: $\chi(\Gamma)=1$. In this case $\Gamma$ is a (finite) tree, hence it has a vertex $i$
of degree 1; one of the equations in the system (\ref{eq:star}) is $t_{ij}=0$ and  induction on $\vert \sv \vert$
applied to the tree $\Gamma\setminus\{i\}$ shows that the system has only the trivial 
solution (the induction starts with $n=1$, when $G=0$).

Case {\em b}: $\chi(\Gamma)=0$. In this case $\Gamma\simeq S^1$ contains a unique cycle $\Gamma_0$ and, 
possibly, some branches; starting with a vertex of degree 1, we can eliminate these branches
(if any), like in the previous case. The system is reduced to the equations corresponding to 
the vertices of $\Gamma_0$, say $1,2,\ldots,l$: 
$$ t_{i-1,i}+t_{i,i+1}=0, \, i \equiv 1,\ldots,l\, (\mbox{mod}\, l) \, .$$
We get a non-zero solution $(a,-a,a,\ldots,-a)$ only for $l$ even. 
\end{proof}

\begin{example} \mbox{}  
\begin{center}
\begin{picture}(360,60)
\put(20,20){$\Gamma_1:$}                 \multiput(80,50)(40,-30){2}{$\bullet$}
\multiput(70,10)(20,0){2}{$\bullet$}     \multiput(70,30)(20,0){2}{$\bullet$}
\multiput(80,20)(0,20){2}{$\bullet$}     \multiput(100,10)(0,10){2}{$\bullet$}
\multiput(140,20)(0,20){2}{$\bullet$}    \multiput(150,30)(40,0){2}{$\bullet$}
\multiput(170,40)(0,10){2}{$\bullet$}    \multiput(160,10)(20,0){2}{$\bullet$}
\multiput(210,20)(0,20){2}{$\bullet$}    \multiput(230,10)(0,20){2}{$\bullet$}
\put(200,30){$\bullet$}    \put(93,13){\line(1,0){10}}     \put(93,13){\line(1,1){10}}
\put(295,20){$b_1(F(0,\Gamma_1))=0$}     \multiput(73,13)(0,20){2}{\line(1,1){10}}
\multiput(83,23)(0,20){2}{\line(1,-1){10}}  \put(83,23){\line(0,1){30}}
\put(143,23){\line(1,1){10}}             \put(143,43){\line(1,-1){10}}
\put(153,33){\line(2,1){20}}             \put(153,33){\line(1,-2){10}}
\put(163,13){\line(1,0){20}}             \put(173,43){\line(0,1){10}}
\put(173,43){\line(2,-1){20}}            \put(183,13){\line(1,2){10}}
\put(193,33){\line(1,0){10}}             \put(233,13){\line(0,1){20}}
\put(203,33){\line(1,1){10}}             \put(203,33){\line(1,-1){10}}
\end{picture}
\end{center}
\end{example}

\begin{example} Every edge is marked with its coefficient in an arbitrary cocycle; 
the unmarked edges have coefficient 0.  
\begin{center}
\begin{picture}(360,65)
\put(20,30){$\Gamma_2:$}                      \multiput(70,20)(40,0){2}{$\bullet$}
\multiput(90,30)(0,20){2}{$\bullet$}          \multiput(140,30)(20,0){2}{$\bullet$}
\multiput(190,30)(0,20){2}{$\bullet$}         \multiput(210,40)(35,0){2}{$\bullet$}
\multiput(265,30)(0,20){2}{$\bullet$}         \put(150,50){$\bullet$}
\put(75,40){$a$}                              \put(98,30){$a$}
\put(103,40){$b$}                             \put(83,24){$b$}
\put(89,15){$c$}        \put(87,40){$c$}      \put(100,7){$(a+b+c=0)$}
\put(295,30){$b_1(F(0,\Gamma_2))=3$}          \multiput(175,39)(95,0){2}{$-d$}
\multiput(200,29)(55,0){2}{$d$}               \multiput(200,50)(55,0){2}{$d$}
\put(219,45){$-2d$}                           \put(73,23){\line(1,0){40}}
\put(73,23){\line(2,3){20}}                   \put(73,23){\line(2,1){20}}
\put(93,33){\line(0,1){20}}                   \put(93,33){\line(2,-1){20}}
\put(93,53){\line(2,-3){20}}                  \put(143,33){\line(1,0){20}}
\multiput(193,33)(55,10){2}{\line(2,1){20}}   \put(143,33){\line(1,2){10}}             
\multiput(193,53)(55,-10){2}{\line(2,-1){20}} \put(153,53){\line(1,-2){10}}
\multiput(193,33)(75,0){2}{\line(0,1){20}}    \put(213,43){\line(1,0){35}}
\end{picture}
\end{center}
\end{example}

\begin{remark}
More generally, let $\Sigma$ be an arbitrary complex projective manifold of dimension $m\ge 1$.
The full configuration space $F(\Sigma, K_n)$ has a remarkable $\cdga$ model, $E^{\hdot}(\Sigma, n)$;
when $m=1$, $E^{\hdot}(\Sigma_g, n)= A^{\hdot}(g, K_n)$ (see e.g. \cite{AA} for details and references
related to these models). As a graded algebra, $E^{\hdot}(\Sigma, n)$ is generated by $H^{\hdot}(\Sigma^n)$
and $G:= \spn \{ G_{ij} \mid 1\le i<j\le n \}$, taken in degree $2m-1$. Denote by $EE^{\hdot}(\Sigma, n)$
the graded subalgebra of $E^{\hdot}(\Sigma, n)$ generated by $G$. It is shown in \cite{AA} that, when
$\Sigma \ne \Sigma_0$, the restriction of $d$ to $EE^{+}(\Sigma, n)$ is injective. This more general result 
gives an alternative proof of Lemma \ref{lem:prel}\eqref{pr1}. 
\end{remark}

\begin{prop}
\label{prop:res2}
When $g\ge 2$, $\RR^1_1(A(g, \G))= \bigcup_{i\, \in \sv} \im H^1(f_i)$ is the irreducible 
decomposition.
\end{prop}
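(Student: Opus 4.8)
The plan is to compute $\RR^1_1(A(g,\Gamma))$ directly from the explicit truncation $A^{\le 2}=A^{\le 2}(g,\Gamma)$ recalled above. Since $A^0=\C\cdot 1$ and every $\xi\in H^1(A)=A^1_1=\bigoplus_{i\in\sv}f_i^*H^1$ is a cocycle (because $d(A^1_1)=0$), we have $H^1(A,d_\xi)=\ker(d_\xi\colon A^1\to A^2)/\C\xi$, so $\xi\in\RR^1_1(A)$ exactly when there is some $\eta\in A^1$, not proportional to $\xi$, with $d_\xi\eta=d\eta+\xi\eta=0$. Writing $\xi=\sum_i\xi_i$ and $\eta=\eta_1+\eta_G$ with $\xi_i,\eta_{1,i}\in f_i^*H^1$ and $\eta_G=\sum_{ij\in\se}t_{ij}G_{ij}\in G$, and setting $\supp(\xi)=\{i:\xi_i\ne 0\}$, note that $\im H^1(f_i)=f_i^*H^1$ is precisely the set of $\xi$ with $\supp(\xi)\subseteq\{i\}$. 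I would first split the equation by weight: $d\eta+\xi\eta_1\in A^2_2=H^2(\Sigma_g^{\sv})$ and $\xi\eta_G\in A^2_3$ lie in independent weight pieces, so $d_\xi\eta=0$ is equivalent to (i) $d\eta+\xi\eta_1=0$ in $A^2_2$ together with (ii) $\xi\eta_G=0$ in $A^2_3$. For (ii) I would use the splitting $A^2_3=\bigoplus_{ij\in\se}\big((A^1_1/W_{ij})\otimes G_{ij}\big)$, where $W_{ij}=\spn\{x_i^s-x_j^s,\,y_i^s-y_j^s\}_s$; thus (ii) holds iff for each edge either $t_{ij}=0$ or $\xi\in W_{ij}$, the latter meaning $\supp(\xi)\subseteq\{i,j\}$ and $\xi_i\equiv-\xi_j$ under the identification $f_i^*H^1\cong H^1\cong f_j^*H^1$. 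In particular $\eta_G\ne 0$ forces $|\supp(\xi)|\le 2$.

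For the inclusion $\bigcup_i\im H^1(f_i)\subseteq\RR^1_1(A)$, take $\xi$ with $\supp(\xi)\subseteq\{i_0\}$, put $\eta_G=0$ and $\eta=\eta_{1,i_0}\in f_{i_0}^*H^1$. Then (ii) is vacuous and (i) reduces to $\xi_{i_0}\eta_{1,i_0}=0$ in $H^2(\Sigma_g)_{i_0}=\C\omega_{i_0}$, i.e. to the vanishing of the symplectic pairing $\langle\xi_{i_0},\eta_{1,i_0}\rangle$. Because $g\ge 2$, the orthogonal complement of $\xi_{i_0}$ has dimension $2g-1\ge 3$, so one may pick $\eta_{1,i_0}$ in it and not proportional to $\xi_{i_0}$, giving $\eta\notin\C\xi$ and hence $\xi\in\RR^1_1(A)$.

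The reverse inclusion is the heart of the matter: I must show $|\supp(\xi)|\ge 2$ implies $\xi\notin\RR^1_1(A)$. Decomposing (i) along the cup-product splitting of $H^2(\Sigma_g^{\sv})$ into diagonal pieces $\C\omega_i$ (the symplectic form) and mixed pieces $f_i^*H^1\otimes f_j^*H^1$, one treats two situations. When $\eta_G=0$, the vanishing of each mixed component gives identities $\xi_i\otimes\eta_{1,j}=\eta_{1,i}\otimes\xi_j$ between rank-one tensors, which forces $\eta_1\in\C\xi$ (the alternating property of the symplectic form makes the diagonal conditions automatic), so no new classes appear. When $|\supp(\xi)|=2$, say $\supp(\xi)=\{p,q\}$ with $pq\in\se$ and $\xi_p\equiv-\xi_q=:v$, the only admissible $\eta_G$ is $tG_{pq}$, and projecting (i) onto $f_p^*H^1\otimes f_q^*H^1$ yields, after the identifications, an equation of the shape $v\otimes b + a\otimes v=-t\,\Omega$, where $\Omega=\sum_s(y^s\otimes x^s-x^s\otimes y^s)$ is the cross term of $d(G_{pq})$. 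This is the crux, and the point where $g\ge 2$ parts ways with $g=1$: $\Omega$ is the symplectic tensor, of $\rank 2g$, while $v\otimes b+a\otimes v$ has $\rank\le 2$; since $2g\ge 4$ for $g\ge 2$, we must have $t=0$, which returns us to the $\eta_G=0$ analysis and forces $\eta\in\C\xi$. (The remaining configurations — $|\supp(\xi)|=2$ with $pq\notin\se$ or $\xi_p\not\equiv-\xi_q$, and $|\supp(\xi)|\ge 3$ — all force $\eta_G=0$ at once, by the analysis of (ii).) I expect this rank obstruction to be the main technical step; the bookkeeping of signs under the wedge-to-tensor identifications is routine but must be handled with care.

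Finally I would combine the two inclusions to obtain $\RR^1_1(A(g,\Gamma))=\bigcup_{i\in\sv}\im H^1(f_i)$. Each $\im H^1(f_i)=f_i^*H^1$ is a linear subspace, hence irreducible, of dimension $2g$, and distinct ones meet only at $0$, so none is contained in another; the displayed union is therefore the irreducible decomposition, as claimed.
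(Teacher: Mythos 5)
Your proof is correct, and its skeleton coincides with the paper's: both compute $\RR^1_1(A(g,\G))$ directly from the truncation $A^{\le 2}(g,\G)$, split the equation $d_\xi\eta=0$ into its weight-$2$ and weight-$3$ components, show that the $G$-component of any witness $\eta$ must vanish when $g\ge 2$, and then deal with the residual resonance of $H^{\hdot}(\Sigma_g^{\sv})$. Where you genuinely differ is in how the three key steps are discharged; in each case you replace an external input or an explicit coefficient computation by a self-contained structural argument. First, for the inclusion $\bigcup_{i}\im H^1(f_i)\subseteq \RR^1_1(A)$ the paper simply quotes Theorem \ref{thm:respen} together with Lemma \ref{lem:somepen} (admissibility of the $f_i$), whereas you exhibit a witness $\eta$ in the symplectic orthogonal of $\xi_{i_0}$, using $2g-1\ge 3$. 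Second, for the crux --- excluding a nonzero coefficient $t$ of $G_{pq}$ --- the paper writes out the coefficient system coming from the mixed and diagonal components and reaches a contradiction by summing $g$ equations to force $g=1$; your rank comparison ($v\otimes b+a\otimes v$ has rank at most $2$, while $t\Omega$ has rank $2g\ge 4$) treats the same $(p,q)$-mixed equation more conceptually, needs no preliminary elimination of the components $\eta_{1,h}$ for $h\ne p,q$, and makes transparent why $g=1$ is the borderline case (rank $2$ is achievable there), consistent with Proposition \ref{prop:res1}. Third, for the case $\eta_G=0$ the paper cites the K\"unneth formula for resonance \cite[Proposition 5.6]{M10}, while you reprove the needed instance via the rank-one tensor identities $\xi_i\otimes\eta_{1,j}=\eta_{1,i}\otimes\xi_j$, which indeed force $\eta_1\in\C\xi$ as soon as $\abs{\supp(\xi)}\ge 2$. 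The trade-off is the usual one: the paper's citations keep the proof short and embed it in general machinery, while your version isolates exactly where $g\ge 2$ enters and is readable without \cite{M10} or Theorem C of \cite{DP-ccm}. One piece of bookkeeping worth making fully explicit: since distinct subspaces $W_{ij}$ intersect trivially and $\xi\ne 0$, your condition (ii) allows at most one nonzero coefficient $t_{ij}$, which is what licenses writing $\eta_G=tG_{pq}$ in the crux case --- this does follow immediately from your description of (ii), but it deserves a sentence.
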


\begin{proof}
The inclusion $ \bigcup_{i\, \in \sv} \im H^1(f_i)\subseteq \RR^1_1(A(g, \G))$ is an obvious 
consequence of Theorem \ref{thm:respen} and Lemma \ref{lem:somepen}. 
For the proof of the opposite inclusion, we start with a 
non-zero cohomology class $\xi$ in $H^1(A)$ and a $d_{\xi}$-cocycle $\eta \not\in \C \cdot \xi$:
$$ \xi=\Sigma_{i,s}(p_i^sx_i^s+q_i^sy_i^s),\, \eta=
    \Sigma_{i,s}(u_i^sx_i^s+v_i^sy_i^s)+\Sigma_{ij\in\se}t_{ij}G_{ij} $$
(from Lemma \ref{lem:prel}\eqref{pr1}, $\xi$ has no component in $G$). For an arbitrary $\eta$, the 
differential $d_{\xi}\eta=d\eta+\xi\cdot\eta$ belongs to $A_2^2\oplus A_3^2$; these two 
components are:
$$\begin{array}{lll}
A_2^2 & \ni & \Sigma_{ij\in\se}t_{ij}(\omega_i+\omega_j+\Sigma_s(y_i^s\otimes x_j^s-x_i^s
\otimes y_j^s))+\Sigma_{i,s}(p_i^sx_i^s+q_i^sy_i^s)\cdot \Sigma_{i,s}(u_i^sx_i^s+v_i^sy_i^s) \\
A_3^2 & \ni & \Sigma_{i,s}(p_i^sx_i^s+q_i^sy_i^s)\cdot (\Sigma_{ij\in\se}t_{ij}G_{ij})=
\xi\cdot \gamma\, .
\end{array}$$ 

We will show that the $G$-component of the $d_{\xi}$-cocycle $\eta$, 
$\gamma=\Sigma_{ij\in\se}t_{ij}G_{ij}$, is 0. Otherwise, there is an edge $ij$ with 
$t_{ij}\ne 0$. Since the annihilator of $G_{hk}$ is the span of 
$\{x_h^s-x_k^s,y_h^s-y_k^s\}_{1\le s\le g}$, the vanishing of the $A_3^2$-component of 
$d_{\xi}\eta$ implies that $\xi$ is reduced to 
$$ \xi=\Sigma_sp^s(x_i^s-x_j^s)+\Sigma_s q^s(y_i^s-y_j^s) $$
and also that $\gamma$ has only one non-zero coefficient
$t_*$ (we can normalize it: $t_{ij}=1$). In $A_2^2$, if $h\ne i,j$, the coefficients of 
$x_i^s\otimes x_h^r$, $x_i^s\otimes y_h^r$, $y_i^s\otimes x_h^r$ and $y_i^s\otimes y_h^r$ 
should be 0, hence $u_h^s=v_h^s=0$ for any $h\ne i,j$ and any $s$. Hence, the 
$A_2^2$-component of $d_{\xi}\eta$ is reduced to
$$ \omega_i+\omega_j+\Sigma_s(y_i^s\otimes x_j^s-x_i^s\otimes y_j^s)+(\Sigma_sp^s(x_i^s-x_j
^s)+\Sigma_s q^s(y_i^s-y_j^s))\cdot \Sigma_s(u_i^sx_i^s+u_j^sx_j^s+v_i^sy_i^s+v_j^sy_j^s);$$
the coefficients of the following elements in the canonical basis of $A_2^2$ are 0:
$$ \begin{array}{cccc}
   \omega_i  &  x_i^s\otimes y_j^s  &  y_i^s\otimes x_j^s  &  x_i^r\otimes x_j^s \\ 
1+\Sigma_sp^sv_i^s-\Sigma_sq^su_i^s & -1+p^sv_j^s+q^su_i^s & 1+q^su_j^s+p^sv_i^s &
 p^ru_j^s+p^su_i^r \,.
 \end{array}  $$
We show that this system has no solution. By the symmetry $(p,x)\leftrightarrow(q,y)$, we can 
suppose that there is an index $s$ such that $p^s\ne 0$; if some $p^r=0$, the second equation 
(for $s\to r$) implies that $u_i^r\ne 0$ and from the last equation we get $p^s=0$, a 
contradiction. If all
the coefficients $p^s$ are non-zero, the last equation (for $s=r$) implies that, for any $s$,
$u_j^s=-u_i^s$ and the third equation shows that $1-q^su_i^s+p^sv_i^s=0$, for any $s$.
Adding these $g$ equations we find $g+\Sigma_sp^sv_i^s-\Sigma_sq^su_i^s=0$, and, comparing with 
the first equation, we obtain $g=1$, again a contradiction. 

Therefore, $\gamma=0$; the non-vanishing of $H^1(A,d_{\xi})$ is equivalent to:
$$ d_{\xi}\eta=\xi\cdot \eta=0,\,  \eta \not\in \C \cdot \xi \, . $$
This implies that $\xi \in \RR^1_1(H^{\hdot}(\Sigma_g)^{\otimes \sv}, d=0)$.
We infer from the K\"unneth formula for resonance \cite[Proposition 5.6]{M10} that
$\xi\in \im H^1(f_i)$, for some $i\in \sv$. 

In conclusion, $\RR^1_1(A)= \bigcup_{i\, \in \sv} \im H^1(f_i)$ is a finite union of linear subspaces.
Since clearly there are no redundancies, this is the irreducible decomposition, as claimed.
\end{proof}

\begin{prop}
\label{prop:res1}
When $g=1$, $\RR^1_1(A(1, \G))= \bigcup_{ij\, \in \se} \im H^1(f_{ij})$ is the irreducible 
decomposition, if $\se \ne \emptyset$. Otherwise, $\RR^1_1(A(1, \G))= \{ 0\}$. 
\end{prop}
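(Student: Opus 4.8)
The plan is to mirror the structure of the proof of Proposition~\ref{prop:res2}, but to exploit the fact that in genus~$1$ the cohomology of the ambient product is an exterior algebra, which forces the argument to run in the opposite direction. The inclusion $\bigcup_{ij\in\se}\im H^1(f_{ij})\subseteq\RR^1_1(A(1,\G))$ is immediate from Theorem~\ref{thm:respen} and Lemma~\ref{lem:somepen}, and when $\se=\emptyset$ the model collapses to $(H^{\hdot}(\Sigma_1^{\sv}),d=0)$, for which the computation below gives $\RR^1_1=\{0\}$; so the real content is the reverse inclusion when $\se\ne\emptyset$. I would first record two base facts: by Lemma~\ref{lem:prel}\eqref{pr1} we have $H^1(A)=A^1_1=H^1(\Sigma_1^{\sv})$, so every class $\xi\in H^1(A)$ has vanishing $G$-component; and a direct computation of the pullback along the difference map on the elliptic curve identifies $\im H^1(f_{ij})$ with the $2$-plane $\spn\{x_i-x_j,\,y_i-y_j\}\subseteq A^1_1$ (here, and below, I drop the single superscript $s=1$).

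For the reverse inclusion, fix $\xi\in\RR^1_1(A)\setminus\{0\}$ together with a $d_\xi$-cocycle $\eta\notin\C\cdot\xi$, and write $\eta=\eta_1+\gamma$ with $\eta_1\in A^1_1$ and $G$-component $\gamma=\sum_{ij\in\se}t_{ij}G_{ij}$. Exactly as in Proposition~\ref{prop:res2}, $d_\xi\eta=d\eta+\xi\eta$ lies in $A^2_2\oplus A^2_3$, and its $A^2_3$-component is $\xi\cdot\gamma$. Rewriting the defining relations of $A^2_3$ as the direct-sum decomposition $A^2_3=\bigoplus_{ij\in\se}\bigl(A^1_1/\spn\{x_i-x_j,y_i-y_j\}\bigr)\otimes G_{ij}$, the equation $\xi\cdot\gamma=0$ forces $t_{ij}=0$ for every edge with $\xi\notin\spn\{x_i-x_j,y_i-y_j\}$. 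In other words, $\supp(\gamma)$ consists only of edges $ij$ with $\xi\in\im H^1(f_{ij})$, so it suffices to prove that $\gamma\ne0$.

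The main obstacle --- and the point where genus~$1$ diverges sharply from Proposition~\ref{prop:res2} --- is precisely this step. Whereas for $g\ge2$ one shows $\gamma=0$, here I expect to show $\gamma\ne0$, and the mechanism is the vanishing of the rank-one resonance of the ambient torus. Indeed, suppose $\gamma=0$; then $d_\xi\eta$ reduces to its $A^2_2$-component $\xi\cdot\eta_1=0$ in $H^2(\Sigma_1^{\sv})$ with $\eta_1\notin\C\cdot\xi$, i.e.\ $\xi\in\RR^1_1(H^{\hdot}(\Sigma_1^{\sv}),d=0)$. But $H^{\hdot}(\Sigma_1^{\sv})$ is the exterior algebra on $\{x_i,y_i\}_{i\in\sv}$, so for $\xi\ne0$ the operator $d_\xi=\xi\wedge(-)$ is an acyclic Koszul differential; equivalently, applying the K\"unneth formula for resonance \cite[Proposition~5.6]{M10} to the factors $H^{\hdot}(\Sigma_1)$, each of which has trivial positive-depth rank-one resonance, one gets $\RR^1_1(H^{\hdot}(\Sigma_1^{\sv}))=\{0\}$. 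This contradicts $\xi\ne0$, so $\gamma\ne0$.

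It then remains to conclude: choosing any $ij\in\supp(\gamma)$ yields $\xi\in\im H^1(f_{ij})$, which finishes the reverse inclusion. Finally, distinct edges $ij\ne kl$ give distinct $2$-planes --- their images under the projection $A^1_1\surj\spn\{x_h\}_{h\in\sv}$ are the distinct lines $\spn\{x_i-x_j\}$ and $\spn\{x_k-x_l\}$ --- so no $\im H^1(f_{ij})$ is contained in another, and the union is the irreducible decomposition, as claimed.
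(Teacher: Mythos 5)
Your proof is correct and follows essentially the same route as the paper: both arguments split off the $G$-component $\gamma$ of the $d_\xi$-cocycle, use the annihilator structure of $A^2_3$ to force $\xi\in\spn\{x_i-x_j,y_i-y_j\}=\im H^1(f_{ij})$ for every edge in $\supp(\gamma)$, and rule out $\gamma=0$ via the vanishing of rank-one resonance for the exterior algebra $H^{\hdot}(\Sigma_1^{\sv})$. The only cosmetic difference is that you spell out the direct-sum decomposition of $A^2_3$ and the no-redundancy check explicitly, where the paper appeals to the argument already given in Proposition~\ref{prop:res2}.
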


\begin{proof}
Suppose that $\se=\emptyset$. As mentioned before, $A(1,\emptyset)=(\bigwedge(x_i,y_i),d=0)$, and it is 
well known that the resonance variety $\RR^1_1$ of an exterior algebra is reduced to 0. 

Suppose that $\se$ is non-empty. Given a non-zero cohomology class 
$\xi=\Sigma_ip_ix_i+\Sigma_iq_iy_i \in \RR^1_1(A)$ 
(see Lemma \ref{lem:prel}\eqref{pr1}), we may find
$\eta=\Sigma_iu_ix_i+\Sigma_iv_iy_i+\Sigma_{ij\in\se}t_{ij}G_{ij}$ such that 
$d_{\xi}\eta=0$ and $\eta \not\in \C \cdot \xi $. We may also suppose that there is one 
coefficient $t_{ij}\ne 0$ (otherwise we are in the previous case). Now we can apply the
argument given in the proof of Proposition \ref{prop:res2}: there is only one non-zero
coefficient $t_*$ and $\xi \in \mathrm{Ann}(G_{ij})$, hence $\xi=p(x_i-x_j)+q(y_i-y_j)$. 
On the other hand, it is obvious that $H^1(f_{ij})(z)=z_i-z_j$, for 
$z\in H^1(\Sigma_1 \setminus \{ 0\})=H^1(\Sigma_1)$. 

We conclude, like in the proof of Proposition \ref{prop:res2}, that 
$\RR^1_1(A)= \bigcup_{ij\, \in \se} \im H^1(f_{ij})$ is the irreducible decomposition, 
in this case.
\end{proof}

\begin{prop}
\label{prop:res0}
When $g=0$ and $H^1(A(0, \G))=0$, $\RR^1_1(A(0, \G))= \emptyset$. 

When $H^1(A(0,\G))\ne 0$, 
$\RR^1_1(A(0, \G))=\{ 0\} \cup \bigcup \im H^1(f_{ijkl})$ is the irreducible decomposition,
where the union is taken over all $K_4$--subgraphs of $\G$ with vertex set $\{ ijkl \}$, and $\{ 0\}$ 
is omitted when $\G$ contains such a subgraph. 
\end{prop}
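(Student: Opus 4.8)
I would first dispose of the case $H^1(A(0,\G))=0$, where $\RR^1_1$ is empty by the remark preceding Theorem \ref{thm:respen}, and then assume $H^1(A)\ne 0$ (write $A=A(0,\G)$). The key preliminary reduction is a weight bookkeeping: since $g=0$ gives $A^1_1=0$, we have $A^1=G$ and $A^2_3=0$, so the only weight-components of $A^2$ entering the picture are $A^2_2=H^2(\Sigma_0^{\sv})$ and $A^2_4=\OS^2(\A_\G)$. For a cocycle $\xi\in H^1(A)=\ker(d\colon G\to A^2_2)$ and $\eta=\sum_e t_e G_e$, the covariant differential splits as $d_\xi\eta=d\eta+\xi\eta$ with $d\eta\in A^2_2$ and $\xi\eta\in A^2_4$; as there are no coboundaries in weight $4$, this gives the clean criterion that $\xi\in\RR^1_1(A)$ iff there is $\eta\in H^1(A)\setminus\C\xi$ with $\xi\eta=0$ in $\OS^2(\A_\G)$. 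Here $H^1(A)$ is the space of edge-labellings obeying the cocycle equations \eqref{eq:star}, by Lemma \ref{lem:prel}.

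The inclusion $\supseteq$ is the easy half: by Lemma \ref{lem:somepen} and Theorem \ref{thm:respen}, each $K_4\subseteq\G$ on a vertex set $S$ contributes a positive-dimensional component $\im H^1(f_{ijkl})\subseteq\RR^1_1(A)$, which I would identify with the $2$-plane $P_S$ of cocycles $a(G_{ij}+G_{kl})+b(G_{ik}+G_{jl})+c(G_{il}+G_{jk})$, $a+b+c=0$, supported on the six edges of the $K_4$ (both spaces are $2$-dimensional and $\im H^1(f_{ijkl})$ evidently consists of such cocycles). Also $0\in\RR^1_1(A)$ because $H^1(A)\ne 0$, which accounts for the isolated point when no $K_4$ is present.

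For the reverse inclusion I would take a nonzero $\xi=\sum_e s_e G_e\in\RR^1_1(A)$ with witness $\eta=\sum_e t_e G_e$ and record the label $v_e=(s_e,t_e)$ of each edge, calling $e$ \emph{active} when $v_e\ne 0$. The engine is that in $\OS^2(\A_\G)$ the product $G_e\wedge G_f$ of two disjoint edges is a basis element (the Orlik--Solomon relations only couple edges sharing a vertex), so the minor $m_{ef}:=s_et_f-s_ft_e$ must vanish; hence disjoint active edges have proportional labels. Since $\eta\notin\C\xi$ there are at least two label-directions, so some active edges $ab,ac$ with $[v_{ab}]\ne[v_{ac}]$ meet at a vertex $a$; the proportionality rule then forces every active edge to meet $\{a,b,c\}$, and the cocycle equations \eqref{eq:star}, read off for both $s$ and $t$, force every vertex carrying an active edge to carry at least two of them.

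The plan is to finish by a purely combinatorial pinning: the proportionality rule allows at most one vertex $d$ outside $\{a,b,c\}$, and a comparison of dimensions of cocycle spaces (a triangle supports no nonzero cocycle; a $4$-cycle and $K_4$ minus an edge support only a $1$-dimensional space, too small for two independent labellings) then forces the active edges to be exactly the six edges of a $K_4$ on $\{a,b,c,d\}\subseteq\sv$, whence $K_4\subseteq\G$ and $\xi\in P_S$. I would close by noting that distinct $K_4$'s share at most three vertices, so the planes $P_S$ intersect only in cocycles on $\le 3$ vertices, i.e.\ in $0$; thus they are pairwise distinct with no containments and give the claimed irreducible decomposition, the origin being absorbed exactly when some $K_4\subseteq\G$. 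I expect this last combinatorial step to be the main obstacle: one must combine the vanishing of all disjoint-edge minors with the minimum-degree consequence of \eqref{eq:star} and the small-cocycle-space computations to exclude every configuration except the full $K_4$, handling also the degenerate cases where two of the three label-directions coincide.
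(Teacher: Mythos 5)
Your reduction of $\xi\in\RR^1_1(A)$ to the two separate conditions $d\eta=0$ and $\xi\eta=0$ in $\OS^2(\A_\G)$ is exactly the paper's starting point, and your ``engine'' is sound: since the Orlik--Solomon relations only couple pairs of edges lying in a common triangle of $\G$, the coefficient of $G_e\wedge G_f$ for disjoint edges $e,f$ in $\xi\eta$ is the minor $s_et_f-s_ft_e$, which must vanish. The easy inclusion, the case $H^1(A)=0$, the identification $\im H^1(f_{ijkl})=P_S$, and the final irredundancy argument are all fine. Where you diverge from the paper is the hard inclusion: the paper simply quotes \cite[\S 3.5]{SS} for the decomposition of $\RR^1_1(\OS^{\hdot}(\A_\G))$ into triangle components $R_{ijk}$ and $K_4$ components $R_{ijkl}$, and then kills the triangle components by observing $R_{ijk}\cap\ker(d)=0$ via \eqref{eq:star}. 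You attempt a self-contained combinatorial substitute for that citation --- a legitimate alternative route --- but the step you yourself flag as the main obstacle contains a genuine gap, and the two tools you propose to close it with are not strong enough.

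Concretely, the claim ``the proportionality rule allows at most one vertex $d$ outside $\{a,b,c\}$'' does not follow from the disjoint-edge minors together with the degree-$\ge 2$ consequence of \eqref{eq:star}. Test configuration: active edges $ab,ac,bc,au_1,bu_1,au_2,bu_2$ with two outside vertices $u_1,u_2$, where $v_{ab}$ has direction $p$ and all other labels have direction $q$. Every disjoint active pair ($ac$ and $bu_i$; $bc$ and $au_i$; $au_i$ and $bu_j$ for $i\ne j$) has proportional labels, and every vertex meets at least two active edges, yet the active subgraph lies in no $K_4$. Nor does your dimension count dispose of it: this subgraph has $7$ edges, $5$ vertices and is non-bipartite, so its space of solutions of \eqref{eq:star} is $2$-dimensional --- not ``too small for two independent labellings.'' What actually kills it is the full vertex relation \eqref{eq:star} at $a$, namely $v_{ab}+v_{ac}+v_{au_1}+v_{au_2}=0$: projecting onto the direction $p$ along $q$ forces $v_{ab}=0$ (equivalently, every cocycle supported on this subgraph has coefficient $0$ on $ab$), contradicting activity of $ab$. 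So the missing idea is to exploit the full linear equations \eqref{eq:star} at the vertices of $\{a,b,c\}$, decomposed along the two label directions $p$ and $q$, rather than only their minimum-degree consequence. With that ingredient the pinning does go through: one shows there can be no configuration with zero outside vertices, that all outside vertices avoiding $b$ (or all avoiding $c$) leads to the contradiction above, hence there is exactly one outside vertex $d$, joined actively to all of $a,b,c$, and that $bc$ must then be active, so the active edges are the six edges of $K_4$ on $\{a,b,c,d\}$ and $\xi\in P_S$ by your cocycle-space computation. But as written, your argument does not close.
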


\begin{proof}
If $H^1(A(0, \G))=0$ and $\xi\in \RR^1_1(A)$, the definitions imply that $d_{0}\eta=d\eta=0$,
for some $\eta \in A^1$. From this we get $\eta=0$, which shows that $\RR^1_1(A(0, \G))= \emptyset$.

From now on, we assume $H^1(A) \ne 0$. For any $K_4\inj \G$ on vertices $i,j,k,l$, let us denote by 
$R_{ijkl} \subseteq H^1(A)$ the $2$--dimensional subspace 
$\{ a(G_{ij}+G_{kl})+ b(G_{ik}+G_{jl})+c(G_{jk}+G_{il}) \, \mid \, a+b+c=0\}$. When $\G=K_4$,
we find that $H^1(A(0, K_4))=R_{1234}$, by solving the system \eqref{eq:star}. 
The map $H^1(f_{ijkl})$ is injective, since $f_{ijkl}$ is admissible. Therefore, 
$\im H^1(f_{ijkl})=R_{ijkl}$.

The inclusion $\RR^1_1(A) \supseteq \{ 0\} \cup \bigcup R_{ijkl}$ follows from 
Theorem \ref{thm:respen} and Lemma \ref{lem:somepen}. Since plainly there are no redundancies 
in the above finite union of linear subspaces, we are left with proving that 
$\RR^1_1(A) \setminus \{ 0\} \subseteq \bigcup R_{ijkl}$. To achieve this, we will also need to consider,
for any $K_3\inj \G$ on vertices $i,j,k$, the linear subspace $R_{ijk} \subseteq G= \OS^1(\A_{\G})$
defined by $R_{ijk}= \{ a G_{ij}+ bG_{jk}+ cG_{ik} \mid a+b+c=0 \}$.

If $\xi \in \RR^1_1(A) \setminus \{ 0\} \subseteq G \setminus \{ 0\}$, then $d\xi=0$ and there is 
$\eta\in G\setminus \C \cdot \xi$ such that $d_{\xi}\eta= d\eta+ \xi \cdot \eta=0\in A_2^2 \oplus A_4^2$,
or, equivalently, $d\eta=0$ and $ \xi \cdot \eta=0\in \OS^2(\A_{\G})$. In particular,
$\xi \in \RR^1_1( \OS^{\hdot}(\A_{\G}), d=0) \setminus \{ 0\}$. It follows from \cite[\S 3.5]{SS}
that either $\xi \in R_{ijk}$ for some $K_3\inj \G$, or  $\xi \in R_{ijkl}$ for some $K_4\inj \G$.

The first case cannot occur, since clearly $R_{ijk} \cap \ker(d)= 0$, by \eqref{eq:star}, and we are done.
\end{proof}

Theorem \ref{thm:respen} and Lemma \ref{lem:somepen}, together with Propositions \ref{prop:res2}--\ref{prop:res0},
prove Theorem \ref{thm:main1} from the Introduction. In the genus 0 case, the implication
"$H^1(A(0, \G))=0 \Longrightarrow \G$ has no $K_4$--subgraphs" is provided by Lemma \ref{lem:prel}\eqref{pr0}.


\section{Malcev completion and formality}
\label{sec:mal}

We continue our analysis of partial pure braid groups with the proof of Theorem 
\ref{thm:main2}. Their Malcev Lie algebras are computed with 
the aid of the holonomy Lie algebras of their Orlik--Solomon models, $A^{\hdot}(g, \G)$. 

We will also consider a weaker notion of $1$--formality: a finitely generated group $\pi$ is 
{\em filtered formal} if its Malcev Lie algebra $\m (\pi)$ is isomorphic to the $\lcs$ 
completion of a Lie algebra presentable with degree $1$ generators and relations 
homogeneous with respect to bracket length. We recall that the free Lie algebra on a vector 
space, $\L^{\hdot} (W)$, is graded by bracket length. In low degrees, $\L^1 (W)=W$, and the 
Lie bracket identifies $\L^2 (W)$ with $\bigwedge^2 W$.

We are going to make extensive use of the following construction, introduced in 
\cite[Definition 4.2]{MPPS}. The holonomy Lie algebra of a $1$--finite 
$\cdga$ $A$, $\h (A)$, is the quotient of $\L (A^{1*})$ by the Lie ideal generated by
$\im (d^*+\mu^*)$, where $d:A^1 \to A^2$ (respectively $\mu: \bigwedge^2 A^1 \to A^2$) is the 
differential (respectively the product) of the $\cdga$ $A^{\le 2}$, and $(\cdot)^*$ denotes 
vector space duals. This Lie algebra is functorial with respect to $\cdga$ maps, and has the 
following basic property. (A result similar to our theorem below was proved by Bezrukavnikov in
\cite{B}, under the additional assumption that $A^{\hdot}$ is quadratic as a graded algebra; note
that this condition is not satisfied in general by finite $\cdga$ models of spaces, in particular by
the  models $A^{\hdot}(0, \G)$.)

\begin{theorem}
\label{thm:malhol}
If $A$ is a $1$--finite $1$--model of a connected CW-space $M$, 
then $\m (\pi_1(M))$ is isomorphic to the $\lcs$ completion of $\h (A)$, as filtered Lie algebras. 
\end{theorem}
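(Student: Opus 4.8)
The plan is to establish the isomorphism $\m(\pi_1(M)) \cong \widehat{\h(A)}$ by factoring through an intermediate object that connects the topological side (the Malcev Lie algebra) with the infinitesimal side (the holonomy Lie algebra of the model). Recall that the Malcev Lie algebra of $\pi := \pi_1(M)$ depends only on the $1$-minimal model of $M$, which in turn depends only on the $1$-minimal model of the DeRham $\cdga$ $\Omega^{\hdot}(M)$. Since $A$ is a $1$-model of $M$, it shares the same Sullivan $1$-minimal model as $\Omega^{\hdot}(M)$; hence the first step is to reduce the entire statement to a functorial construction depending only on the $\cdga$-quasi-isomorphism type of the truncation $A^{\le 2}$, or more precisely on the $1$-minimal model $\rho: (\M, d) \to A$.

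The heart of the argument is then a purely $\cdga$-theoretic identification: for any $1$-finite $\cdga$ $A$, the holonomy Lie algebra $\h(A)$ (the quotient of $\L(A^{1*})$ by the ideal generated by $\im(d^* + \mu^*)$) has its $\lcs$ completion isomorphic, as a filtered Lie algebra, to the Lie algebra of primitives in the Malcev completion dual to the $1$-minimal model. Concretely, I would use the standard duality between degree-$1$ Sullivan algebras and pronilpotent Lie algebras: the dual of the $1$-minimal model construction, assembled stage by stage, produces a filtered Lie algebra, and one checks that the defining relations of this Lie algebra are exactly the transpose $d^* + \mu^*$ of the quadratic-linear structure maps $d: A^1 \to A^2$ and $\mu: \bigwedge^2 A^1 \to A^2$. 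This matching of relations — the linear part coming from $d^*$ and the bracket-quadratic part from $\mu^*$ — is precisely what the definition of $\h(A)$ encodes, so the completed holonomy Lie algebra and the Malcev Lie algebra agree.

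The main obstacle I anticipate is controlling the \emph{filtrations} compatibly, not merely the underlying Lie algebras. The holonomy Lie algebra $\h(A)$ carries the $\lcs$ filtration inherited from the free Lie algebra $\L(A^{1*})$, whereas $\m(\pi)$ carries the filtration induced from powers of the augmentation ideal of the completed group ring, which a priori is the intrinsic lower central series filtration of the Malcev Lie algebra. I would need to verify that the isomorphism at the level of associated graded objects, or at each finite nilpotent quotient, respects these filtrations — equivalently, that the relations $\im(d^* + \mu^*)$ generate a filtered Lie ideal whose completion matches the filtration coming from the Sullivan tower. Since the relations are not homogeneous in bracket length (there is both a linear part from $d^*$ and a quadratic part from $\mu^*$), the filtration on $\h(A)$ is genuinely the $\lcs$ filtration rather than a grading, and one must check that completing with respect to $\lcs$ is compatible with the inverse-limit structure of the $1$-minimal model tower.

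Provided this filtration bookkeeping is handled, the argument closes by functoriality and the known case: the construction $A \mapsto \widehat{\h(A)}$ is functorial in $A$, it depends only on $A^{\le 2}$, and it is invariant under $1$-quasi-isomorphism (because the $1$-minimal model is). Therefore I would verify invariance under quasi-isomorphism of $\cdga$s directly — this is where the comparison with the $1$-minimal model $\M$ pays off, since $\h(\M)$ maps to $\h(A)$ and the induced map on $\lcs$ completions is an isomorphism when $\rho$ is a $1$-quasi-isomorphism. Combining this invariance with the identification of $\widehat{\h(\M)}$ with $\m(\pi)$ via Malcev duality for the minimal model yields the desired isomorphism of filtered Lie algebras, completing the proof.
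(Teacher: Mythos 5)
Your high-level strategy---reduce to the Sullivan $1$--minimal model $\M$ of $A$ and invoke the duality between degree-one minimal Sullivan algebras and pronilpotent Lie algebras---is a legitimate alternative to the paper's route, which instead runs through Chen--Hain iterated integrals, the bar construction, Adams' cobar duality, and Quillen's Appendix A. But as written, your proposal has genuine gaps at exactly the two points where all the work lies. The step you describe as ``one checks that the defining relations of this Lie algebra are exactly the transpose $d^*+\mu^*$'' is not a check: it \emph{is} the theorem. Sullivan duality gives $\m(\pi)\cong\varprojlim_k L_k$, where $L_k$ is the finite-dimensional nilpotent Lie algebra dual to the $k$-th stage of the minimal model tower; that tower has infinitely many stages, each contributing new degree-one generators to $\M$, whereas $\h(A)$ has a single fixed presentation with inhomogeneous (linear-plus-quadratic) relations. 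Identifying the inverse limit of the tower duals, carrying its tower filtration, with the $\lcs$ completion of the finitely presented Lie algebra $\h(A)$, carrying its $\lcs$ filtration, is precisely what must be proven, and your sketch offers no mechanism for the induction over the tower. Note also that $\h(\M)$ is not even covered by the definition of the holonomy Lie algebra you are using: $\M$ is in general not $1$--finite ($\M^1$ and $\M^2$ are typically infinite-dimensional, e.g.\ already for free fundamental groups), so only a continuous-dual version of your ``Malcev duality for the minimal model'' makes sense, and you would have to set that up. The paper gets this identification by entirely different means: Hain's integration isomorphism $\widehat{\k\pi}\cong H^0B(A)^*$, the cobar duality identifying $H^0B(A)^*$ with $\widehat{U}\mathfrak{q}(A)$ for the finitely presented Lie algebra $\mathfrak{q}(A)\cong\h(A)$, and then Quillen's A3.9--A3.11 to show $\widehat{\h}\to P\widehat{U}\h$ is a filtered isomorphism---the last step being exactly the filtration bookkeeping you flag as the ``main obstacle'' but never resolve.

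The second gap is that your invariance step is circular. You assert that $A\mapsto\widehat{\h(A)}$ is a $1$-quasi-isomorphism invariant ``because the $1$-minimal model is''; but invariance of the minimal model tells you nothing about $\widehat{\h}$ unless you already know that $\widehat{\h(A)}$ is determined by the minimal model, which is the statement being proven. You then propose to verify invariance ``directly'' via the map induced by $\rho\colon\M\to A$ (incidentally, functoriality runs the other way: a $\cdga$ map $\M\to A$ induces $\h(A)\to\h(\M)$), but you give no argument for why this map completes to a filtered isomorphism. The paper's counterpart of this step is concrete: an augmented $1$-equivalence induces a filtered isomorphism on $H^0B(\cdot)^*$, proved by an Eilenberg--Moore spectral sequence argument, after first replacing $A^{\le 2}$ by the finite-type sub-$\cdga$ $\k\cdot 1\oplus A^1\oplus(\im(d)+\im(\mu))$---a reduction you also need, since $1$-finiteness bounds only $\dim A^1$, not $\dim A^2$, and some finiteness is required before dualizing. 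Without substitutes for these two ingredients, the proposal is a plan rather than a proof.
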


\begin{proof}
Our approach is based on a key result obtained by Chen in \cite{Ch} and refined by Hain in \cite{Ha1}.
This result provides the following description for the Malcev completion of $\pi:=\pi_1(M)$, over a
characteristic zero field $\k$, in terms of iterated integrals and bar constructions.

Consider the complete Hopf algebra $\widehat{\k \pi}$, where the completion is taken with respect to 
the powers of the augmentation ideal of the group ring $\k \pi$. The complete Lie algebra $\m (\pi)$
is the Lie algebra of primitives, $P\widehat{\k \pi}$, endowed with the induced filtration, defined by
Quillen in \cite[Appendix A]{Q}. On the other hand, let $B^{\hdot}(A)$ be the differential graded Hopf algebra
obtained by applying the bar functor to the augmented $\cdga$ $A^{\hdot}$, where the augmentation sends
$A^+$ to $0$ and is the identity on $A^0=\k \cdot 1$; see e.g. \cite[\S 1.1]{Ha1}. The dual Hopf algebra,
$H^0 B(A)^*= \Hom_{\k} (H^0 B(A), \k)$, is a complete Hopf algebra, with filtration induced from the 
bar filtration of $H^0 B(A)$; see \cite[\S 2.4]{Ha1}. 

Next, let $f:A' \to A''$ be an augmented $\cdga$ map inducing an isomorphism in $H^i$ for $i\le 1$ and a
monomorphism in $H^2$ (for short, $f$ is an augmented $1$--equivalence). If $H^0 (A')=\k \cdot 1$,
we claim that the induced map, $H^0 B(f)^* \colon H^0 B(A'')^* \to H^0 B(A')^*$, is a filtered isomorphism.
Indeed, a standard argument based on the Eilenberg--Moore spectral sequence (like in Proposition 1.1.1 from \cite{Ha1})
shows that $H^0 B(f)$ induces an isomorphism at the associated graded level, with respect to the bar filtrations,
which clearly implies our assertion. The fact that $A^{\hdot}$ and $\Omega^{\hdot} (M)$ have the same Sullivan
$1$--minimal model, $\mathcal{M}^{\hdot}$, implies by rational homotopy theory \cite{S} the existence of two
augmented $1$--equivalences, $\mathcal{M}^{\hdot} \to A^{\hdot}$ and $\mathcal{M}^{\hdot} \to \Omega^{\hdot} (M)$.
Here, both $A^{\hdot}$ and $\mathcal{M}^{\hdot}$ are canonically augmented, as above, since 
$A^0=\mathcal{M}^0 =\k \cdot 1$, and the augmentation of $\Omega^{\hdot} (M)$ is induced by the basepoint 
chosen for $\pi_1 (M)$, as in \cite{Ha1}.

It follows from \cite[Corollary 2.4.5]{Ha1} that integration induces an isomorphism between $\widehat{\k \pi}$
and $H^0 B(A)^*$, as complete Hopf algebras. This leads to the aforementioned description of the Malcev Lie algebra:
$\m (\pi) \simeq PH^0 B(A)^*$, as complete Lie algebras.

Now, we claim that we may assume that $A^{\hdot}$ is of finite type, i.e., all graded pieces are finite-dimensional.
Indeed, the canonical $\cdga$ projection, $A^{\hdot} \surj A^{\le 2}$, is clearly a $1$--equivalence. Hence, $A^{\le 2}$
is also a $1$--model of $M$, by \cite{S}. It is equally easy to check that 
$\iota : \k \cdot 1 \oplus A^1 \oplus (\im (d)+\im (\mu)) \inj A^{\le 2}$ is a $\cdga$ inclusion and a $1$--equivalence. 
Therefore, we may replace $A^{\le 2}$ by the above finite type sub--$\cdga$, without changing the holonomy Lie algebra,
as claimed.

We may thus consider the dual cocommutative differential graded coalgebra, $A_{\hdot}:=A^{\hdot \, *}$. By the 
standard duality between the bar construction for $\cdga$'s and the Adams cobar construction $C$ for 
cocommutative differential graded coalgebras \cite{Ad}, the complete Hopf algebras $H^0 B(A^{\hdot})^*$ and 
$\widehat{H_0} C(A_{\hdot})$ are isomorphic. In concrete terms, the Hopf algebra $H_0 C(A_{\hdot})$ is easily
identified with the quotient of the primitively generated tensorial Hopf algebra on $A_1$, by the two-sided Hopf ideal
generated by $\im (-d^* +\mu^*)$, and the completion is taken with respect to the descending filtration 
induced by tensor length. 

Denote by $\mathfrak{q}(A)$ the quotient of the free Lie algebra $\L (A_{1})$ by the Lie ideal generated by 
$\im (-d^* +\mu^*)$. The above discussion shows that the complete Hopf algebras $H^0 B(A)^*$ and
$\widehat{U} \mathfrak{q}(A)$ are isomorphic, where $\widehat{U}$ is Quillen's completed universal enveloping 
algebra functor from \cite[Appendix A]{Q}. 

Plainly, $-\id \colon A_1 \to A_1$ induces an isomorphism between the Lie algebras $\mathfrak{q}(A)$ and
$\mathfrak{h}(A)$. We infer that $\m (\pi) \simeq P\widehat{U} \mathfrak{h}(A)$, as complete Lie algebras.

Finally, let $\h$ be a Lie algebra, and consider the canonical Lie homomorphism from \cite[Appendix A]{Q},
$\kappa \colon \h \to P\widehat{U} \h$. By \cite[A3.9 and A3.11]{Q}, $\kappa$ sends the $\lcs$ filtration of $\h$
into the Malcev filtration of $P\widehat{U} \h$, inducing an isomorphism at the associated graded level. Passing to
completions, we infer that $\widehat{\kappa} : \widehat{\h} \to P\widehat{U} \h$ is a filtered Lie isomorphism.
We conclude that $\m (\pi) \simeq \widehat{\h (A)}$, as filtered Lie algebras, thus finishing our proof.
\end{proof}

When $M=F(g, \G)$ and $A=A(g, \G)$, set $L(g, \G):=\h (A(g, \G))$. We will denote, for $g\ge 0$, the basis dual to 
$\{G_{ij} \}_{ij\, \in \se}$ and $\{\omega_i\}_{i\in\sv}$ by $\{C_{ij}\}_{ij\, \in \se}$ and 
$\{z_i\}_{i\in\sv}$ respectively. For $g\geq 1$, the basis dual to 
$\{ x_i^s, y_i^s \mid 1\le i\le n, 1\le s\le g \}$ will be denoted $\{ a^s_i, b^s_i\}$.

\begin{prop}
\label{prop:mal0}
The Malcev Lie algebra $\m (P(0, \G))$ is isomorphic to the $\lcs$ completion of $L(0, \G)$, 
where the Lie algebra $L(0, \G)$ is the quotient of the free Lie algebra on 
$\{ C_{ij}\}_{ij\, \in \se}$ by the relations
\begin{equation}
\label{eq:m0l}
\sum_{j,\,ij\in\se} C_{ij} \quad (i\in \sv)
\end{equation}
\begin{equation}
\label{eq:m0q1}
[C_{ij},C_{kl}] \quad (ij, kl\, \in \se)
\end{equation}
\begin{equation}
\label{eq:m0q2}
[C_{ij},C_{jk}] \quad (ij, jk\, \in \se\mbox{ \em and }\mbox{ik}\not\in\se)
\end{equation}
\begin{equation}
\label{eq:m0q3}
[C_{ij}+C_{jk},C_{ik}] \quad (ij, jk, ik\, \in \se )
\end{equation}
In particular, the group $P(0, \G)$ is always $1$--formal.
\end{prop}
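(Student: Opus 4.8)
The plan is to apply Theorem \ref{thm:malhol} to the Orlik--Solomon model $A = A(0,\G)$, which is a $1$--finite $1$--model of $M = F(0,\G)$; this identifies $\m(P(0,\G))$ with the $\lcs$ completion of the holonomy Lie algebra $\h(A(0,\G)) = L(0,\G)$, so that everything reduces to reading off a presentation of $\h(A)$ from the explicit $\cdga$ $A^{\le 2}(0,\G)$ recalled before Lemma \ref{lem:prel}. The key simplification in genus $0$ is that $H^1(\Sigma_0) = 0$, so $A^1_1 = 0$ and hence $A^1 = G = \spn\{G_{ij}\}$, with dual basis $\{C_{ij}\}$, while $A^2 = A^2_2 \oplus A^2_4$ with $A^2_2 = H^2(\Sigma_0^{\sv}) = \bigoplus_{i\in\sv}\C\,\omega_i$ and $A^2_4 = \OS^2(\A_\G)$. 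On generators the structure maps are $d(G_{ij}) = \omega_i + \omega_j \in A^2_2$ and $\mu\colon \bigwedge^2 G \surj \OS^2(\A_\G)$ the quotient map, the latter killing exactly the span of the triangle relations $r_{ijk} = G_{jk}\wedge G_{ik} - G_{ij}\wedge G_{ik} + G_{ij}\wedge G_{jk}$, one for each $K_3\inj\G$ on vertices $i,j,k$.

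By the definition of $\h(A)$ from \cite[Definition 4.2]{MPPS}, the next step is to compute $\im(d^* + \mu^*)\subseteq \L^1 \oplus \L^2$ and to factor it out as a Lie ideal, where $\L^2$ is identified with $\bigwedge^2 G^*$ via the bracket. Dualizing the splitting $A^2 = A^2_2\oplus A^2_4$, I would evaluate $d^*+\mu^*$ separately on the dual basis $\{z_i\}$ of $A^{2*}_2$ and on $A^{2*}_4 = \OS^2(\A_\G)^*$. Since $A^1 = G$, the product $\mu$ has no component in $A^2_2$, so $\mu^*(z_i)=0$ and $d^*(z_i) = \sum_{j,\,ij\in\se} C_{ij}$, which are exactly the linear relations \eqref{eq:m0l}. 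Dually, $d$ has no component in $A^2_4$, so on $A^{2*}_4$ one has $d^* = 0$, while $\mu^*\colon \OS^2(\A_\G)^* \inj \bigwedge^2 G^* = \L^2$ is injective with image the annihilator $(\ker\mu)^\perp$; these are the quadratic relations. Thus $L(0,\G)$ is $\L(\{C_{ij}\})$ modulo the Lie ideal generated by \eqref{eq:m0l} together with the quadratic subspace $(\ker\mu)^\perp \subseteq \L^2$.

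What remains, and this is the main bookkeeping step, is to identify $(\ker\mu)^\perp$ with the span of \eqref{eq:m0q1}--\eqref{eq:m0q3}. Here I would organize the basis $\{[C_e, C_{e'}]\}$ of $\L^2$ according to how the edges $e, e'$ meet: (a) disjoint (four distinct vertices), (b) sharing one vertex with the third edge \emph{absent}, and (c) sharing one vertex with the third edge \emph{present}, i.e. lying in a common $K_3$. Each $r_{ijk}$ is a combination of type--(c) products inside a single triangle, and a given type--(c) pair lies in a unique triangle, so the $r_{ijk}$ have pairwise disjoint supports and $\ker\mu = \bigoplus_{K_3\inj\G}\C\,r_{ijk}$. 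Consequently $(\ker\mu)^\perp$ splits as the whole type--(a) space, the whole type--(b) space, and, inside each triangle's three--dimensional type--(c) space, the two--dimensional hyperplane orthogonal to $r_{ijk}$. The type--(a) and type--(b) brackets are precisely \eqref{eq:m0q1} and \eqref{eq:m0q2} (each is supported on a single product $G_e\wedge G_{e'}$ that appears in no $r_{ijk}$, so it annihilates every triangle relation automatically), while a direct pairing check shows $[C_{ij}+C_{jk}, C_{ik}]$ lies in the annihilator, and letting the ``middle'' vertex run over all three choices spans exactly the required two--dimensional space. This yields the asserted presentation of $L(0,\G)$.

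Finally, for $1$--formality I note that this presentation has degree $1$ generators, the linear relations \eqref{eq:m0l}, and the quadratic relations \eqref{eq:m0q1}--\eqref{eq:m0q3}. Using \eqref{eq:m0l} to eliminate generators, i.e. replacing $\{C_{ij}\}$ by a basis of its quotient by the span of the linear relations, turns this into a presentation with degree $1$ generators and purely degree $2$ relations, since substituting linear expressions into quadratic relations keeps them homogeneous of bracket length $2$. Hence $L(0,\G)$ is a quadratic Lie algebra, and since $\m(P(0,\G)) \simeq \widehat{L(0,\G)}$ is the $\lcs$ completion of a quadratic Lie algebra, $P(0,\G)$ is $1$--formal.
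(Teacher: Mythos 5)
Your proposal is correct and takes essentially the same approach as the paper: both identify $\m(P(0,\G))$ with the $\lcs$ completion of $\h(A(0,\G))$ via Theorem \ref{thm:malhol} and then dualize the structure maps $d$ and $\mu$ of the genus--$0$ model $A^{\le 2}(0,\G)$, your annihilator-of-$\ker\mu$ bookkeeping being a coordinate-free version of the paper's dual-basis table, with your per-triangle spanning check matching the paper's observation that the two tabulated triangle relations yield all orderings in \eqref{eq:m0q3}. Your explicit generator-elimination argument for $1$--formality is a sound way to justify the final claim, which the paper leaves implicit (compare also Remark \ref{rk:w1}).
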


\begin{proof}
We consider the following canonical basis in $(A^2)^*$:
$$ \{z_i\}_{i\in\sv}\cup \{C_{ij}\wedge C_{kl}\}_{ij,kl\in\se}
\cup \{C_{ij}\wedge C_{jk}\}_{ik\notin\se}\cup
\{C_{ij}\wedge C_{ik},C_{ij}\wedge C_{jk}\}_{ij,ik,jk\in\se} $$ 
(in the product $C_{ij}\wedge C_{kl}$ we take $j>i<k<l, j\ne k,l$ and in the last set
we take $i<j<k$, see \cite{B}). Dualizing $d$ and $\mu$, where
$$ dG_{ij}=\omega_i+\omega_j\, , \, \mu(G_{ik}\wedge G_{jk})=
  G_{ij}\wedge G_{jk}-G_{ij}\wedge G_{ik} \, ,$$
we obtain the defining relations in the last row of the table (in the last two columns 
$i<j<k$):
\begin{center}
\begin{tabular}{|c|c|c|c|c|c|}
\hline
{} & $z_i$     & $C_{ij}\wedge C_{kl}$        & $C_{ij}\wedge C_{jk}$    & 
                 $C_{ij}\wedge C_{ik}$        & $C_{ij}\wedge C_{jk}$    \\ 
{} & $i\in\sv$ & $\mathrm{card}\{i,j,k,l\}=4$ & $(ik\notin\se)$          &
                 $ij,ik,jk\in\se$             & $ij,ik,jk\in\se$         \\
\hline 
$d^*$ & $\Sigma_{j,\,ij\in\se}C_{ij}$ & $0$ & $0$ & $0$ & $0$ \\
\hline
$\mu^*$ & $0$ & $[C_{ij},C_{kl}]$ & $[C_{ij},C_{jk}]$ & $[C_{ij}+C_{jk},C_{ik}]$ &
          $[C_{ij}+C_{ik},C_{jk}]$ \\
\hline
$\Downarrow$ & (\ref{eq:m0l}) & (\ref{eq:m0q1}) & (\ref{eq:m0q2}) & (\ref{eq:m0q3}) &     
               (\ref{eq:m0q3}) \\
\hline
\end{tabular} 
\end{center}
From the last two relations we obtain $[C_{ik}+C_{jk},C_{ij}]=0$, hence the relation 
(\ref{eq:m0q3}), where $i,j,k$ are arbitrarily ordered.  
\end{proof}
 
\begin{remark}
\label{rk:w1} 
By \cite[Corollary 10.3]{M}, if the quasi-projective manifold $M$ has the vanishing property 
in degree $1$ $W_1H^1(M)=0$, then $\pi_1(M)$ is $1$--formal,
where $W_{\hdot}$ denotes Deligne's weight filtration \cite{De2, De3}. According to \cite{De2, De3}, 
$W_1H^1(M)=0$ whenever $M$ admits a smooth compactification $\overline{M}$ with 
$b_1(\overline{M})=0$. Hence, $P(0, \G)$ is actually $1$--formal in this stronger sense. 
\end{remark}

\begin{prop}
\label{prop:malpos}
For $g\ge 1$, the Malcev Lie algebra $\m (P(g, \G))$ is isomorphic to the $\lcs$ completion of 
$L(g, \G)$, where the Lie algebra $L(g, \G)$ is the quotient of the free Lie algebra on 
$\{ a^s_i, b^s_i\}$ by the relations
\begin{equation}
\label{eq:malpos21}
C_{ij}:= [a^s_i, b^s_j]= [a^t_j, b^t_i] \quad (\forall i\ne j\, , \forall s,t)
\end{equation}
\begin{equation}
\label{eq:malpos22}
C_{ij}=0 \quad (ij\, \not\in \se)
\end{equation}
\begin{equation}
\label{eq:malpos23}
[a^s_i, b^t_j]= [a^s_j, b^t_i]=0 \quad (\forall i<j\, , \forall s\ne t)
\end{equation}
\begin{equation}
\label{eq:malpos24}
[a^s_i, a^t_j]= [b^s_i, b^t_j]=0 \quad (\forall i\ne j\, , \forall s,t)
\end{equation}
\begin{equation}
\label{eq:malpos25}
\sum_j C_{ij}= \sum_s [b^s_i, a^s_i] \quad (i\, \in \sv)
\end{equation}
\begin{equation}
\label{eq:malpos26}
[a^s_k,C_{ij}]= [b^s_k,C_{ij}]=0 \quad (\forall k\ne i,j\,, \forall s )
\end{equation}
In particular, $L(g, \G)$ is generated in degree $1$ with relations in degrees $2$ and $3$, and
consequently the group $P(g, \G)$ is always filtered formal. 
\end{prop}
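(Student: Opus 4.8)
The plan is to read everything off Theorem \ref{thm:malhol}, applied to $M=F(g,\G)$ and its Orlik--Solomon model $A=A(g,\G)$, exactly as in the genus $0$ case of Proposition \ref{prop:mal0}. That theorem identifies $\m(P(g,\G))$ with the $\lcs$ completion of the holonomy Lie algebra $\h(A(g,\G))$, so it suffices to present $L(g,\G):=\h(A(g,\G))$ and to record the bracket-degrees of its relations. Recall that $\h(A)$ is the quotient of $\L(A^{1*})$ by the Lie ideal generated by $\im(d^*+\mu^*)$, where $A^{1*}$ carries the basis $\{a_i^s,b_i^s\}_{i,s}\cup\{C_{ij}\}_{ij\in\se}$ dual to $\{x_i^s,y_i^s\}\cup\{G_{ij}\}$. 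Thus the first task is to write the raw presentation on these generators, and the second is to simplify it by Tietze moves down to the stated presentation on $\{a_i^s,b_i^s\}$ alone.

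First I would dualize $d$ and $\mu$ on the truncated $\cdga$ $A^{\le 2}(g,\G)$, whose explicit structure was recalled just before Lemma \ref{lem:prel}, organizing the output in a table indexed by a basis of $(A^2)^{*}=(A^2_2)^{*}\oplus(A^2_3)^{*}\oplus(A^2_4)^{*}$, in the spirit of Proposition \ref{prop:mal0}. Since $d$ sends $A^1$ into $A^2_2$ only, the duals of $A^2_2$ carry both $d^{*}$ and $\mu^{*}$: the dual $z_i$ of $\omega_i$ yields $\sum_j C_{ij}+\sum_s[a_i^s,b_i^s]$, which is relation (\ref{eq:malpos25}), while the duals of the K\"unneth cross terms $x_i^s\otimes x_j^t,\ x_i^s\otimes y_j^t,\ y_i^s\otimes x_j^t,\ y_i^s\otimes y_j^t$ (for $i\neq j$) produce relations (\ref{eq:malpos21})--(\ref{eq:malpos24}); in particular the $x\otimes y$ and $y\otimes x$ terms with equal superscripts along an edge combine the $d^{*}$-output $\pm C_{ij}$ with the $\mu^{*}$-output to give the identification $C_{ij}=[a_i^s,b_j^s]=[a_j^t,b_i^t]$ of (\ref{eq:malpos21}), while for non-edges the same computation (with no $d^{*}$-term) gives (\ref{eq:malpos22}). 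The duals of $A^2_3=A^1_1\otimes G$ modulo its defining relations contribute, through $\mu^{*}$ alone, the relations $[a_k^s,C_{ij}]=[b_k^s,C_{ij}]=0$ ($k\neq i,j$) of (\ref{eq:malpos26}), together with the two further families $[a_i^s+a_j^s,C_{ij}]=[b_i^s+b_j^s,C_{ij}]=0$ coming from the symmetric combinations surviving in the annihilator of the relation subspace. Finally $A^2_4=\OS^2(\A_\G)$ contributes, again through $\mu^{*}$, the very same Orlik--Solomon relations as in genus $0$, namely $[C_{ij},C_{kl}]$, $[C_{ij},C_{jk}]$ for $ik\notin\se$, and $[C_{ij}+C_{jk},C_{ik}]$.

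With the raw presentation in hand, I would use (\ref{eq:malpos21}) to eliminate by a Tietze transformation every generator $C_{ij}$ with $ij\in\se$ (setting, say, $C_{ij}=[a_i^1,b_j^1]$), which leaves a presentation on $\{a_i^s,b_i^s\}$ and turns the remaining $C_{ij}$ into the degree $2$ abbreviations of the statement. It then remains to show that, for $g\ge 1$, the extra $A^2_3$-relations $[a_i^s+a_j^s,C_{ij}]=0$ and all the $A^2_4$-relations are redundant, i.e.\ already lie in the ideal generated by (\ref{eq:malpos21})--(\ref{eq:malpos26}). The mechanism is the Jacobi identity combined with the vanishing relations (\ref{eq:malpos22})--(\ref{eq:malpos24}) and the centrality relations (\ref{eq:malpos26}): writing $C_{kl}=[a_k^s,b_l^s]$ and expanding $[C_{ij},C_{kl}]$ shows it vanishes whenever $\{i,j\}\cap\{k,l\}=\emptyset$, because $C_{ij}$ then commutes with both $a_k^s$ and $b_l^s$ by (\ref{eq:malpos26}); the shared-vertex cases and the symmetric $A^2_3$-relations are reduced, after iterating the same expansion and using (\ref{eq:malpos24}), to brackets of the form $[a_i^s,b_k^t]$ that vanish by (\ref{eq:malpos22})--(\ref{eq:malpos23}).

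This last redundancy step is the main obstacle I anticipate: checking that the $A^2_4$-relations and the symmetric $A^2_3$-relations follow from (\ref{eq:malpos21})--(\ref{eq:malpos26}) is a genuinely multi-step Lie-algebra computation (the ``elimination of redundant relations'' advertised in the introduction), and care is needed to ensure it never invokes the same-vertex brackets $[a_i^s,a_i^t]$, $[a_i^s,b_i^t]$, $[b_i^s,b_i^t]$, which are not set to zero and indeed survive in $L(g,\G)$. Granting the redundancy, the surviving relations are (\ref{eq:malpos21})--(\ref{eq:malpos25}) in bracket-degree $2$ and (\ref{eq:malpos26}) in bracket-degree $3$; since this presentation is homogeneous with respect to bracket length, $L(g,\G)$ is generated in degree $1$ with relations in degrees $2$ and $3$, and $\m(P(g,\G))\simeq\widehat{L(g,\G)}$ exhibits $P(g,\G)$ as filtered formal.
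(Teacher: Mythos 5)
Your route is the same as the paper's: apply Theorem \ref{thm:malhol} to $A=A(g,\G)$, dualize $d$ and $\mu$ on $A^{\le 2}$ to obtain a raw presentation of $\h(A(g,\G))$ (your accounting of which basis vectors of $(A^2)^{*}$ produce which relations agrees with the paper's tables), eliminate the generators $C_{ij}$ using \eqref{eq:malpos21}, and then show that the leftover relations --- the Orlik--Solomon relations $[C_{ij},C_{kl}]$, $[C_{ij},C_{jk}]$ (for $ik\notin\se$), $[C_{ij}+C_{jk},C_{ik}]$, and the symmetric relations $[a_i^s+a_j^s,C_{ij}]=[b_i^s+b_j^s,C_{ij}]=0$ --- are redundant modulo \eqref{eq:malpos21}--\eqref{eq:malpos26}. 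The problem is that this redundancy step, which you explicitly defer (``granting the redundancy''), is the actual content of the proof, and the mechanism you sketch for it is wrong: you assert it follows from the Jacobi identity together with the vanishing relations \eqref{eq:malpos22}--\eqref{eq:malpos24} and the centrality relations \eqref{eq:malpos26}, by reduction to brackets of the form $[a_i^s,b_k^t]$. In fact relation \eqref{eq:malpos25} is indispensable.

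To see this concretely, take $g=1$ and $\G=K_2$, so that \eqref{eq:malpos22}, \eqref{eq:malpos23} and \eqref{eq:malpos26} are vacuous. The Lie homomorphism $\L(a_1,b_1,a_2,b_2)\to\L(a,b)$ sending $a_1,a_2\mapsto a$ and $b_1,b_2\mapsto b$ kills $[a_1,b_2]-[a_2,b_1]$, $[a_1,a_2]$ and $[b_1,b_2]$, i.e.\ all of \eqref{eq:malpos21} and \eqref{eq:malpos24}, yet it sends $[a_1+a_2,C_{12}]=[a_1+a_2,[a_1,b_2]]$ to $2[a,[a,b]]\neq 0$; hence the symmetric relation is \emph{not} a consequence of \eqref{eq:malpos21}--\eqref{eq:malpos24} and \eqref{eq:malpos26}. (This homomorphism does not kill \eqref{eq:malpos25}, consistently with the paper.) The paper derives $[a_i^s+a_j^s,C_{ij}]=0$ by the chain $[a_j^s,C_{ij}]=[a_j^s,\textstyle\sum_k C_{ik}]=[a_j^s,\sum_t[b_i^t,a_i^t]]=[a_j^s,[b_i^s,a_i^s]]=[C_{ij},a_i^s]$, which pivots on \eqref{eq:malpos25} (together with \eqref{eq:malpos26}, \eqref{eq:malpos23}, \eqref{eq:malpos24} and \eqref{eq:malpos21}), and then uses this symmetric relation to dispose of the two shared-vertex Orlik--Solomon relations, which cannot be handled without it. Only the disjoint-support case $[C_{ij},C_{kl}]=0$, which you do argue correctly, follows from \eqref{eq:malpos21} and \eqref{eq:malpos26} alone. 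So your proposal reproduces the bookkeeping part of the paper's proof but omits, and mis-describes, its essential computational core.
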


\begin{proof}
The canonical basis in $(A^2)^*$ contains the list in the proof of Proposition \ref{prop:mal0}
and also (with indices $1\leq i<j\leq n, 1\leq s,t\leq g,k\neq i,j$)
$$ \{a_i^s\otimes a_j^t,a_i^s\otimes b_j^t,b_i^s\otimes a_j^t,b_i^s\otimes b_j^t\}\cup
\{a_k^s\otimes C_{ij},b_k^s\otimes C_{ij},a_i^s\otimes C_{ij},b_i^s\otimes C_{ij}\}. $$
To dualize $d$ and $\mu$, the relevant relations are
$$ \begin{array}{l}
dG_{ij}=\omega_i+\omega_j+\Sigma_s(y_i^s\otimes x_j^s-x_i^s\otimes y_j^s),\\
\mu(x_i^s\wedge y_i^s)=\omega_i, \,\mu(x_i^s\wedge y_j^{\,t})=x_i^s\otimes y_j^{\,t}, \,
  \mu(y_i^s\wedge x_j^{\,t})=y_i^s\otimes x_j^{\,t}\, (i<j),\\
\mu(x_i^s\wedge x_j^{\,t})=x_i^s\otimes x_j^{\,t}, \, 
    \mu(y_i^s\wedge y_j^{\,t})=y_i^s\otimes y_j^{\,t}\, (i<j),\\
\mu(x_i^s\wedge G_{jk})=x_i^s\otimes G_{jk}, \, \mu(y_i^s\wedge G_{jk})=y_i^s\otimes G_{jk},\\
\mu(x_i^s\wedge G_{ij})=x_i^s\otimes G_{ij}=\mu(x_j^s\wedge G_{ij}), \, 
\mu(y_i^s\wedge G_{ij})=y_i^s\otimes G_{ij}=\mu(y_j^s\wedge G_{ij}).  
\end{array}$$
The defining relations are obtained in the last row of the following table. The 
indices in columns 4 and 5 satisfy $i<j<k$ (for any $C_{pq}$ in the table, $pq\in\se$,
and the entries in columns 6 and 7 are to be replaced by $0$ in the second row, when $ij \not\in \se$):

\begin{center}
\begin{tabular}{|c|c|c|c|c|c|}
\hline
0 & 1 & 2 & 3 & 4 & 5 \\
\hline
{} & $z_i$     & $C_{ij}\wedge C_{kl}$        & $C_{ij}\wedge C_{jk}$    & 
                 $C_{ij}\wedge C_{ik}$        &   $C_{ij}\wedge C_{jk}$  \\  
{} & $i\in\sv$ & $\mathrm{card}\{i,j,k,l\}=4$ & $(ik\notin\se)$          &
                 $ij,ik,jk\in\se$             & $ij,ik,jk\in\se$         \\
\hline 
$d^*$ & $\Sigma_{j,\,ij\in\se}C_{ij}$ & $0$ & $0$ & $0$ & $0$ \\
\hline
$\mu^*$ & $\Sigma_s[a_i^s,b_i^s]$  & $[C_{ij},C_{kl}]$        & $[C_{ij},C_{jk}]$ & 
          $[C_{ij}+C_{jk},C_{ik}]$ & $[C_{ij}+C_{ik},C_{jk}]$ \\
\hline
$\Downarrow$ & (\ref{eq:malpos25}) & (\ref{eq:malpos31}) & (\ref{eq:malpos34}) & 
               (\ref{eq:malpos33}) & (\ref{eq:malpos33}) \\
\hline
\end{tabular} 
\end{center}
\begin{center}
\begin{tabular}{|c|c|c|c|c|c|c|c|}
\hline
6 & 7 & 8 & 9 & 10 & 11 & 12 & 13 \\
\hline
   $a_i^s\otimes b_j^t$ & $b_i^s\otimes a_j^t$ & $a_i^s\otimes a_j^t$ &
   $b_i^s\otimes b_j^t$ & $a_k^s\otimes C_{ij}$ & $b_k^s\otimes C_{ij}$ & 
   $a_i^s\otimes C_{ij}$ & $b_i^s\otimes C_{ij}$ \\  
   $i<j$                & $i<j$                & $i<j$                &
   $i<j$                & $k\ne i,j$           & $k\ne i,j$           &
   $i<j$                & $i<j$                \\
\hline 
   $-\delta_{st}C_{ij}$ & $\delta_{st}C_{ij}$ & $0$ & $0$ & $0$ & $0$ & $0$ & $0$ \\
\hline
   $[a_i^s,b_j^t]$  & $[b_i^s,a_j^t]$  & $[a_i^s,a_j^t]$  & $[b_i^s,b_j^t]$ & 
   $[a_k^s,C_{ij}]$ & $[b_k^s,C_{ij}]$ & $[a_i^s+a_j^s,C_{ij}]$ & 
   $[b_i^s+b_j^s,C_{ij}]$ \\ 
\hline
   (\ref{eq:malpos21}-\ref{eq:malpos23})     & (\ref{eq:malpos21}-\ref{eq:malpos23}) & 
   (\ref{eq:malpos24}) & (\ref{eq:malpos24}) & (\ref{eq:malpos26})  & 
   (\ref{eq:malpos26}) & (\ref{eq:malpos32}) & (\ref{eq:malpos32})  \\
\hline
\end{tabular} 
\end{center}

Note that, when $ij \in \se$, in the relations \eqref{eq:malpos21} $C_{ij}$ is the dual of $G_{ij}$.
The relations (\ref{eq:malpos26}) are obtained in columns 
10 and 11 for $ij\in\se$ and, otherwise, are a trivial consequence of (\ref{eq:malpos22}).  
It remains to prove that the relations (\ref{eq:malpos21})-(\ref{eq:malpos26}) imply the 
following list
\begin{equation}
\label{eq:malpos31}
[C_{ij},C_{kl}]=0 \quad (\mbox{if }\mathrm{card}\{i,j,k,l\}=4)
\end{equation}
\begin{equation}
\label{eq:malpos32}
[a^s_i+a^s_j,C_{ij}]=[b^s_i+b^s_j,C_{ij}]=0 \quad (\forall i\ne j\, , \forall s)
\end{equation}
\begin{equation}
\label{eq:malpos33}
[C_{ij}+C_{jk},C_{ik}]=0 \quad (\mbox{if }ij,ik,jk\in \se)
\end{equation}
\begin{equation}
\label{eq:malpos34}
[C_{ij},C_{jk}]=0 \quad (\mbox{if }ij,jk\in \se \mbox{ and \em ik}\notin \se)
\end{equation}

The first relation is obvious:
$$ [C_{ij},C_{kl}]=[C_{ij},[a_k^s,b_l^s]]=0\quad 
     (\mbox{by (\ref{eq:malpos21}) and (\ref{eq:malpos26})})\, . $$

The second equation comes from the equalities
$$\begin{array}{llll}
   [a^s_j,C_{ij}] & = & [a^s_j,\Sigma_kC_{ik}]        & (\mbox{by (\ref{eq:malpos26})}) \\
                  & = & [a^s_j,\Sigma_t[b_i^t,a_i^t]] & (\mbox{by (\ref{eq:malpos25})}) \\
                  & = & [a^s_j,[b_i^s,a_i^s]]         & (\mbox{by (\ref{eq:malpos23}) and \eqref{eq:malpos24}}) \\
                  & = & [C_{ij},a_i^s]                & (\mbox{by (\ref{eq:malpos21}) and \eqref{eq:malpos24}})  
\end{array}$$
(by symmetry, we get $[b^s_i+b^s_j,C_{ij}]=0$). 

Using (\ref{eq:malpos32}), we can finish the proof as follows:
$$\begin{array}{llll}
 [C_{ij}+C_{jk},C_{ik}] & = & [[a_i^s,b_j^s]+[a_k^s,b_j^s],C_{ik}] & 
                              (\mbox{by (\ref{eq:malpos21})}) \\
                        & = & [[a_i^s+a_k^s,b_j^s],C_{ik}]=0       & 
                              (\mbox{by (\ref{eq:malpos26}) and (\ref{eq:malpos32})}) \, ,  
\end{array}$$
and finally \eqref{eq:malpos34} may be established as follows:
$$\begin{array}{llll}
   [C_{ij},C_{jk}] & = & [C_{ij},[a_j^s,b_k^s]]       & (\mbox{by (\ref{eq:malpos21})}) \\
                   & = & [[C_{ij},a_j^s],b_k^s]       & (\mbox{by (\ref{eq:malpos26})}) \\
                   & = & -[[C_{ij},a_i^s],b_k^s]      & (\mbox{by (\ref{eq:malpos32})}) \\
                   & = & -[C_{ij},[a_i^s,b_k^s]]=0    & 
           (\mbox{by (\ref{eq:malpos26}), (\ref{eq:malpos21}) and (\ref{eq:malpos22})})\, .  
\end{array}$$  
\end{proof}

\begin{example}
\label{ex:filtvs1f}
Note that filtered formality is strictly weaker than $1$--formality, as shown by the 
Torelli group in genus $3$, which has cubic, non-$1$-formal Malcev Lie algebra, cf. Hain's 
work from \cite{H}.
\end{example}

\begin{prop}
\label{prop:malformal}
Suppose that either $g\ge 2$, or $g=1$ and $\G$ contains no $K_3$. Then the group $P(g, \G)$ is $1$--formal. 
\end{prop}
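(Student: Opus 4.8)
The plan is to deduce $1$--formality directly from our computation of the Malcev Lie algebra. By Theorem \ref{thm:malhol}, $\m (P(g, \G))$ is the $\lcs$ completion of $L(g, \G)=\h (A(g, \G))$, so by the very definition of $1$--formality it suffices to prove that $L(g, \G)$ is a \emph{quadratic} Lie algebra, i.e. that it admits a presentation with generators in degree $1$ and relations in degree $2$. Proposition \ref{prop:malpos} already gives a presentation on the degree $1$ generators $\{a^s_i, b^s_i\}$ whose relations \eqref{eq:malpos21}--\eqref{eq:malpos25} are quadratic, the only cubic relations being \eqref{eq:malpos26}. Hence everything reduces to showing that, under the hypotheses on $g$ and $\G$, the relations \eqref{eq:malpos26} already lie in the Lie ideal generated by \eqref{eq:malpos21}--\eqref{eq:malpos25}. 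Throughout, $C_{ij}$ abbreviates the quadratic element $[a^t_i, b^t_j]$ (well defined modulo the quadratic relations, by \eqref{eq:malpos21}), and I would work modulo that ideal using only the Jacobi identity.

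First I would treat the case $g\ge 2$. The key point is that $C_{ij}$ may be represented as $[a^t_i, b^t_j]$ for \emph{any} colour $t$, so for a fixed $s$ I choose $t\ne s$. Expanding $[a^s_k, C_{ij}]=[a^s_k, [a^t_i, b^t_j]]$ by Jacobi produces the two terms $[[a^s_k, a^t_i], b^t_j]$ and $[a^t_i, [a^s_k, b^t_j]]$; the first vanishes by \eqref{eq:malpos24} (as $k\ne i$) and the second vanishes by \eqref{eq:malpos23} (as $s\ne t$ and $k\ne j$). The companion relation $[b^s_k, C_{ij}]=0$ follows symmetrically. Thus for $g\ge 2$ every relation \eqref{eq:malpos26} is redundant and $L(g, \G)$ is quadratic.

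Next I would handle $g=1$ with $\G$ containing no $K_3$; here the single colour rules out the trick above, and the triangle--free hypothesis is exactly what replaces it. Dropping superscripts, I would expand $[a_k, C_{ij}]$ in the two representations $C_{ij}=[a_i, b_j]=[a_j, b_i]$: Jacobi together with \eqref{eq:malpos24} reduces the first to $[a_i, [a_k, b_j]]$ and the second to $[a_j, [a_k, b_i]]$. By \eqref{eq:malpos22}, the former vanishes when $jk\notin\se$ and the latter when $ik\notin\se$. Since $ij\in\se$, were both $ik$ and $jk$ edges the vertices $i,j,k$ would span a $K_3$; as this is excluded, one of the two expansions always applies, giving $[a_k, C_{ij}]=0$, and $[b_k, C_{ij}]=0$ follows symmetrically.

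The main obstacle is precisely the genus $1$ case: the clean colour argument is unavailable, and one must exploit both presentations of $C_{ij}$ together with the combinatorics of $\G$. This also pinpoints the sharpness of Theorem \ref{thm:main2}: when $g=1$ and $i,j,k$ span a triangle, both $ik, jk\in\se$, neither expansion collapses, and \eqref{eq:malpos26} survives as a genuine cubic relation --- so the redundancy argument breaks down exactly where $1$--formality is known to fail. One bookkeeping point worth verifying is that the generators $C_{ij}$ for $ij\in\se$ have indeed been eliminated in Proposition \ref{prop:malpos} (via \eqref{eq:malpos21}), so that the generating set is genuinely $\{a^s_i, b^s_i\}$ and every relation invoked is homogeneous of bracket length $2$; granting this, the two cases above exhibit a quadratic presentation of $L(g, \G)$, and hence the $1$--formality of $P(g, \G)$.
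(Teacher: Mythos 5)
Your proposal is correct and takes essentially the same route as the paper: both arguments show that the cubic relations \eqref{eq:malpos26} lie in the ideal generated by the quadratic relations \eqref{eq:malpos21}--\eqref{eq:malpos25}, using a second colour $t\ne s$ in the representation $C_{ij}=[a^t_i,b^t_j]$ when $g\ge 2$, and the two representations $C_{ij}=[a_i,b_j]=[a_j,b_i]$ together with triangle-freeness (via \eqref{eq:malpos22}) when $g=1$. You merely spell out the Jacobi expansions and bookkeeping that the paper's proof leaves implicit.
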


\begin{proof}
The cubic relations (\ref{eq:malpos26}) follow from the quadratic relations: if $g\geq 2$, 
take $t\ne s$; then
$$ [a^s_k,C_{ij}]=[a^s_k,[a_i^t,b_j^t]]=0 \quad (\mbox{by (\ref{eq:malpos21}), 
          (\ref{eq:malpos23}) and  (\ref{eq:malpos24})}); $$
if $g=1$ and, say, $ik\notin \se $, we find
$$ [a^1_k,C_{ij}]=[a^1_k,[a_j^1,b_i^1]]=0 \quad (\mbox{by (\ref{eq:malpos21}), 
          (\ref{eq:malpos22}) and  (\ref{eq:malpos24})}). $$        
\end{proof}

\begin{prop}
\label{prop:malnot1f}
If  $g=1$ and $\G$ contains a $K_3$ subgraph, then the group $P(1, \G)$ is not $1$--formal. 
\end{prop}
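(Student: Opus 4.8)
The plan is to show that $\m(P(1,\G))$ fails to be isomorphic to the $\lcs$ completion of any quadratic Lie algebra when $\G\supseteq K_3$, by exhibiting an \emph{obstruction to $1$-formality} that survives precisely in this case. By Proposition \ref{prop:malpos}, the Lie algebra $L(1,\G)$ already carries a genuinely cubic relation, namely \eqref{eq:malpos26}, $[a_k,C_{ij}]=[b_k,C_{ij}]=0$ for $k\ne i,j$ (dropping the single superscript $s=1$ in genus $1$); and Proposition \ref{prop:malformal} shows these cubic relations are \emph{consequences} of the quadratic ones exactly when $g\ge 2$ or when $\G$ has no $K_3$. So the natural strategy is to prove that for $g=1$ with an embedded triangle, the relation $[a_k,C_{ij}]=0$ genuinely fails to follow from the quadratic part, witnessing that the associated graded is not quadratically presented, which by standard Malcev theory obstructs $1$-formality.

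First I would reduce to the smallest case $\G=K_3$ on vertices $\{1,2,3\}$, using functoriality: a graph inclusion $K_3\inj\G$ induces a $\cdga$ map and hence a Lie algebra map $L(1,K_3)\to L(1,\G)$ respecting the $\lcs$ gradings and the holonomy structure, and a retraction $\proj_{\{1,2,3\}}$ in the other direction, so non-$1$-formality for $K_3$ should propagate. For $K_3$ the relations \eqref{eq:malpos21}--\eqref{eq:malpos26} become a small explicit presentation with generators $a_1,b_1,a_2,b_2,a_3,b_3$ and a handful of brackets $C_{ij}=[a_i,b_j]=[a_j,b_i]$ for $ij\in\se=\{12,13,23\}$. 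The key step is then to compute the \emph{quadratic closure} $\overline{L}$ of $L(1,K_3)$ — the quotient of the free Lie algebra by only the degree-$2$ relations — and compare its lower central series quotients, in particular $\gr_3$, against those of $L(1,K_3)$ itself. The claim to establish is that $\dim\gr_3 L(1,K_3) < \dim\gr_3\overline{L}$, the discrepancy being exactly the independent cubic relations \eqref{eq:malpos26} that Proposition \ref{prop:malformal} could \emph{not} derive quadratically in this genus-$1$, $K_3$ situation.

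The cleanest way to turn a dimension count into a statement about $\m(P(1,K_3))$ is to invoke the numerical characterization of $1$-formality via the holonomy Lie algebra: since $A=A(1,K_3)$ is a $1$-model and $\m(\pi)\simeq\widehat{L(1,K_3)}$ by Theorem \ref{thm:malhol}, the group is $1$-formal iff $L(1,K_3)$ is isomorphic (as graded Lie algebra, up to the $\lcs$ completion) to a \emph{quadratic} Lie algebra, equivalently iff the holonomy Lie algebra $\h(A)$ coincides with the quadratic-dual/quadratic-closure algebra in all degrees. Concretely I would pick the single explicit cubic element $\zeta=[a_3,C_{12}]=[a_3,[a_1,b_2]]$ and argue that $\zeta$ is nonzero in $\overline{L}$ (the quadratic algebra) while it is a relation in $L(1,K_3)$; the Jacobi-identity manipulations available quadratically — the genus-$1$ analogues of the steps in Proposition \ref{prop:malformal} — all fail because the superscript trick requires either a second index $t\ne s$ (needs $g\ge 2$) or a non-edge (needs $\G\ne K_3$), and neither is available here. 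This incompatibility of $\gr L$ with quadratic presentation is the obstruction.

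The main obstacle will be the third step: rigorously certifying that $\zeta\ne 0$ in the quadratic closure $\overline{L}$, i.e. that no sequence of Jacobi identities using only the quadratic relations \eqref{eq:malpos21}--\eqref{eq:malpos25} kills $[a_3,[a_1,b_2]]$. A clean way to do this is to produce an explicit finite-dimensional nilpotent Lie algebra representation (or a linear functional on degree-$3$ words) that satisfies all the quadratic relations yet evaluates nontrivially on $\zeta$, thereby separating it; constructing such a witness — or equivalently computing a Hilbert-series / Gröbner-type lower bound for $\dim\gr_3\overline{L}$ — is the genuinely computational heart of the argument, and the one place where the genus-$1$, $K_3$ hypotheses must be used decisively rather than formally. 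Everything else (functorial reduction to $K_3$, the equivalence of $1$-formality with quadraticity of $\h(A)$, and the bookkeeping of which relations are cubic) is bookkeeping built on the results already proved above.
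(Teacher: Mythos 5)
Your overall architecture matches the paper's first step: reduce to $\G=K_3$ by functoriality of the presentations from Proposition \ref{prop:malpos}, then establish non-$1$-formality in that single case. Two remarks on the reduction. First, your functoriality is stated backwards: the graph inclusion $K_3\inj\G$ (equivalently, the projection $F(1,\G)\to F(1,K_3)$) induces a Lie algebra \emph{surjection} $L(1,\G)\surj L(1,K_3)$, not a map $L(1,K_3)\to L(1,\G)$; and there is no honest Lie algebra retraction in the other direction, since the linear injection $f_{\dagger}$ used in the paper preserves only the cubic relations \eqref{eq:malpos26} and does \emph{not} carry relation \eqref{eq:malpos25} for a vertex of $K_3$ to a relation of $L(1,\G)$ (a vertex of $K_3$ may have extra neighbours in $\G$). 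The paper therefore argues at the level of free Lie algebras, combining the surjection $f_*$ (which preserves all graded pieces of the defining ideals) with $f_{\dagger}$ (which preserves the cubic relations) to transfer quadraticity of $L(1,\G)$ down to quadraticity of $L(1,K_3)$. Your sketch of this step is muddled but repairable, and your formality criterion itself is sound: by Theorem \ref{thm:malhol} and the filtered formality in Proposition \ref{prop:malpos}, $1$-formality of $P(1,\G)$ forces the graded Lie algebra $L(1,\G)$ to be quadratic, so a dimension discrepancy in degree $3$ between $L(1,K_3)$ and its quadratic closure would indeed contradict it.

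The genuine gap is the base case, which is the entire substance of the statement. Having reduced to $K_3$, you propose to certify that $\zeta=[a_3,[a_1,b_2]]$ is nonzero in the quadratic closure of $L(1,K_3)$, and you correctly identify this as ``the genuinely computational heart of the argument'' --- but you never carry it out: no witness representation satisfying \eqref{eq:malpos21}--\eqref{eq:malpos25} with $\zeta\ne 0$ is constructed, no Hilbert-series or Gr\"obner bound is computed. As written, the proposal proves non-$1$-formality conditionally on exactly the assertion that needed proof. The paper sidesteps this computation entirely: it quotes \cite[Example 10.1]{DPS09}, where the non-$1$-formality of $P(1,K_3)=\pi_1(F(\Sigma_1,K_3))$ is already established by cohomology jump loci methods rather than by a presentation-level count. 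So your route is viable in principle and would even yield a more self-contained proof than the paper's, but until the separating witness (or dimension count) for $\zeta$ is actually produced, the argument is incomplete at its decisive step.
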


\begin{proof}
When $g\ge 1$ and $f: \G' \inj \G$ is arbitrary, note that 
$f_* : H_1(\Sigma_g^{\sv}) \surj H_1(\Sigma_g^{\sv'})$ extends to a graded Lie surjection,  
$f_* : \L^{\hdot}(H_1(\Sigma_g^{\sv})) \surj \L^{\hdot}(H_1(\Sigma_g^{\sv'}))$,
which preserves the graded parts of the defining Lie ideals \eqref{eq:malpos21}--
\eqref{eq:malpos26}. Furthermore, the canonical injection 
$f_{\dagger} : H_1(\Sigma_g^{\sv'}) \inj H_1(\Sigma_g^{\sv})$ extends to a graded Lie
monomorphism, 
$f_{\dagger} : \L^{\hdot}(H_1(\Sigma_g^{\sv'})) \inj \L^{\hdot}(H_1(\Sigma_g^{\sv}))$,
which preserves the cubic relations \eqref{eq:malpos26}. Therefore, the $1$--formality of 
$P(1, \G)$ would imply the $1$--formality of $P(1, K_3)$, in contradiction with 
\cite[Example 10.1]{DPS09}.
\end{proof}

\begin{remark}
\label{rk:notprod}
It follows from Proposition \ref{prop:res2} and \cite[Proposition 5.6]{M10} that, when 
$g\ge 2$, $\RR^1_1(A^{\hdot}(g, \G))= \RR^1_1(H^{\hdot}(\Sigma_g^{\sv}))$, for any graph 
$\G$. Nevertheless, $\m (P(g, \G)) \not\simeq \m (\pi_1(\Sigma_g^{\sv}))$, if 
$\se \ne \emptyset$. Indeed, assuming the contrary we infer from \cite{S} that the spaces 
$F(g, \G)$ and $\Sigma_g^{\sv}$ have isomorphic decomposable subspaces in the cohomology 
ring, in degree $2$: $DH^2 (F(g, \G)) \simeq DH^2 (\Sigma_g^{\sv})$. Plainly, 
$DH^2 (\Sigma_g^{\sv})= H^2 (\Sigma_g^{\sv})$. The description of the Orlik--Solomon model 
$A^{\hdot}(g, \G)$ from Section \ref{sec:pencils} readily implies that 
$DH^2 (F(g, \G))= H^2 (\Sigma_g^{\sv})/dG$. By Lemma \ref{lem:prel}\eqref{pr1}, the above 
two vector spaces $DH^2$ have different dimensions if $\se \ne \emptyset$, a contradiction.
\end{remark}


\section{Non-abelian representation varieties and jump loci}
\label{sec:sl2}

Finally, we analyze germs at $1$ of rank $2$ non-abelian representation varieties and their 
degree one topological Green--Lazarsfeld loci for partial pure braid groups, via admissible 
maps and Orlik--Solomon models, and we prove Theorem \ref{thm:main3}. In this section, $\GG =\SL_2(\C)$ 
or its standard Borel subgroup, with Lie algebra $\g=\sl_2$ or $\sol_2$. Key to our 
computations is the well-known fact that $[A,B]=0$ in $\g$ if and only if $\rank\{A,B\}\le 1$.

If $S=\overline{S} \setminus F$ is a quasi-projective curve, where $\overline{S}$ is projective 
and $F\subseteq \overline{S}$ is a finite subset, then $(\overline{S}, F)$ is the unique 
smooth compactification of $S$. For a quasi-projective manifold $M$, it is known that there 
is a {\em convenient} smooth compactification, $M=\overline{M} \setminus D$, where $D$ is a 
hypersurface arrangement in $\overline{M}$,  which has the property that every admissible map 
of general type, $f:M\to S$, is induced by a regular morphism, 
$\overline{f}: (\overline{M},  D) \to (\overline{S}, F)$. These in turn induce $\cdga$ maps 
between Orlik--Solomon models, denoted $f^*:A^{\hdot}(\overline{S},F)\to  A^{\hdot}(\overline{M},D)$. 
By naturality, we obtain an inclusion
\begin{equation}
\label{eq:flatpen}
\F(A^{\hdot}(\overline{M}, D), \g) \supseteq \F^1(A^{\hdot}(\overline{M}, D), \g) \cup 
\bigcup_{f\in \cE (M)} f^*\F(A^{\hdot}(\overline{S}, F), \g)\, .
\end{equation} 
For any finite-dimensional representation $\theta: \g \to \gl (V)$, we also know from 
\cite[Corollary 3.8]{MPPS} 
that $\Pi (A, \theta) \subseteq \RR^k_1 (A, \theta)$, if $H^k(A) \ne 0$. 

Let $\{ f: B^{\hdot}_f \to A^{\hdot} \}$ be a finite family of $\cdga$ maps between finite 
objects.

\begin{prop}
\label{prop:fltores}
Assume that $H^1(A) \ne 0$. For every $f$, suppose that $B^{\hdot}_f =B^{\le 2}_f $, 
$\chi (H^{\hdot}(B_f))<0$ and $f$ is a monomorphism. If $\RR^1_1 (A)= \bigcup_f \im H^1(f)$ and 
\eqref{eq:flatpen} holds as an equality for the family $\{f:B^{\hdot}_f\to A^{\hdot}\}$, then 
\begin{equation}
\label{eq:respen}
\RR^1_1 (A, \theta) = \Pi (A, \theta) \cup \bigcup_f f^* \F(B_f, \g)\, ,
\end{equation}
for any finite-dimensional representation $\theta: \g \to \gl (V)$.
\end{prop}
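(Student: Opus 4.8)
The plan is to establish the two inclusions of \eqref{eq:respen} separately, treating $\supseteq$ as the routine direction and $\subseteq$ as the substantive one. That $\Pi(A,\theta)\subseteq\RR^1_1(A,\theta)$ is immediate from \cite[Corollary 3.8]{MPPS} together with the standing hypothesis $H^1(A)\ne0$. For the remaining part of $\supseteq$, fix one of the maps $f$ and $\omega_0\in\F(B_f,\g)$, and set $\omega:=f^*\omega_0$. Since $f$ is a $\cdga$ monomorphism, $f(B_f)$ is a sub-$\cdga$ of $A$ and $\omega$ lies in $f(B_f^1)\otimes\g$, so $d_\omega$ preserves the twisted subcomplex $f(B_f)\otimes V$; this gives a short exact sequence $0\to f(B_f)\otimes V\to A\otimes V\to Q\to0$ of complexes under $d_\omega$, where $Q=(A/f(B_f))\otimes V$. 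As $A^0=B_f^0=\C\cdot1$, the quotient $Q$ vanishes in degree $0$, so $H^0(Q)=0$, and the long exact sequence shows that $H^1$ of the inclusion is injective. On the other hand $f$ induces an isomorphism $(B_f\otimes V,d_{\omega_0})\cong(f(B_f)\otimes V,d_\omega)$; since $B_f=B_f^{\le2}$, the underlying complex is concentrated in degrees $0,1,2$, and its Euler characteristic equals $\dim V\cdot\chi(H^{\hdot}(B_f))<0$, forcing $\dim H^1(B_f\otimes V,d_{\omega_0})\ge1$. Transporting along the injection, $H^1(A\otimes V,d_\omega)\ne0$, i.e. $\omega\in\RR^1_1(A,\theta)$.

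For the inclusion $\subseteq$, take $\omega\in\RR^1_1(A,\theta)\subseteq\F(A,\g)$. By the hypothesis that \eqref{eq:flatpen} holds as an equality, either $\omega\in f^*\F(B_f,\g)$ for some $f$, in which case we are done, or $\omega\in\F^1(A,\g)$, say $\omega=\eta\otimes g$ with $d\eta=0$. If $\det\theta(g)=0$ then $\omega\in\Pi(A,\theta)$ and we are again done, so assume $\det\theta(g)\ne0$. The crux is to reduce to rank one resonance: I claim that $H^1(A\otimes V,d_\omega)\ne0$ forces $\eta\in\RR^1_1(A)$. Granting this, $\RR^1_1(A)=\bigcup_f\im H^1(f)$ gives $\eta=f^*\bar\eta$ for some closed $\bar\eta\in B_f^1$; then $\omega=f^*(\bar\eta\otimes g)$ with $\bar\eta\otimes g\in\F^1(B_f,\g)$, so $\omega\in f^*\F(B_f,\g)$, completing this inclusion.

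It remains to prove the claim, which is the main technical point. Decompose $V=\bigoplus_\lambda V_\lambda$ into the generalized eigenspaces of the \emph{invertible} operator $\theta(g)$, so that every $\lambda\ne0$. Each $V_\lambda$ is $\theta(g)$-stable, hence $d_\omega$-stable, and the twisted complex splits as $\bigoplus_\lambda(A\otimes V_\lambda,d_\omega)$. Writing $\theta(g)|_{V_\lambda}=\lambda\,\id+N$ with $N$ nilpotent and filtering $V_\lambda$ by a Jordan basis exhibits $(A\otimes V_\lambda,d_\omega)$ as an iterated extension of copies of the rank one twisted complex $(A,d_{\lambda\eta})$. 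We may assume $\eta\ne0$ (if $\eta=0$ then $\omega=0\in f^*\F(B_f,\g)$ trivially), so $\lambda\eta\ne0$ and, since $A^0=\C\cdot1$, we have $H^0(A,d_{\lambda\eta})=0$ for every $\lambda$. If $\eta\notin\RR^1_1(A)$, then $H^1(A,d_{\lambda\eta})=0$ as well, and propagating these two vanishings up the Jordan filtration through the long exact sequences yields $H^1(A\otimes V_\lambda,d_\omega)=0$ for all $\lambda$, hence $H^1(A\otimes V,d_\omega)=0$, contrary to $\omega\in\RR^1_1(A,\theta)$. Thus $\eta\in\RR^1_1(A)$, proving the claim.

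The expected obstacle is precisely this non-semisimple step. For semisimple $\theta(g)$ the complex $(A\otimes V,d_\omega)$ splits directly into a sum of rank one summands $(A,d_{\lambda\eta})$ and the reduction is transparent; the real work lies in handling the Jordan blocks, where one must carry the combined vanishing $H^0(A,d_{\lambda\eta})=H^1(A,d_{\lambda\eta})=0$ along the filtration so that each connecting map is forced to be zero in the relevant range, keeping $H^1$ of every successive extension zero.
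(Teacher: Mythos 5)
Your ``$\supseteq$'' direction is correct and in fact more self-contained than the paper's: your Euler-characteristic argument (forcing $H^1(B_f\otimes V,d_{\omega_0})\ne 0$ from $B^{\hdot}_f=B^{\le 2}_f$ and $\chi(H^{\hdot}(B_f))<0$) and your short-exact-sequence argument (using $Q^0=0$ to inject $H^1(f(B_f)\otimes V)$ into $H^1(A\otimes V)$) are precisely the content of \cite[Proposition 2.4]{MPPS} and \cite[Lemma 2.6]{MPPS}, which the paper simply cites. The architecture of your ``$\subseteq$'' direction also matches the paper's: use the equality in \eqref{eq:flatpen} to reduce to $\omega=\eta\otimes g$, split off the case $\det\theta(g)=0$, and otherwise pull $\eta$ back through some $f$.

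There is, however, a genuine gap in your key claim. You assert: ``If $\eta\notin\RR^1_1(A)$, then $H^1(A,d_{\lambda\eta})=0$ as well.'' By definition, $\eta\notin\RR^1_1(A)$ gives only $H^1(A,d_{\eta})=0$; it says nothing about $H^1(A,d_{\lambda\eta})$ for the other eigenvalues $\lambda$ of $\theta(g)$. For a $\cdga$ with $d\ne 0$, the resonance variety $\RR^1_1(A)$ need not be a cone: the rescaling $a\mapsto\lambda^{\deg a}a$, which intertwines $d_\eta$ and $d_{\lambda\eta}$ when $d=0$, is no longer a chain map when $d\ne 0$, and one can write down finite $\cdga$'s where $H^1(A,d_\eta)\ne 0$ while $H^1(A,d_{2\eta})=0$. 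Consequently your claim that $H^1(A\otimes V,d_\omega)\ne 0$ forces $\eta\in\RR^1_1(A)$ is false for a general finite $\cdga$; what your Jordan-filtration argument actually proves is the weaker statement that \emph{some} eigenvalue $\lambda$ of $\theta(g)$ satisfies $\lambda\eta\in\RR^1_1(A)$ --- which is exactly \cite[Theorem 1.2]{MPPS}, the result the paper quotes at this point. The gap is closed by the hypothesis you invoke only afterwards: since $\RR^1_1(A)=\bigcup_f\im H^1(f)$ is a finite union of \emph{linear} subspaces, it is stable under multiplication by nonzero scalars, and $\lambda\ne 0$ (this is where $\det\theta(g)\ne 0$ enters) then upgrades $\lambda\eta\in\im H^1(f)$ to $\eta\in\im H^1(f)\subseteq\RR^1_1(A)$. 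This rescaling step --- nonzero eigenvalue combined with linearity of the components --- is precisely what the paper's proof is careful about, and your write-up assumes it silently; once you insert it at the right place, your argument is complete.
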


\begin{proof}
The inclusion "$\supseteq$". The fact that $\Pi (A, \theta) \subseteq \RR^1_1 (A, \theta)$
is due to the assumption $H^1(A) \ne 0$. The equality $\RR^1_1 (B_f, \theta)= \F (B_f, \g)$ 
follows from \cite[Proposition 2.4]{MPPS}, since $B^{\hdot}_f =B^{\le 2}_f $ and 
$\chi (H^{\hdot}(B_f))<0$. Lemma 2.6 from \cite{MPPS} implies that 
$f^* \RR^1_1 (B_f, \theta) \subseteq \RR^1_1 (A, \theta)$, since
$f$ is injective in degree $1$. To verify the inclusion "$\subseteq$", pick 
$\omega \in \RR^1_1 (A, \theta) \setminus \bigcup_f f^* \F(B_f, \g)$. We infer from 
\eqref{eq:flatpen} that $\omega =\eta \otimes g$, with $d\eta =0$ and $g\in \g$. 
Theorem 1.2 from \cite{MPPS} says then that there is an eigenvalue $\lambda$ of $\theta (g)$ 
such that $\lambda \eta \in \RR^1_1 (A)$. If $\det \theta (g)\ne 0$, then $\lambda\ne 0$. 
Since $\RR^1_1 (A)= \bigcup_f \im H^1(f)$, we deduce that $\eta =f^* \eta_f$, for
some $f$ and some $\eta_f \in H^1(B_f)$. Hence, $f^*(\eta_f \otimes g)\in \F(A, \g)$. The 
injectivity of $f$ forces then $\eta_f \otimes g \in \F(B_f, \g)$. This implies that 
$\omega\in f^*\F(B_f,\g)$, a contradiction. Consequently, $\omega\in\Pi(A,\theta)$, and 
we are done.
\end{proof}

Let $A$ be a finite model of the finite space $M$. If $b_1(M)=0$, then it follows from \cite{Q} 
that $\m (\pi_1(M))=0$. Theorems A and B in \cite{DP-ccm} together imply then that both germs 
$\Hom (\pi_1(M), \GG)_{(1)}$ and $\F(A, \g)_{(0)}$ contain only the origin. Furthermore, 
$b_1(M)=0$ implies that $\VV^1_1 (M, \iota)_{(1)}= \RR^1_1 (A, \theta)_{(0)}= \emptyset$, cf. 
\cite[Theorem B]{DP-ccm} and \cite[(15)]{MPPS}. For a quasi-projective manifold $M$ with 
$b_1(M)>0$, it follows from \cite[Example 5.3]{DP-ccm} that we may always find a 
convenient compactification (by adding at infinity a normal crossing divisor) which
satisfies all hypotheses from Proposition \ref{prop:fltores}, for the family 
$\{ f^*: A^{\hdot}(\overline{S}, F) \to  A^{\hdot}(\overline{M}, D)\}_{f\in \cE (M)}$, 
except possibly the last assumption.

In this way, we infer from Remark \ref{rk:w1} and Proposition \ref{prop:fltores} that the genus 
$0$ case of Theorem \ref{thm:main3} becomes a consequence of the following general result. 

\begin{theorem}
\label{thm:flat0}
If $b_1(M)>0$ and $W_1H^1(M)=0$, then equality holds in \eqref{eq:flatpen}, for
a convenient compactification with normal crossings and for $\g=\sl_2$ or $\sol_2$. 
\end{theorem}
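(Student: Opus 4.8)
The plan is to prove the reverse inclusion in \eqref{eq:flatpen} by exploiting the weight grading of the Gysin model. Write $A:=A^{\hdot}(\overline{M},D)$ for a convenient normal-crossing compactification as furnished by \cite[Example 5.3]{DP-ccm}; by the remarks preceding the statement, such an $A$ already satisfies every hypothesis of Proposition \ref{prop:fltores} except the desired equality, so in particular $\RR^1_1(A)=\bigcup_{f\in\cE}\im H^1(f)$ and each $f^*\colon A^{\hdot}(\overline{S},F)\to A$ is a monomorphism. The hypothesis $W_1H^1(M)=0$ means, via Deligne's theory and Remark \ref{rk:w1}, that $b_1(\overline{M})=0$; hence the weight-$1$ part $A^1_1=H^1(\overline{M})$ vanishes and $A^1=A^1_2$ consists entirely of Gysin generators of weight $2$. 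Since the differential preserves weight, $d(A^1)\subseteq A^2_2$, whereas products land in $A^1\cdot A^1\subseteq A^2_4$.

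First I would split the Maurer--Cartan equation by weight. For $\omega\in A^1\otimes\g$ the summands $d\omega\in A^2_2\otimes\g$ and $\tfrac12[\omega,\omega]\in A^2_4\otimes\g$ lie in distinct weight pieces of $A^2\otimes\g$, so $\omega\in\F(A,\g)$ if and only if $d\omega=0$ and $[\omega,\omega]=0$ separately. The first condition says exactly $\omega\in H^1(A)\otimes\g$, as $A^0=\C$. Choosing a basis $\{h_a\}$ of $\g$ and writing $\omega=\sum_a\eta_a\otimes h_a$ with $\eta_a\in H^1(A)$, the second condition becomes $\sum_{a<b}(\eta_a\,\eta_b)\otimes[h_a,h_b]=0$ in $H^2(A)\otimes\g$; expanding the brackets in the standard basis and using $[A,B]=0\iff\rank\{A,B\}\le1$, this is equivalent to the vanishing of the relevant pairwise cup products in $H^2(A)$. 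For $\g=\sl_2$ with basis $e,f,h$ the conditions are $\eta_e\cup\eta_f=\eta_h\cup\eta_e=\eta_h\cup\eta_f=0$; for $\g=\sol_2$ one gets the single relation $\eta_h\cup\eta_e=0$.

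Next I would locate a pencil. Assume $\omega\notin\F^1(A,\g)$, so the $\eta_a$ are not all proportional; pick a linearly independent pair, say $\eta_e,\eta_f$. The plane $V:=\spn\{\eta_e,\eta_f\}$ is isotropic for the cup product, hence every nonzero $v\in V$ lies in $\RR^1_1(A)$ (witnessed by a complementary vector of $V$). As $\RR^1_1(A)=\bigcup_{f\in\cE}\im H^1(f)$ is a finite union of linear subspaces and $\C$ is infinite, $V$ cannot be covered by finitely many proper subspaces, so $V$ is contained in a single component $P:=\im H^1(f)$. For the remaining class: if $\eta_h\notin\C\,\eta_e$, then $\spn\{\eta_e,\eta_h\}$ is likewise an isotropic plane lying in some component $P'$; since distinct positive-dimensional components of $\RR^1_1(A)$ meet only at the origin (the structural fact underlying Arapura's theory \cite{A}, via the bijection of \cite[Theorem C]{DP-ccm}) and $0\neq\eta_e\in P\cap P'$, we get $P'=P$, whence $\eta_h\in P$. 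Thus every $\eta_a$ lies in $P$, and $\omega\in P\otimes\g=f^*\bigl(H^1(A^{\hdot}(\overline{S},F))\bigr)\otimes\g$.

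Finally I would descend the flat connection along $f^*$. Write $\omega=f^*\omega'$ with $\omega'\in H^1(A^{\hdot}(\overline{S},F))\otimes\g$, which is possible and unique because $f^*$ is injective on $H^1$. Since $f^*$ is an injective \cdga\ map, applying it to the Maurer--Cartan expression of $\omega'$ yields that of $\omega$, which vanishes; injectivity then forces $\omega'\in\F(A^{\hdot}(\overline{S},F),\g)$, so $\omega\in f^*\F(A^{\hdot}(\overline{S},F),\g)$, completing the reverse inclusion (the $\sol_2$ case is the same argument with only the pair $\eta_h,\eta_e$). I expect the genuine obstacle to be the third paragraph: controlling how the several coordinate classes $\eta_a$ distribute among the components of $\RR^1_1(A)$, i.e.\ the independence of distinct pencils that prevents an isotropic subspace from straddling two components. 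Everything else is weight bookkeeping and the formal naturality of the Gysin models.
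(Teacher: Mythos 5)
Your argument is essentially correct, but it follows a genuinely different route from the paper's in its second half, and one step needs repair. The paper proceeds in two moves: first it reduces $\F(A,\g)$ to $\F((H^{\hdot}(M),d=0),\g)$ by introducing the sub-\cdga\ $A_Z$ with zero differential, checking that $A_Z\inj A^{\le 2}$ is a $1$--equivalence, and then invoking the germ-comparison results of \cite{DP-ccm} together with the positive-weight $\C^{\times}$--action (which forces all irreducible components of both flat-connection varieties through $0$) to upgrade equality of germs to equality of varieties; second, it simply quotes \cite[Corollary 7.2(55)]{MPPS} for the decomposition of $\F((H^{\hdot}(M),d=0),\g)$. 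Your weight-splitting of the Maurer--Cartan equation accomplishes the first move more elementarily: since $A^1=A^1_2$, $d$ preserves weight and products land in weight $4$, the equation splits into $d\omega=0$ and $[\omega,\omega]=0$, and the same weight bookkeeping shows a nonzero weight-$4$ product cannot be exact, so cochain-level and cohomology-level vanishing agree --- no germ machinery needed. Your third and fourth paragraphs then reprove directly (via isotropy, the covering lemma for finite unions of subspaces, and descent along the injective map $f^*$) what the paper outsources to \cite{MPPS}; in effect you reconstruct the proof of Proposition \ref{prop:flatalg} in the case at hand. Both are legitimate; the paper's citation is shorter, yours is self-contained.

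The step to fix is the appeal to the fact that distinct positive-dimensional components of $\RR^1_1(A)$ meet only at the origin. That fact is true, but it is not what \cite[Theorem C]{DP-ccm} (Theorem \ref{thm:respen} here) asserts --- that theorem only gives the bijection with $\cE(M)$ and the linearity of the components; the pairwise intersection statement is part of the structure theorem of \cite{DPS09} for quasi-projective manifolds with $1$--formal fundamental group, which does apply here because $W_1H^1(M)=0$ forces $1$--formality by Remark \ref{rk:w1}. Better still, you do not need it at all: your own second paragraph shows that \emph{all} pairwise products $\eta_a\eta_b$ vanish, so the full span $W=\spn\{\eta_e,\eta_f,\eta_h\}$ (respectively $\spn\{\eta_h,\eta_e\}$ for $\sol_2$) is isotropic, of dimension at least $2$ when $\omega\notin\F^1(A,\g)$. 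Every nonzero $v\in W$ is then resonant, witnessed by any $w\in W$ independent of $v$, so $W$ minus the origin lies in the union of the components $\im H^1(f)$, and the covering lemma applied to $W$ itself places $W$ --- hence all coefficients of $\omega$ simultaneously --- inside a single $\im H^1(f)$. This removes the pairwise-intersection input entirely, shortens your third paragraph, and also disposes of the degenerate case $\cE(M)=\emptyset$, where the same argument yields an immediate contradiction and hence $\F(A,\g)=\F^1(A,\g)$.
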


\begin{proof}
For every $f\in \cE (M)$, note that 
$H^{\hdot}(\overline{f}): H^{\hdot}(\overline{S}) \to H^{\hdot}(\overline{M})$
is injective (see e.g. \cite[Example 5.3]{DP-ccm}). Our vanishing assumption on $W_1H^1(M)$ 
implies that $H^{1}(\overline{M})=0$, cf. \cite{De2, De3}. Hence, $W_1H^1(S)=0$.

Let $A^{\hdot}_{\hdot} :=A^{\hdot}(\overline{M}, D)$ be the Gysin model, and assume that 
$W_1H^1(M)=0$. Then $A^1=A^1_2$, by \cite{M}. Set $Z^1_2:= H^1(A)\subseteq A^1_2$, and denote 
by $A^{\hdot}_Z \subseteq A^{\le 2}$ the sub--$\cdga$ with $d=0$ defined by 
$A^0_Z= \Q \cdot 1$, $A^1_Z=Z^1_2$ and $A^2_Z= \mu (\bigwedge^2 Z^1_2) \subseteq A^2_4$. 
Note that $d(A^1_2)\subseteq A^2_2$. We infer that the $\cdga$ inclusion 
$\iota : A^{\hdot}_Z \inj A^{\le 2}$ is a $1$--equivalence, i.e., it induces an isomorphism 
in $H^1$ and a monomorphism in $H^2$. On the other hand, it follows from the definitions that 
the variety $\F (A, \g)$ depends only on the co-restrictions of $d:A^1 \to A^2$ 
and $\mu : \bigwedge^2 A^1 \to A^2$ to the subspace $\im (d) +\im (\mu)\subseteq A^2$, 
for any $\cdga$ $A$ and any Lie algebra $\g$. Therefore, we have an inclusion
$\iota^* : \F (A_Z, \g) \subseteq \F (A, \g)$.

Since $\iota$ is a $1$--equivalence, it follows from Theorem 3.9 and \S\S 7.3--7.5 in 
\cite{DP-ccm} that $\F (A_Z, \g)$ and $\F (A, \g)$ have the same analytic germs at $0$. Now, 
we recall from \cite{DP-ccm} that both $\cdga$'s, $A$ and $A_Z$, have positive weights, and 
the associated $\C^{\times}$-actions preserve the varieties $\F (A_Z, \g)$, $\F (A, \g)$ 
and the origin $0$. This implies that all irreducible components of $\F(A,\g)$ pass through 
$0$, and similarly for $\F (A_Z, \g)$. This in turn is enough to infer that actually 
$\F (A_Z, \g) = \F (A, \g)$, since the germs at $0$ are equal. Moreover, 
$\F (A_Z, \g)= \F (H^{\hdot}(A), \g)$, by construction.

The equalities $\F(A^{\hdot}(\overline{M}, D), \g)= \F(H^{\hdot}(M), \g)$ and 
$\F(A^{\hdot}(\overline{S}, F), \g)= \F(H^{\hdot}(S), \g)$ are clearly compatible with the 
natural maps induced by $\overline{f}: (\overline{M},  D) \to (\overline{S}, F)$, 
for any $f\in \cE (M)$. Plainly $\F^1 (A^{\hdot}(\overline{M}, D), \g)$ depends only on 
$H^1(M)$ and $\g$. Thus, we may replace in \eqref{eq:flatpen} $A^{\hdot}(\overline{M}, D)$ by 
$(H^{\hdot}(M), d=0)$ and $A^{\hdot}(\overline{S}, F)$ by $(H^{\hdot}(S), d=0)$. 
In this way, our claim reduces to the equality proved in \cite[Corollary 7.2(55)]{MPPS}.
\end{proof}

In positive genus, we are going to describe explicitly the convenient compactifications from 
Theorem \ref{thm:main3}, and check that all hypotheses from Proposition \ref{prop:fltores} 
hold for the associated families of $\cdga$ maps, 
$\{ f^*: A^{\hdot}(\overline{S}, F) \to  A^{\hdot}(\overline{M}, D)\}_{f\in \cE (M)}$,  except 
the last assumption.

When $g\ge 2$, $M:=F(g, \G)=\Sigma_g^{\sv} \setminus D_{\G}$ is a convenient compactification: 
for $i\in \sv$, the regular morphism 
$\overline{f_i} := \proj_i : (\Sigma_g^{\sv}, D_{\G}) \to (\Sigma_g, \emptyset)$ extends
the admissible map $f_i: F(g, \G) \to \Sigma_g$ from Lemma \ref{lem:somepen}. By Lemma 
\ref{lem:prel}\eqref{pr1}, $H^1(A(g, \G)) \ne 0$, for $g\ge 1$. Clearly,  
$B^{\hdot}_f =B^{\le 2}_f $ and $\chi (H^{\hdot}(B_f))<0$, for any
$f\in \cE (M)$, since $B^{\hdot}_f = (H^{\hdot}(\Sigma_g), d=0)$. It is easy to check that 
$f^*: A^{\hdot}(g, \G') \to A^{\hdot}(g, \G)$ is injective in degree $\hdot \le 2$, for any 
$f:\G' \inj \G$ and $g\ge 0$. Finally, the assumption on $\RR^1_1(A(g, \G))$ 
in Proposition \ref{prop:fltores} follows from Proposition \ref{prop:res2}.

In genus $g=1$, $M:=F(1, \G)=\Sigma_1^{\sv} \setminus D_{\G}$ is again a convenient 
compactification. For $ij\in \se$,
denote by $\proj_{ij} : (\Sigma_1^{\sv}, D_{\G}) \to (\Sigma_1^{2}, D_{K_2})$ the regular 
morphism induced by projection.
Let $\overline{\delta} : (\Sigma_1^{2}, D_{K_2}) \to  (\Sigma_1, \{ 0\})$ be the regular 
morphism induced by the difference map of the elliptic curve $\Sigma_1$. 
Then clearly the  regular morphism $\overline{f_{ij}} := \overline{\delta} \circ \proj_{ij}$
extends the admissible map $f_{ij}: F(1, \G) \to \Sigma_1 \setminus \{ 0\}$ from Lemma 
\ref{lem:somepen}.
For any $f\in \cE (M)$, $B^{\hdot}_f =A^{\hdot} (\Sigma_1, \{ 0\})=B^{\le 2}_f$ is given by 
$B^{0}_f =\C \cdot 1$, $B^{1}_f =\spn \{ x,y,g \}$ and $B^{2}_f =\C \cdot \OO$. 
The differential is given by $dx=dy=0$ and $dg=\OO$, and the multiplication table is 
$xg=yg=0$ and $xy=\OO$. The hypotheses on $B^{\hdot}_f$ from Proposition \ref{prop:fltores}
are clearly satisfied. It follows from naturality of Orlik--Solomon models \cite{D} that 
$\delta^*x=x_1-x_2$,  $\delta^*y=y_1-y_2$, and $\delta^*g=G_{12}$. In particular, 
$\delta^* : A^{\hdot}(\Sigma_1, \{ 0\}) \inj A^{\hdot}(1, K_2)$ is injective, which
proves the injectivity of $B^{\hdot}_f \to A^{\hdot}$ for any $f\in \cE (M)$. Finally, the 
assumption on $\RR^1_1(A(1, \G))$ in Proposition \ref{prop:fltores} follows 
from Proposition \ref{prop:res1}, when $\se \ne \emptyset$. Otherwise, the claims
in Theorem \ref{thm:main3} follow from \cite[Corollary 7.7]{MPPS}. 

By virtue of Proposition \ref{prop:fltores}, we have thus reduced the proof of Theorem 
\ref{thm:main3} in positive genus to checking that \eqref{eq:flatpen} holds as an equality 
for the families 
$\{ f^*: A^{\hdot}(\overline{S}, F) \to  A^{\hdot}(\overline{M}, D)\}_{f\in \cE (M)}$ described 
above. To verify this equality, we will use another basic property of the holonomy 
Lie algebra of a $\cdga$ $A$, proved in Proposition 4.5 from \cite{MPPS}.
This result allows us to naturally replace the variety of flat connections $\F (A, \g)$ by the 
variety of Lie homomorphisms, $\Hom_{\Lie} (\h (A), \g)$,   and $\F^1 (A, \g)$ by 
$\Hom_{\Lie}^1(\h(A),\g):=\{\varphi\in\Hom_{\Lie}(\h (A), \g) \mid \dim \im (\varphi)\le 1\}$.

\begin{prop}
\label{prop:flat1}
If $\varphi \in \Hom_{\Lie} (\h (A(1, \G)), \g) \setminus  \Hom_{\Lie}^1 (\h (A(1, \G)), \g)$, 
there is $ij\in\se$ such that $\varphi\in f_{ij}^*\Hom_{\Lie}(\h(A(\Sigma_1,\{0\})), \g)$.
\end{prop}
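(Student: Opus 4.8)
Let me understand what I need to prove. We have the holonomy Lie algebra $\h(A(1,\G))$, which by Proposition \ref{prop:malpos} is $L(1,\G)$: the quotient of the free Lie algebra on generators $\{a_i, b_i\}_{i\in\sv}$ (here $g=1$ so I drop the superscript $s$) by relations \eqref{eq:malpos21}--\eqref{eq:malpos26}. The target curve model is $\h(A(\Sigma_1,\{0\}))$, which is the holonomy Lie algebra of $B = A(\Sigma_1,\{0\})$ with $B^1 = \spn\{x,y,g\}$, $dg=\OO$, $xy=\OO$, $xg=yg=0$. I should first identify this target Lie algebra explicitly: dualizing, it is the free Lie algebra on $\{a,b,c\}$ (dual to $x,y,g$) modulo the single relation coming from $\OO$, namely $[a,b] = c$ with $c$ central (since $xg=yg=0$ forces $[a,c]=[b,c]=0$); so it is the Heisenberg Lie algebra. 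The map $f_{ij}^*$ on Lie algebras sends $a\mapsto a_i-a_j$, $b\mapsto b_i-b_j$, $c\mapsto C_{ij}$ (dual to $\delta^*x=x_i-x_j$ etc.).

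**The main strategy.** Given $\varphi: \h(A(1,\G))\to\g$ with $\dim\im(\varphi)\ge 2$, I want to produce a single edge $ij\in\se$ and show $\varphi$ factors through $f_{ij}^*$. The natural approach is to use the key fact flagged in the section preamble: in $\g=\sl_2$ or $\sol_2$, two elements commute if and only if they span a space of rank $\le 1$. The plan is to examine $\varphi(a_i), \varphi(b_i), \varphi(C_{ij})$ and exploit the relations: since $[a_i,a_j]=[b_i,b_j]=0$ and (for $s\ne t$, but here $g=1$ so only the mixed relation \eqref{eq:malpos23} with distinct superscripts is vacuous) the commuting relations force large families of images to be pairwise commuting, hence pairwise rank-$\le 1$. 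I expect the argument to run as follows: first show that if $\varphi$ has rank $\ge 2$, then its image cannot have all the $\varphi(a_i)$ and $\varphi(b_i)$ mutually commuting across all vertices, because that would force $\dim\im\varphi\le 1$ in $\sl_2$ (a commuting family in $\sl_2$ lies in a single Cartan or Borel line); so some specific bracket is nonzero, and relation \eqref{eq:malpos24} $[a_i,a_j]=[b_i,b_j]=0$ together with the structure pins down that the only ``active'' generators come from a single pair of vertices $i,j$, which must form an edge because $C_{ij}\ne 0$ requires $ij\in\se$ by \eqref{eq:malpos22}.

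**Key steps in order.** First I would reduce to the image being abelian or solvable: observe $\im(\varphi)$ is a Lie subalgebra of $\g$ of dimension $\ge 2$, hence (in $\sl_2$) either all of $\sl_2$ or a 2-dimensional Borel, and (in $\sol_2$) the full $\sol_2$. Second, using relations \eqref{eq:malpos24}, the elements $\{\varphi(a_i)\}_i$ are mutually commuting and $\{\varphi(b_i)\}_i$ are mutually commuting; I would translate ``mutually commuting'' into ``mutually rank-$\le 1$,'' which in $\g$ means each family lies in a common line (generically) or shares a common eigenvector structure. Third, I would use the defining relation \eqref{eq:malpos21}, $C_{ij}=[a_i,b_j]=[a_j,b_i]$, together with \eqref{eq:malpos26} (centrality of $C_{ij}$ against all $a_k,b_k$ for $k\ne i,j$) and the mixed vanishing \eqref{eq:malpos23}, to argue that at most one pair of indices can contribute nontrivially. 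Concretely, I would suppose two disjoint or overlapping edges were both ``active'' and derive, via the rank-1 criterion and the commuting relations, that the image would be forced into dimension $\le 1$, contradicting rank $\ge 2$. Having isolated a single edge $ij$, I would check the remaining generators all map into the subalgebra generated by $\varphi(a_i-a_j), \varphi(b_i-b_j), \varphi(C_{ij})$, i.e., that $\varphi$ factors through the Heisenberg quotient $f_{ij}^*$.

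**The main obstacle.** I expect the hard part to be the bookkeeping that rules out \emph{several} edges being simultaneously active, i.e., showing the ``support'' of $\varphi$ collapses to one edge. The commuting relations \eqref{eq:malpos24} give a lot of rank-$\le 1$ constraints, but one must carefully combine them with the non-commuting data forced by $\dim\im\varphi\ge 2$ to conclude that all but two vertices are sent into the common line and that those two vertices are adjacent. A clean way to organize this is probably to pick a Cartan/Borel decomposition of $\g$ adapted to the image and track which generators have nonzero ``off-diagonal'' components; the relation $[a_i,a_j]=0$ forces the $a_i$ to be simultaneously triangularizable, and the pairing $[a_i,b_j]=C_{ij}$ being central then constrains the off-diagonal supports to overlap in a single vertex pair. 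I anticipate one needs to handle the cases $\g=\sl_2$ and $\g=\sol_2$ with slightly different normal forms, but the rank-$1$-iff-commuting criterion should make both uniform once the single-edge reduction is in place.
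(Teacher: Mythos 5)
Your proposal has a fatal error at its foundation: the identification of the target Lie algebra $\h(A(\Sigma_1,\{0\}))$ as the Heisenberg algebra. In the holonomy Lie algebra construction of \cite[Definition 4.2]{MPPS}, the relations are indexed by (a basis of) $A^{2*}$ via the map $d^*+\mu^*$; here $A^2(\Sigma_1,\{0\})=\C\cdot\OO$ is \emph{one-dimensional}, so there is exactly one relation, namely $g^*+[x^*,y^*]=0$ (coming from $dg=\OO$ and $xy=\OO$). Hence $\h(A(\Sigma_1,\{0\}))=\L(x^*,y^*)$ is the \emph{free} Lie algebra on two generators. Your inference that $xg=yg=0$ ``forces'' $[x^*,g^*]=[y^*,g^*]=0$ misreads the construction: a vanishing product means that the relation $\mu^*(\OO^*)$ has no component on the wedge $x^*\wedge g^*$, not that the corresponding bracket vanishes in the quotient. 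This is not a cosmetic slip, because every Lie homomorphism from the Heisenberg algebra to $\sl_2$ or $\sol_2$ has image of dimension at most one (the image is nilpotent, while every two-dimensional subalgebra of $\sl_2$ or $\sol_2$ is a non-nilpotent solvable algebra). So with your target, $f_{ij}^*\Hom_{\Lie}(\mathrm{Heis},\g)$ consists of homomorphisms of rank $\le 1$, and the statement you set out to prove would assert that rank-$\ge 2$ homomorphisms lie in a set of rank-$\le 1$ homomorphisms --- which is false: already for $\G=K_2$, the assignment $a_1\mapsto e$, $a_2\mapsto -e$, $b_1\mapsto f$, $b_2\mapsto -f$ defines a surjection of $\h(A(1,K_2))$ onto $\sl_2$. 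The freeness of the target is precisely what makes the final factorization step possible (and trivial): once $\varphi$ is in normal form, \emph{any} pair of values in $\g$ defines a homomorphism out of $\L(x^*,y^*)$.

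Beyond this, your support-collapsing outline does follow the paper's skeleton (relations \eqref{eq:malpos21}, \eqref{eq:malpos24}, \eqref{eq:malpos26} together with the rank-one commutation criterion pin the $\varphi(a_i)$ to a line $\C v$, the $\varphi(b_i)$ to a line $\C w$, and then kill all but two coefficients using that $[v,[v,w]]$ and $[w,[v,w]]$ cannot both vanish), but it omits relation \eqref{eq:malpos25} entirely. That relation is essential: combined with \eqref{eq:malpos24} it makes $\sum_i a_i$ and $\sum_i b_i$ central, hence $\sum_i\alpha_i=\sum_i\beta_i=0$ in the image (otherwise $v$ or $w$ would be central and $[v,w]=0$), which is what forces the two surviving coefficients to be opposite, i.e., $\varphi(a_i)=-\varphi(a_j)$ and $\varphi(b_i)=-\varphi(b_j)$. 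Without this antisymmetric normal form, $\varphi$ does not match the image of $f_{ij}^*$, since $f_{ij*}$ sends $a_i\mapsto x^*$, $a_j\mapsto -x^*$, $b_i\mapsto y^*$, $b_j\mapsto -y^*$ and all other generators to $0$.
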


\begin{proof}
For $g\ge 1$, the holonomy Lie algebra $\h (A(g, \G))$ is isomorphic to the Lie algebra $L(g, \G)$
from Proposition \ref{prop:malpos}. By (\ref{eq:malpos24}), a morphism 
$\varphi \in \Hom_{\Lie} (\h (A(1, \G)), \g)$ satisfies
$$ [\varphi(a_i),\varphi(a_j)]=[\varphi(b_i),\varphi(b_j)]=0\, , $$
therefore $\varphi$ is defined by two elements $v,w\in\g$ and two $n$-vectors 
$\alpha_*=(\alpha_i)$, $\beta_*=(\beta_i)$:
$$ \varphi(a_i)=\alpha_iv,\, \varphi(b_i)=\beta_iw\, . $$
Equation (\ref{eq:malpos21}) implies that $(\alpha_i\beta_j-\alpha_j\beta_i)[v,w]=0$. If
$\varphi \notin \Hom_{\Lie}^1(\h (A(1, \G)), \g)$, we have $\alpha_*\ne 0$, 
$\beta_*\ne 0$ and $[v,w]\ne 0$, hence $\rank \{\alpha_*,\beta_*\}=1$. Equation 
(\ref{eq:malpos25}) is equivalent to 
$$ \Sigma_j[a_i,b_j]=\Sigma_j[a_j,b_i]=0\, \quad (i\in\sv)\, ;$$   
together with relation (\ref{eq:malpos24}), these imply that $\Sigma_ia_i,\Sigma_ib_i$ are 
central elements, therefore their images $\Sigma_i\alpha_iv$, $\Sigma_i\beta_iw$ are 0. 
In particular, at least two components of $\alpha_*$ (and the same components of $\beta_*$)
are non-zero. 

We will show that $\alpha_*$ and $\beta_*$ have exactly two 
non-zero components. Relations (\ref{eq:malpos21}) and (\ref{eq:malpos26}) imply that, 
for any three distinct indices $i,j,k$, 
$$\alpha_k\alpha_i\beta_j[v,[v,w]]=\beta_k\alpha_i\beta_j[w,[v,w]]=0\, .$$  
The two brackets $[v,[v,w]],[w,[v,w]]$ cannot be both 0 (otherwise $\rank\{v,w\}=1$); if
$[v,[v,w]]\ne 0$, we have (for any three indices) $\alpha_k\alpha_i\beta_j=0$, which proves 
our claim (similarly if $[w,[v,w]]\ne 0$).    

We infer that $\varphi$ must be of the form
\begin{equation}
\label{eq=philie}
\varphi(a_i)=\alpha v,\,\varphi(a_j)=-\alpha v,\,\varphi(a_k)=0,\,  
\varphi(b_i)=\beta w,\,\varphi(b_j)=-\beta w,\,\varphi(b_k)=0\, ,
\end{equation}
with $\alpha, \beta \ne 0$ (where $k\ne i,j$). Therefore, $ij\in \se$, by \eqref{eq:malpos22}.

The description of $A^{\hdot}(\Sigma_1, \{0\})$ implies, by a straightforward computation, that the Lie algebra
$\h (A(\Sigma_1, \{ 0\}))$ is the quotient of the free Lie algebra $\L (x^*, y^*, g^*)$ by the relation
$g^*+[x^*,y^*]=0$, where $\{ x^*, y^*, g^*\}$ is the basis dual to $\{ x,y,g\}$. Therefore,
$\h (A(\Sigma_1, \{ 0\}))= \L (x^*, y^*)$. Moreover, the description of the action of $\delta^*$
and $\proj_{ij}^*$ on Orlik--Solomon models implies, by taking duals, that the Lie homomorphism 
$f_{ij *}: \h (A(1, \G)) \to \h (A(\Sigma_1, \{ 0\}))$ sends $a_i$ to $x^*$, $a_j$ to $-x^*$, 
$b_i$ to $y^*$, $b_j$ to $-y^*$, and $a_k, b_k$ to $0$ for $k\ne i,j$; see \cite[Definition 4.2]{MPPS}. 

Define $\psi\in\Hom_{\Lie} (\h (A(\Sigma_1, \{ 0\})), \g)$ by $x^*\mapsto \alpha v$, $y^*\mapsto \beta w$.
By \eqref{eq=philie}, $\varphi = f_{ij}^* (\psi)$. 
\end{proof}

\begin{prop}
\label{prop:flat2}
Assume that $g\ge 2$. If $\varphi \in \Hom_{\Lie} (\h (A(g, \G)), \g) \setminus  \Hom_{\Lie}^1 
(\h (A(g, \G)), \g)$, there is $i\in \sv$ 
such that $\varphi \in f_{i}^* \Hom_{\Lie} (\h (A(\Sigma_g, \emptyset)), \g)$.
\end{prop}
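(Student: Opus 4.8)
The plan is to mirror the structure of the proof of Proposition~\ref{prop:flat1}, but to exploit the availability of two distinct upper indices $s\ne t$ (which requires $g\ge 2$) in order to rule out the ``two-vertex'' contributions that survive in genus $1$. First I would pass to the Lie-theoretic side via the identification $\h(A(g,\G))\cong L(g,\G)$ from Proposition~\ref{prop:malpos}, and record that $\h(A(\Sigma_g,\emptyset))$ is the quotient of the free Lie algebra on $\{a^s,b^s\}$ by the single relation $\sum_s[a^s,b^s]=0$, with $f_{i*}\colon \h(A(g,\G))\to \h(A(\Sigma_g,\emptyset))$ sending $a_i^s\mapsto a^s$, $b_i^s\mapsto b^s$ and $a_k^s,b_k^s\mapsto 0$ for $k\ne i$, exactly as in the genus-$1$ computation. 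Writing $P_i^s:=\varphi(a_i^s)$, $Q_i^s:=\varphi(b_i^s)$ and $\Gamma_{ij}:=\varphi(C_{ij})\in\g$, the relations \eqref{eq:malpos21}--\eqref{eq:malpos26} translate into bracket identities among these elements, and the goal becomes to find a single vertex $i$ with $P_k^s=Q_k^s=0$ for all $k\ne i$ and all $s$.

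The key step, and the place where $g\ge 2$ is essential, is the claim that every $\Gamma_{ij}$ vanishes. Suppose $\Gamma_{ij}\ne 0$ for some edge $ij$ (say $i<j$) and fix $s\ne t$. By \eqref{eq:malpos21} one has $[P_i^s,Q_j^s]=[P_i^t,Q_j^t]=\Gamma_{ij}\ne 0$, while \eqref{eq:malpos23} gives $[P_i^s,Q_j^t]=0$. Since $Q_j^t\ne 0$, the rank criterion $[A,B]=0\Leftrightarrow \rank\{A,B\}\le 1$ forces $Q_j^t\in\C\cdot P_i^s$, whence $\Gamma_{ij}=[P_i^t,Q_j^t]$ is a nonzero multiple of $[P_i^t,P_i^s]$; in particular $P_i^s,P_i^t$ are nonzero and non-proportional. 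Now \eqref{eq:malpos24} says $[P_i^a,P_j^b]=0$ for all $a,b$, so every $P_j^b$ lies in the one-dimensional centralizers of both $P_i^s$ and $P_i^t$; as these two distinct centralizers meet only in $0$, we get $P_j^b=0$ for all $b$. This contradicts $\Gamma_{ij}=[P_j^t,Q_i^t]\ne 0$ (again \eqref{eq:malpos21}), proving $\Gamma_{ij}=0$. I expect this paragraph to be the main obstacle, since it is precisely the step that fails in genus $1$ and so must make genuine use of the extra index $t\ne s$.

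With all $\Gamma_{ij}=0$, the remainder is a commuting-subspaces dichotomy. The vanishing of the $\Gamma_{ij}$ combines with \eqref{eq:malpos23} (case $s\ne t$) to give $[P_i^s,Q_j^t]=0$ for all $i\ne j$ and all $s,t$, and \eqref{eq:malpos24} supplies the analogous vanishing for the $P$'s and for the $Q$'s; hence the subspaces $V_i:=\spn\{P_i^s,Q_i^s\mid s\}$ satisfy $[V_i,V_j]=0$ for $i\ne j$. Since $\dim\im\varphi\ge 2$, the image is non-abelian, so some $V_{i_0}$ must contain two non-commuting elements (otherwise $\sum_iV_i$ would be an abelian subalgebra of $\g$, forcing $\dim\im\varphi\le 1$). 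Any two non-commuting elements generate all of $\g$, both for $\sl_2$ and for $\sol_2$, and $\g$ is centerless; therefore each $V_j$ with $j\ne i_0$ centralizes $\g$ and is $0$, giving the desired concentration $P_k^s=Q_k^s=0$ for $k\ne i_0$. Finally, \eqref{eq:malpos25} now reads $\sum_s[P_{i_0}^s,Q_{i_0}^s]=0$, which is exactly the defining relation of $\h(A(\Sigma_g,\emptyset))$, so $a^s\mapsto P_{i_0}^s$, $b^s\mapsto Q_{i_0}^s$ determines $\psi\in\Hom_{\Lie}(\h(A(\Sigma_g,\emptyset)),\g)$ with $\varphi=f_{i_0}^*\psi$, completing the proof.
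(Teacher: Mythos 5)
Your proof is correct in substance and organizes the argument differently from the paper. The paper's proof splits into cases according to the dimensions of the spans $\spn\{\varphi(a_*^*)\}$ and $\spn\{\varphi(b_*^*)\}$: when both are one-dimensional it uses \eqref{eq:malpos23}, \eqref{eq:malpos21} and $g\ge 2$ to force the coefficients at all but one vertex to vanish, and when one span is at least two-dimensional it runs a centralizer argument much like your last paragraph. You instead front-load a uniform lemma --- $\varphi(C_{ij})=0$ for all $i\ne j$, which is exactly where $g\ge 2$ (the choice of $s\ne t$) enters --- and then finish with a commuting-subspaces dichotomy for $V_i=\spn\{\varphi(a_i^s),\varphi(b_i^s)\mid s\}$. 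This avoids the case analysis, makes the role of the hypothesis $g\ge 2$ more transparent, and both arguments rest on the same rank criterion $[A,B]=0\Leftrightarrow\rank\{A,B\}\le 1$; your key step ($\Gamma_{ij}=0$) and its proof are sound, including the correct use of \eqref{eq:malpos21} to extract the contradiction $\Gamma_{ij}=[\varphi(a_j^t),\varphi(b_i^t)]=0$.

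One justification, however, is false as stated: two non-commuting elements of $\sl_2$ need \emph{not} generate all of $\sl_2$. The standard generators $h,e$ with $[h,e]=2e\ne 0$ generate only the Borel subalgebra $\spn\{h,e\}$. Fortunately the conclusion you draw from this claim survives, by the same rank-criterion argument you already used to prove $\Gamma_{ij}=0$: if $Z\in V_j$ (with $j\ne i_0$) commutes with the non-commuting pair $X,Y\in V_{i_0}$ and $Z\ne 0$, then $X,Y\in\C\cdot Z$, forcing $[X,Y]=0$, a contradiction; hence $V_j=0$. (Equivalently, the centralizers of $X$ and of $Y$ are the lines $\C X$ and $\C Y$, which meet only in $0$ since $X$ and $Y$ are non-proportional.) With that one-line replacement your proof is complete.
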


\begin{proof}
The holonomy Lie algebra of $A(\Sigma_g, \emptyset)=A(g, K_1)$ is generated by the elements
$\{a^1,b^1,\ldots,a^g,b^g\}$ modulo the relation $\Sigma_s[a^s,b^s]=0$, hence a morphism 
$\psi\in\Hom_{\Lie} (\h (A(\Sigma_g,\emptyset)), \g)$ is defined by $2g$ elements
$v^1,w^1,\ldots,v^g,w^g\in \g$ satisfying the relation $\Sigma_s[v^s,w^s]=0$. 

It is sufficient to show
that, for $\varphi\in\Hom_{\Lie}(\h (A(g,\G)),\g)\setminus\Hom_{\Lie}^1(\h(A(g,\G)),\g)$,
there is an index $i$ such that, for any $j\ne i$ and any $t$, 
$\varphi(a_j^{\,t})=\varphi(b_j^{\,t})=0$: 
this implies, via (\ref{eq:malpos21}), that $\varphi(C_{jk})=0$ (for any $j\ne k$) and, using 
(\ref{eq:malpos25}), that $\Sigma_s[\varphi(a_i^s),\varphi(b_i^s)]=0$.

Denote by $A$ and $B$ the span of $\{\varphi(a_*^*)\}$ and $\{\varphi(b_*^*)\}$ respectively. 
As $\dim\im(\varphi)\geq 2$, we have to analyze only two cases:
  
Case 1: $\dim (A)=\dim (B)=1$. In this case there are two linearly independent 
elements $v,w\in\g$ and indices $(i,s)$, $(k,t)$ such that
$$ \varphi(a_j^r)=\alpha_j^{\,r}v,\,\varphi(b_j^r)=\beta_j^{\,r}w,\,\mbox{ for any }j,r
     \mbox{ and } \alpha_i^s\ne 0\ne \beta_k^{\,t}\,.$$
Relation (\ref{eq:malpos23}) and $[v,w]\ne 0$ imply that $\beta_j^{\,r}=0$ if $j\ne i$ and 
$r\ne s$; from the hypothesis $g \geq 2$ and relation (\ref{eq:malpos21}), we obtain 
$$ \varphi(C_{ij})=\alpha_i^s\beta_j^s[v,w]=\alpha_i^{\,r}\beta_j^{\,r}[v,w]=0\, , $$ 
hence $\beta_j^{\,r}=0$ for any $j\ne i$ and any $r$. This implies that $k=i$ and, by symmetry,
that $\alpha_j^{\,r}=0$ for any $j\ne i$ and any $r$. 

Case 2: $\dim (A)\geq 2$ (by symmetry, the case $\dim (B)\geq 2$ can be treated in the same way). 
In this case there are indices $i=j$, $s\ne t$  and two linearly independent elements 
$v^s,v^{\,t}\in\g$ such that
$$ \varphi(a_i^s)=v^s,\,\varphi(a_j^t)=v^{\,t} $$
($i\ne j$ contradicts relation (\ref{eq:malpos24}), since $[v^s,v^{\,t}]\ne 0$). For 
any $k\ne i$ and any $r$, we obtain from (\ref{eq:malpos24}) that 
$$ [\varphi(a_i^s),\varphi(a_k^r)]=[\varphi(a_i^t),\varphi(a_k^r)]=0\, , \mbox{ hence } 
    \varphi(a_k^r)=0\, . $$
Using relation (\ref{eq:malpos23}), the same argument applied to $b_k^r$ shows that 
$\varphi(b_k^r)=0$ for any $k\ne i$ and any $r\ne s,t$. 
Again from (\ref{eq:malpos23}), $[\varphi (a_i^t), \varphi (b_k^s)]=0$. On the other hand, by
(\ref{eq:malpos21}), $[\varphi (a_i^s), \varphi (b_k^s)]=[\varphi (a_k^t), \varphi (b_i^t)]=0$. 
Hence, $\varphi(b_k^r)=0$ for any $k\ne i$ and $r= s,t$, and we are done.
\end{proof}

Propositions \ref{prop:flat1} and \ref{prop:flat2} complete the proof of Theorem \ref{thm:main3}. 
Similar results were obtained in 
\cite{MPPS}, for quasi-projective manifolds with $1$--formal fundamental group. 
(Note that $(H^{\hdot}(S), d=0)$ is a finite model of a quasi-projective curve $S$, and 
$\F ((H^{\hdot}(S), d=0), \g)$ is computed in Lemma 7.3 from \cite{MPPS}, when $\chi (S)<0$.) 
They were based on the following algebraic construction. Let $A^{\hdot}$ 
be a $1$--finite $\cdga$ with linear resonance, i.e., $\RR^1_1 (A)= \bigcup_{C\in \cC} C$ 
is a finite union of linear subspaces of $H^1(A)$. For each $C\in \cC$, let
$A^{\hdot}_C \inj A^{\le 2}$ be the sub-$\cdga$ defined by $A^{0}_C =\C \cdot 1$,  $A^{1}_C =C$ 
and $A^{2}_C =A^2$.

\begin{prop}[\cite{MPPS}, Proposition 5.3]
\label{prop:flatalg}
If in addition $d=0$ then 
\[
\F (A, \g)= \F^1 (A, \g) \cup \bigcup_{C\in \cC} \F (A_C, \g)\, ,
\]
for $\g =\sl_2$ or $\sol_2$.
\end{prop}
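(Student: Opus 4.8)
The plan is to prove the two inclusions separately, the nontrivial one being ``$\subseteq$''. The reverse inclusion is formal: $\F^1(A,\g)$ consists of flat connections by definition, and for each $C\in\cC$ the inclusion $A_C\inj A^{\le 2}$ is a map of $\cdga$'s which is the identity in degree $2$, so by naturality of $\F$ one has $\F(A_C,\g)=\F(A,\g)\cap(C\otimes\g)\subseteq\F(A,\g)$. I would organize the forward inclusion according to the rank of the $\g$-valued part of a flat connection, using the key fact recalled at the start of this section: in $\sl_2$ and in $\sol_2$, two elements commute if and only if they are linearly dependent.

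First I fix $\omega\in\F(A,\g)$ and let $W:=\im(\omega)\subseteq\g$ be the smallest subspace with $\omega\in A^1\otimes W$, so that $\dim W$ equals the rank of $\omega$. If $\dim W\le 1$, then $\omega=\eta\otimes g$ for some $\eta\in A^1=H^1(A)$ and $g\in\g$, whence $\omega\in\F^1(A,\g)$ and we are done. So assume $\dim W\ge 2$, choose a basis $g_1,\dots,g_m$ of $W$, and write $\omega=\sum_k\eta_k\otimes g_k$; minimality of $W$ forces the $\eta_k\in A^1$ to be linearly independent. Since $d=0$, the Maurer--Cartan equation for $\omega$ collapses to the single bilinear identity $\sum_{k<l}\mu(\eta_k\wedge\eta_l)\otimes[g_k,g_l]=0$ in $A^2\otimes\g$.

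The heart of the argument is to deduce from this identity that the span $U:=\spn\{\eta_1,\dots,\eta_m\}\subseteq A^1$ is \emph{totally isotropic} for the cup product, i.e.\ $\mu(u\wedge u')=0$ for all $u,u'\in U$. For this I use that the pairwise brackets $[g_k,g_l]$ with $k<l$ are themselves linearly independent in $\g$: when $\g=\sol_2$ this is just $[g_1,g_2]\ne 0$ (two independent elements do not commute), while for $\g=\sl_2$ with $m=3$ it is the statement that the bracket map $\bigwedge^2\sl_2\to\sl_2$ is an isomorphism, so that a basis $g_k\wedge g_l$ of $\bigwedge^2\sl_2$ is carried to a basis $[g_k,g_l]$ of $\sl_2$. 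Feeding this linear independence into the Maurer--Cartan identity forces every coefficient $\mu(\eta_k\wedge\eta_l)$ to vanish, and bilinearity then yields $\mu(u\wedge u')=0$ on all of $U$.

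Finally I would exploit linear resonance. Because $\dim U\ge 2$, every nonzero $u\in U$ admits a companion $u'\in U$ independent from it with $\mu(u\wedge u')=0$, which is exactly the condition $u\in\RR^1_1(A)$; hence $U\subseteq\RR^1_1(A)=\bigcup_{C\in\cC}C$. As $U$ is a linear subspace it is irreducible, so it lies in a single component $C\in\cC$. Then $\omega\in U\otimes\g\subseteq C\otimes\g=A^1_C\otimes\g$ is flat, i.e.\ $\omega\in\F(A_C,\g)$, completing the forward inclusion. I expect the isotropy step of the third paragraph to be the main obstacle: everything hinges on the linear independence of the brackets $[g_k,g_l]$, which is where the special structure of $\sl_2$ and $\sol_2$ (rather than that of an arbitrary two- or three-dimensional Lie algebra) is genuinely used; once $U$ is known to be isotropic of dimension $\ge 2$, the reduction to a single component $C$ is the soft irreducibility argument just indicated.
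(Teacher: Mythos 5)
Your proof is correct, and it is essentially the argument behind the cited result: note that the present paper does not prove this proposition at all (it is imported verbatim from \cite{MPPS}, Proposition 5.3), and your route --- minimal tensor decomposition of $\omega$, linear independence of the brackets $[g_k,g_l]$ (the rank--$\le 1$ commutation criterion for $m=2$, the isomorphism $\bigwedge^2\sl_2\simeq\sl_2$ for $m=3$) forcing isotropy of $U=\spn\{\eta_1,\dots,\eta_m\}$, then linear resonance plus irreducibility of $U$ to place $\omega$ in a single $\F(A_C,\g)$ --- is the natural and, up to presentation, the same one. All the individual steps check out, including the identification $\F(A_C,\g)=\F(A,\g)\cap(C\otimes\g)$ (valid because $A_C^2=A^2$ and $d=0$) and the use of $\eta_k^2=0$ to upgrade vanishing of the off-diagonal products $\mu(\eta_k\wedge\eta_l)$ to total isotropy of $U$.
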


\begin{example}
\label{ex:notalg}
The geometric formulae from Theorem \ref{thm:main3}, based on Orlik--Solomon models, seem to be the 
right extension of the similar results in \cite{MPPS}, beyond the 
$1$-formal case. Indeed, let us consider for $A^{\hdot}= A^{\hdot}(1, \G)$ the linear 
decomposition of $\RR^1_1 A)$ from Proposition \ref{prop:res1}, case $\se\ne\emptyset$. We 
claim that, for each $C=\im H^1(f_{ij})$, $\F (A_C, \g)= \F^1 (A_C, \g)$, when $\g =\sl_2$ or 
$\sol_2$. This implies that the algebraic formula from Proposition \ref{prop:flatalg} reduces 
in this case to the equality $\F (A, \g)= \F^1 (A, \g)$. On the other hand, we have seen that 
$\h (A(\Sigma_1, \{ 0\}))$ is a free Lie algebra on two generators, and therefore 
$\F (A(\Sigma_1, \{ 0\}), \g)$ contains an element not in $\F^1 (A(\Sigma_1, \{ 0\}), \g)$. 
Consequently, if $ij \in \se$ then it follows from Theorem \ref{thm:main3} that 
$f_{ij}^* \F (A(\Sigma_1, \{ 0\}), \g) \setminus \F^1 (A(1, \G), \g) \ne \emptyset$. Thus, the 
algebraic formula does not hold.

To compute $\h (A_C)$, we may replace $A^2_C$ by $\mu_C (\bigwedge^2 C)$. Note that $d_C=0$, 
$C$ is two-dimensional generated by $x_i-x_j$ and  $y_i-y_j$, and $(x_i-x_j)(y_i-y_j)\ne 0$. 
It follows that the holonomy Lie algebra $\h (A_C)$ is two-dimensional abelian. Therefore, 
$\Hom_{\Lie} (\h (A_C), \g)= \Hom_{\Lie}^1 (\h (A_C), \g)$, as claimed. 
\end{example}

\newcommand{\arxiv}[1]
{\texttt{\href{http://arxiv.org/abs/#1}{arxiv:#1}}}
\newcommand{\arx}[1]
{\texttt{\href{http://arxiv.org/abs/#1}{arXiv:}}
\texttt{\href{http://arxiv.org/abs/#1}{#1}}}
\newcommand{\doi}[1]
{\texttt{\href{http://dx.doi.org/#1}{doi:#1}}}

\end{document}